\documentclass[12pt,reqno]{amsart} 

\usepackage{amssymb,bbm,marginnote,url}
\usepackage{aliascnt}
\usepackage{doi}
\usepackage{pgfplots}

\usepackage[super]{nth}

\usepackage{psfrag}

\usepackage{color}

\newcommand\blue[1]{\textcolor{blue}{#1}}

\newtheorem{theorem}{Theorem}%[section]

\newaliascnt{lemma}{theorem}
\newtheorem{lemma}[lemma]{Lemma}
\aliascntresetthe{lemma}

\newaliascnt{proposition}{theorem}
\newtheorem{proposition}[proposition]{Proposition}
\aliascntresetthe{proposition}

\newaliascnt{corollary}{theorem}
\newtheorem{corollary}[corollary]{Corollary}
\aliascntresetthe{corollary}

\newaliascnt{conjecture}{theorem}

\aliascntresetthe{conjecture}

 % "letter-numbered" conjectures

\newaliascnt{openQ}{theorem}

\aliascntresetthe{openQ}

\newaliascnt{quest}{theorem}

\aliascntresetthe{quest}

\newaliascnt{questx}{conjx}

\aliascntresetthe{questx}

\theoremstyle{definition}

\newaliascnt{defn}{theorem}

\aliascntresetthe{defn}

\newaliascnt{example}{theorem}

\aliascntresetthe{example}

\newaliascnt{rem}{theorem}

\aliascntresetthe{rem}

\makeatletter
\def\tagform@#1{\maketag@@@{\ignorespaces#1\unskip\@@italiccorr}}
\let\orgtheequation\theequation
\def\theequation{(\orgtheequation)}
\makeatother
\def\equationautorefname~{}
% end autoref setup

%
%
%
%
\renewcommand{\Im}{\operatorname{Im}}

\newcommand{\D}{\mathbb{D}}
\newcommand{\CD}{\overline{\mathbb{D}}}

\newcommand{\e}{\varepsilon}
\newcommand{\R}{{\mathbb R}}
\newcommand{\Rn}{{\mathbb R}^n}

\newcommand{\Sp}{{\mathbb S}}
\newcommand{\C}{{\mathbb C}}
\newcommand{\HH}{{\mathbb H}}

\let\oldmarginnote\marginnote
\renewcommand{\marginnote}[1]{\oldmarginnote{\tiny \blue{#1}}}

\begin{document}
	\title{Robin spectrum: two disks maximize the third eigenvalue}

\keywords{Robin, Neumann, Steklov, vibrating membrane, conformal mapping}
\subjclass[2010]{\text{Primary 35P15. Secondary 30C70}}

	\begin{abstract}
		The third eigenvalue of the Robin Laplacian on a simply-connected planar domain of given area is bounded above by the third eigenvalue of a disjoint union of two disks, provided the Robin parameter lies in a certain range and is scaled in each case by the length of the boundary. Equality is achieved when the domain degenerates suitably to the two disks.
	\end{abstract}
\author[]{A. Girouard and R. S. Laugesen}
\address{Department de Math\'ematiques et Statistique, Univ.\ Laval, Quebec, Qc, Canada}
\email{Alexandre.Girouard@mat.ulaval.ca}
\address{Department of Mathematics, University of Illinois, Urbana,
	IL 61801, U.S.A.}
\email{Laugesen\@@illinois.edu}

	\maketitle
	
%	\tableofcontents
	
	\section{\bf Introduction} \label{sec:intro}
	
What shape of drum-head gives the largest second overtone? The shape optimization problem is to maximize the third eigenvalue of the Laplacian under suitable geometric constraints and boundary conditions. 

First we formulate the problem, and then state the sharp upper bound on the  eigenvalue. For a bounded domain $\Omega\subset\R^2$ with Lipschitz boundary, the Robin eigenvalue problem with parameter $\alpha\in\R$ is to find all numbers $\lambda\in\R$ for which a nonzero function $u:\overline{\Omega}\rightarrow\R$ exists satisfying
\[
\begin{split}
\Delta u + \lambda u & = 0 \quad \text{in $\Omega$,} \\
\partial_\nu u + \alpha u & = 0 \quad \text{on $\partial \Omega$,} 
\end{split}
\]
	where $\partial_\nu u$ is the normal derivative of $u$ in the outward direction.
The eigenvalues form an unbounded sequence
	$$
	\lambda_1(\Omega;\alpha)\leq\lambda_2(\Omega;\alpha)\leq\lambda_3(\Omega;\alpha)\leq\cdots\nearrow\infty,
	$$
	where each one is repeated according to its multiplicity.
	The corresponding Rayleigh quotient is
	$$Q_\alpha(u) = \frac{\int_{\Omega}|\nabla u|^2\,dA+\alpha\int_{\partial\Omega}|u|^2\,ds}{\int_{\Omega}|u|^2\,dA}, \qquad u \in H^1(\Omega;\C) .$$
From the Rayleigh quotient, the spectrum is easily seen to be scale invariant when the eigenvalues are normalized by area and the Robin parameter is scaled by boundary length; that is, 
\[
\text{$\lambda_j(\Omega;\alpha/L)A$ \hspace*{4pt} is scale invariant}
\]
where $L=$length of $\partial \Omega$ and $A=$area of $\Omega$.
Scale invariance means the expression takes the same value for all dilations of $\Omega$.
	
The normalized first eigenvalue $\lambda_1(\Omega; \alpha/L)A$ is maximal for a degenerate rectangle whenever $\alpha \in \R$. The second eigenvalue $\lambda_2(\Omega; \alpha/L)A$ is maximal among simply-connected domains for the disk whenever $\alpha\in [-2\pi,2\pi]$, as shown by Freitas and Laugesen \cite[Theorems A,B]{FreiLauS2018}.

This paper proves an optimal upper bound on the normalized third eigenvalue $\lambda_3(\Omega;\alpha/L)A$ among simply-connected planar domains. The upper bound is attained in a suitable limit of simply-connected domains  degenerating to a disjoint union of two disks.
	
%The spectrum of $\Omega^{\star\star}$ equals the union of the spectra of the individual disks.
\begin{theorem}[Third Robin eigenvalue is maximal for the double disk] \label{thm:main}
Fix $\alpha\in[-4\pi,0]$.
If $\Omega\subset\R^2$ is a simply-connected bounded Lipschitz domain whose boundary is a Jordan curve then
\begin{gather}\label{ineq:main}
  \lambda_3(\Omega;\alpha/L) A <\lambda_3(\D\sqcup\D;\alpha/4\pi) 2\pi.
\end{gather}
Furthermore, equality is attained asymptotically for the domain $\Omega_\e = (\D - 1 + \e)\cup (\D + 1 - \e)$ that approaches a double disk as $\e \to 0$.
\end{theorem}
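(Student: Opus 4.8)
The plan is to follow the conformal-mapping and renormalization strategy behind the Neumann case $\alpha=0$ (Girouard--Nadirashvili--Polterovich), while controlling the Robin boundary term in the spirit of Freitas--Laugesen \cite{FreiLauS2018}. Since $\partial\Omega$ is a Jordan curve, Carath\'eodory's theorem furnishes a conformal homeomorphism $\phi\colon\overline{\D}\to\overline\Omega$; conformal invariance of the Dirichlet integral in the plane then identifies $\lambda_3(\Omega;\alpha/L)$ with the third eigenvalue of the weighted Rayleigh quotient
\[
R(g)=\frac{\int_\D|\nabla g|^2\,dA+\tfrac{\alpha}{L}\int_{\partial\D}|g|^2\,|\phi'|\,ds}{\int_\D|g|^2\,|\phi'|^2\,dA},\qquad \int_\D|\phi'|^2\,dA=A,\ \ \int_{\partial\D}|\phi'|\,ds=L .
\]
So it suffices to build a three-dimensional trial space in $H^1(\D)$ on which $R\le\lambda_3(\D\sqcup\D;\alpha/4\pi)\,2\pi/A$.

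For the trial functions I would take the first (radial) Robin eigenfunction of the unit disk together with the two second Robin eigenfunctions of $\cos\theta$- and $\sin\theta$-type, composed with a M\"obius transformation $T$ of $\D$ still to be chosen; note $\phi\circ T^{-1}$ is again a conformal map onto $\Omega$. The purpose of $T$ is twofold: to make the three weighted quadratic forms in $R$ simultaneously diagonal, so that the Rayleigh quotient on the span is just the largest of the three individual quotients, and to balance the pulled-back boundary measure $|\phi'|\,ds$. At this point a dichotomy appears, exactly as in the Neumann argument: either the required balancing can be achieved by a genuine M\"obius transformation, in which case the estimate closes against a \emph{single} disk and the single-disk value is strictly below the two-disk value; or the balancing fails, which forces the boundary measure, after renormalization, to concentrate near two antipodal points of $\partial\D$. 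In the second regime the weighted problem decouples, in the limit, into two copies of a disk problem whose boundary lengths add up to $L\to4\pi$; this is where the $4\pi$ normalization of the Robin parameter on the right-hand side comes from.

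The hypothesis $\alpha\in[-4\pi,0]$ enters when one estimates the weighted numerators against the disk eigenvalue after rearranging the conformal density $|\phi'|^2$: one needs the radial Robin eigenfunction profiles of the disk to have the right sign and monotonicity, which in Bessel-function terms is a constraint on the Robin parameter and on the branch of the relevant zero. I expect $\alpha\le0$ to be what makes the first radial profile nondecreasing, so that the rearrangement is favorable, and $\alpha\ge-4\pi$ to keep the comparison eigenvalue $\lambda_3(\D\sqcup\D;\alpha/4\pi)=\lambda_2(\D;\alpha/4\pi)$ on the correct branch. Strictness of \eqref{ineq:main} is then a byproduct: equality in the rearrangement step would force $|\phi'|^2$ to be constant, i.e.\ $\Omega$ a disk, and for a disk $\lambda_3$ is strictly below the two-disk value, so no simply-connected Jordan domain can be extremal. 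For the asymptotic sharpness I would work directly with $\Omega_\e=(\D-1+\e)\cup(\D+1-\e)$: as $\e\to0$ the connecting neck pinches, its harmonic capacity and the extra length of $\partial\Omega_\e$ along it both tend to $0$, while $A\to2\pi$ and $L\to4\pi$, and either an explicit test-function computation or a Robin eigenvalue continuity statement for this controlled degeneration yields $\lambda_3(\Omega_\e;\alpha/L)A\to\lambda_3(\D\sqcup\D;\alpha/4\pi)\,2\pi$.

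The step I expect to be the main obstacle is the degenerate branch of the dichotomy: making the decoupling into two disk problems quantitative, keeping careful track of the Robin boundary contribution on the collapsing neck, and checking that the effective parameter in the limit is exactly $\alpha/4\pi$ rather than some other multiple, all require care, and it is precisely there that the constraint $\alpha\in[-4\pi,0]$ is consumed. A secondary difficulty is arranging the M\"obius balancing for a \emph{three}-dimensional trial space, rather than the two-dimensional one that suffices for $\lambda_2$, which is exactly what makes the two-disk configuration, and not the single disk, the extremal shape.
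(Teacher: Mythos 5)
Your sketch correctly identifies the broad framework (conformal transplantation, Hersch--Szeg\H{o} renormalization via M\"obius maps, controlling the Robin boundary term, and asymptotic sharpness for $\Omega_\e$), but the central technical device that makes the argument close against the \emph{two}-disk value is missing, and the intermediate steps diverge from what actually works.

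The key mechanism in the paper is not a three-dimensional trial space containing the radial first eigenfunction, nor a dichotomy between ``M\"obius balancing succeeds'' and ``boundary measure concentrates at antipodal points.'' Instead, one fixes a single complex-valued trial function $u = v\circ M_w\circ G_C\circ F_C\circ B$, where $v=g(r)e^{i\theta}$ is the second Robin eigenfunction of $\D$, $B:\Omega\to\D$ is a normalized conformal map, and --- crucially --- $F_C$ is a two-to-one \emph{fold map} across a hyperbolic geodesic onto a cap $C$, followed by a conformal map $G_C$ from that cap back onto $\D$. The fold is what produces $\int_\Omega|\nabla u|^2\,dA = 2\int_\D|\nabla v|^2\,dA$, and it is precisely this factor of two (together with a Szeg\H{o}-type area comparison giving $2\int_\D|v|^2\,dA<\int_\Omega|u|^2\,dA$) that makes the two-disk value appear. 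Orthogonality to $f_1$ is arranged by the Hersch--Szeg\H{o} choice of $w$, and orthogonality to $f_2$ by a genuinely topological argument: as the cap ranges over a cylinder $S^1\times(-1,1)$, the resulting loop $p\mapsto\int_\Omega u_{p,t}f_2\,dA$ changes winding number between $t\to 1$ and $t\to -1$, forcing a zero. Your proposed simultaneous-diagonalization of three quadratic forms by a single M\"obius map is not what occurs, and I do not see how it could be made to yield the doubling.

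Two further points. The role of $\alpha\in[-4\pi,0]$ is not a ``branch of a Bessel zero'' or a neck-decoupling phenomenon: $\alpha\le 0$ ensures $g$ is strictly increasing, which is needed both for the uniqueness/continuity of the renormalizing point $w(C)$ (hence for the topological argument) and for the Szeg\H{o}-type comparison, while $\alpha\ge -4\pi$ is used at the end to ensure $\lambda_2(\D;\alpha/4\pi)\ge 0$ so that the area comparison can be multiplied through (with a separate argument at $\alpha=-4\pi$ where this eigenvalue vanishes). Finally, for the saturation step, ``an explicit test-function computation'' on $\Omega_\e$ would give an \emph{upper} bound for $\lambda_3(\Omega_\e)$, which is the wrong direction; what is needed is the lower bound $\liminf_\e\lambda_3(\Omega_\e;\alpha/L(\e))\ge\lambda_3(\Omega_0;\alpha/4\pi)$. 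The paper obtains this by a ``pulling apart with a weight'' map $T_\e:H^1(\Omega_\e)\to H^1(\Omega_0)$ that transfers trial subspaces to a weighted Robin problem on the double disk and then lets the weights tend to constants; a continuity-of-eigenvalues statement must be formulated and proved for that specific weighted degeneration.
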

The third eigenvalue of the disjoint union $\D\sqcup\D$ is simply the second eigenvalue of one of the disks, and so the theorem says $\lambda_3(\Omega;\alpha/L) A < \lambda_2(\D;\alpha/4\pi) 2\pi$. This disk eigenvalue can be computed explicitly in terms of Bessel functions, as explained in \autoref{sec:diskproblem}. 

To rephrase conclusion \eqref{ineq:main} another way, write $\Omega^{\star\star}$ for the union of two disjoint disks each having half the area of $\Omega$. Then by scale invariance, the inequality is equivalent to
  $$\lambda_3(\Omega;\alpha/L(\Omega))<\lambda_3(\Omega^{\star\star};\alpha/L(\Omega^{\star\star})).$$

The Neumann case of the theorem ($\alpha=0$) is a result of Girouard, Nadirashvili and Polterovich \cite{GirNadPolt2009}. Their result was generalized by Bucur and Henrot \cite{BucurHenrot2019} to arbitrary domains in higher dimensions.

We do not know whether the range $\alpha \in [-4\pi,0]$ in the theorem can be enlarged. The proof holds unchanged when $\alpha \in (0,4\pi]$ except the uniqueness and continuous dependence proof for the normalizing point $w$ in \autoref{centerofmass} breaks down because the excited Robin state has nonmonotonic radial part when $\alpha > 0$; hence the trial function orthogonality in \autoref{propo:existenceCap} is not known when $\alpha>0$. 
Note the theorem definitely fails in the Dirichlet limit $\alpha\to\infty$, since the Dirichlet eigenvalues of domains of given area can be made arbitrarily large by taking long, thin domains.

Perimeter scaling on the Robin parameter is essential to \autoref{thm:main}. Without it, the double disk is not the maximizer for $\lambda_3(\Omega;\alpha)A$ when $\alpha<0$, according to numerical work by Antunes, Freitas and Krej\v{c}i\v{r}\'{\i}k \cite[Figure 4]{AntunesFreitasKrejcirik2017}. 

It is an open problem to extend \autoref{thm:main} to higher dimensions. Indeed, it is already an open problem to extend Freitas and Laugesen's result on the second eigenvalue $\lambda_2(\Omega; \alpha/L)A$. Conformal mappings as used in their paper and this one are not available in higher dimensions, and so a different kind of proof would be be needed.

\autoref{thm:main} implies a sharp upper bound on the second positive Steklov eigenvalue.
Write $0=\sigma_0 < \sigma_1 \leq \sigma_2 \leq \dots$ for the Steklov eigenvalues of $\Omega\subset\R^2$, which correspond to the eigenvalue problem
\[
\begin{split}
\Delta u & = 0 \ \ \quad \text{in $\Omega$,} \\
\partial_\nu u & = \sigma u \quad \text{on $\partial \Omega$.} 
\end{split}
\]
Notice the product $\sigma_j L$ is scale invariant.
	\begin{corollary}[Sharp bound on the second nonzero Steklov eigenvalue] \label{cor:steklov}
	  If $\Omega\subset\R^2$ is a simply-connected bounded Lipschitz domain whose boundary is a Jordan curve then $$\sigma_2(\Omega) L(\Omega)<4\pi .$$ Equality is attained asymptotically for $\Omega_\e = (\D - 1 + \e)\cup (\D + 1 - \e)$ as $\e \to 0$.
	\end{corollary}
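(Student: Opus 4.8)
The plan is to read off \autoref{cor:steklov} from \autoref{thm:main} at the single parameter value $\alpha=-4\pi$, using the elementary correspondence between Steklov eigenvalues and the zeros of the Robin eigenvalue branches. The point is that a number $\sigma$ is a Steklov eigenvalue of $\Omega$ with eigenfunction $u$ precisely when $u$ is harmonic and $\partial_\nu u-\sigma u=0$ on $\partial\Omega$, which is exactly the statement that $\lambda=0$ is a Robin eigenvalue of $\Omega$ with parameter $\alpha=-\sigma$ and eigenfunction $u$. Matching indices through the Rayleigh quotients --- the constant function gives $\sigma_0=0\leftrightarrow\lambda_1(\Omega;0)=0$, and in general $\sigma_{j-1}(\Omega)\leftrightarrow\lambda_j$ --- yields
\[
  \lambda_3\bigl(\Omega;-\sigma_2(\Omega)\bigr)=0 .
\]
Also, for each fixed trial function the Rayleigh quotient $Q_\alpha(u)$ is non-decreasing in $\alpha$ because $\int_{\partial\Omega}|u|^2\,ds\ge0$; hence by min--max $\alpha\mapsto\lambda_3(\Omega;\alpha)$ is non-decreasing.

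Next I would evaluate the right-hand side of \eqref{ineq:main} at $\alpha=-4\pi$. Since the third eigenvalue of $\D\sqcup\D$ equals the second eigenvalue of a single unit disk, and $-1$ is exactly the first nonzero Steklov eigenvalue of $\D$ (realized by the coordinate functions $x$ and $y$, which are harmonic with $\partial_\nu u=u$ on the unit circle), we get $\lambda_3(\D\sqcup\D;-1)=\lambda_2(\D;-1)=0$. Therefore \autoref{thm:main} at $\alpha=-4\pi$ asserts $\lambda_3(\Omega;-4\pi/L)\,A<0$, so $\lambda_3(\Omega;-4\pi/L)<0$. Comparing this with $\lambda_3(\Omega;-\sigma_2(\Omega))=0$ and invoking monotonicity in the Robin parameter forces $-4\pi/L<-\sigma_2(\Omega)$; that is, $\sigma_2(\Omega)\,L(\Omega)<4\pi$.

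For the asymptotic statement I would check that $\sigma_2(\Omega_\e)\,L(\Omega_\e)\to4\pi$ as $\e\to0$. The total boundary length satisfies $L(\Omega_\e)\to4\pi$ (the two overlapping unit circles lose only arcs of length $O(\sqrt{\e})$ to each other), and as the overlapping disks pinch to a pair of disks tangent at the origin one has $\sigma_2(\Omega_\e)\to\sigma_2(\D\sqcup\D)=1$, the single limiting contact point being negligible for the $H^1$ Rayleigh quotient. This convergence can be obtained by a direct continuity argument on the explicit family $\Omega_\e$, or extracted from the asymptotic analysis already performed for \autoref{thm:main}. Combined with the strict inequality just proved, this shows $4\pi$ is the sharp but unattained constant.

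I expect the only genuinely technical point --- and thus the main obstacle --- to be making the Robin--Steklov index correspondence watertight: one must confirm that at $\alpha=-\sigma_2(\Omega)$ there are exactly two negative Robin eigenvalues, equivalently exactly two Steklov eigenvalues strictly below $\sigma_2$ (counting $\sigma_0=0$). In the min--max this requires handling trial functions whose trace on $\partial\Omega$ vanishes, for which the Steklov Rayleigh quotient $\int_\Omega|\nabla u|^2\big/\int_{\partial\Omega}|u|^2$ is infinite; such functions contribute only positive Robin eigenvalues and so do not affect the count. The asymptotic half is comparatively soft, reducing to continuity of $\sigma_2$ and of perimeter along $\Omega_\e$.
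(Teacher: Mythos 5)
Your proof of the strict inequality is essentially the paper's, just with the logical order reshuffled. The paper first finds, via continuity in $\alpha$ and the fact that each branch $\lambda_j(\Omega;\cdot/L)$ can cross zero at most once, a value $\alpha_3\in(-4\pi,0)$ with $\lambda_3(\Omega;\alpha_3/L)=0$, deducing $-4\pi<\alpha_3<0$ from $\lambda_3(\Omega;-4\pi/L)<0<\lambda_3(\Omega;0)$; it then identifies $\sigma_2(\Omega)=-\alpha_3/L$ via the Robin--Steklov correspondence. You assert $\lambda_3(\Omega;-\sigma_2)=0$ up front and then squeeze using monotonicity in $\alpha$ together with the $\alpha=-4\pi$ instance of the main theorem. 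Both arguments rest on the same index-matching between Robin zero-crossings and Steklov eigenvalues, which you correctly flag as the crux; the paper is terser on that point, but the fact is standard, and your caveat about trial functions with vanishing boundary trace is the right thing to note.

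The genuine gap is in your treatment of asymptotic sharpness. You assert $\sigma_2(\Omega_\e)\to\sigma_2(\D\sqcup\D)=1$ ``by a direct continuity argument,'' but Steklov spectral convergence along a degenerating dumbbell-type family is not automatic --- that is precisely the nontrivial analysis in Girouard--Polterovich \cite{GirPolt2010}, which the paper cites as already known. The paper instead derives the Steklov asymptotics as a clean consequence of the Robin asymptotic sharpness: for each fixed $\alpha\in(-4\pi,0]$, the equality statement in Theorem 1.1 gives $\lambda_3(\Omega_\e;\alpha/L(\e))A(\e)\to\lambda_2(\D;\alpha/4\pi)\,2\pi>0$, so $\lambda_3(\Omega_\e;\alpha/L(\e))>0$ for small $\e$, forcing the zero-crossing $\alpha_3(\e)<\alpha$; combined with $\alpha_3(\e)>-4\pi$ from the strict inequality, this squeezes $\alpha_3(\e)\to-4\pi$, i.e.\ $\sigma_2(\Omega_\e)L(\Omega_\e)\to4\pi$. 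You mention this route as a backup, but your primary plan would require redoing, or at least carefully citing, a Steklov convergence argument that the paper elegantly sidesteps by keeping $\alpha$ fixed and letting monotonicity do the work.
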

The non-strict inequality $\sigma_2 L \leq 4\pi$ was proved directly by Hersch, Payne and Schiffer \cite{HPS1975}. They further found $\sigma_k L \leq 2\pi k$ for each $k$. Strict inequality was obtained for $k=2$ by Girouard and Polterovich \cite{GirPolt2010}, who established asymptotic sharpness as the domain degenerates suitably to a union of $2$ disjoint disks. 

Might our \autoref{thm:main} for the third Robin eigenvalue extend to the $k$-th Robin eigenvalue being maximal at the union of $k$ disjoint disks, for all $k \geq 2$ and appropriate values of $\alpha$?
Any such generalization will not be straightforward, because when $k=4$ the conjecture fails already at $\alpha=0$ by numerical work of Antunes and Freitas \cite[Figure 1]{AntunesFreitas2012}. Their computations reveal that the fourth Neumann eigenvalue (the third nonzero one) seems to be maximal not for the union of three disjoint disks but for something close to a 3-fold rotationally symmetric overlapping union of three disks.

\subsection*{What is new in this paper?}
The strategy of the present paper is to combine conformal techniques in $2$ dimensions from Girouard, Nadirashvili and Polterovich \cite{GirNadPolt2009}, and particularly their parameterized family of hyperbolic caps, with trial function insights from Bucur and Henrot \cite{BucurHenrot2019}. Both these papers are concerned with maximizing the third Neumann eigenvalue. For the third Robin eigenvalue we must additionally handle a boundary term in the Rayleigh quotient, and so we incorporate the perimeter-scaling ideas of Freitas and Laugesen \cite{FreiLauS2018} from their work maximizing the second Robin eigenvalue.

The current paper provides certain simplifications in comparison to \cite{GirNadPolt2009}, even for the original case of Neumann eigenvalues. Rather than finding a 2-dimensional space of real-valued trial functions that satisfy one orthogonality condition and an additional ``inertia relation'', as in that paper, here we find a single complex-valued trial function satisfying two orthogonality conditions.
Also, the topological argument is simpler than in \cite{GirNadPolt2009}. We hope these improvements make it easier to generalize the approach to other situations. 
		
Finally, the ``pulling apart with a weight'' argument in \autoref{sec:saturation} by which we prove saturation in the main theorem is different and simpler than earlier approaches in the Neumann and Steklov cases for approaching the disjoint union of disks. 

\subsection*{Literature on upper bounds for eigenvalues of the Laplacian}
The question of maximizing individual eigenvalues of the Laplacian goes back at least to work of Szeg\H{o} \cite{Szego1954}. He proved that among simply-connected planar domains of prescribed area, the first nonzero  Neumann eigenvalue $\mu_2(\Omega)$ is largest for the disk, and only the disk. Weinberger \cite{Weinberger1956} generalized the result to all domains in all dimensions. 
Weinstock \cite{Weinstock1954} soon discovered a modification of Szeg\H{o}'s argument that led to the sharp upper bound $\sigma_1 L \leq 2\pi$ on the first nonzero eigenvalue of the Steklov problem, this time under perimeter constraint. The disk is again the unique maximizer. These Neumann and Steklov results were recently extended to the  Robin Laplacian by Freitas and Laugesen \cite{FreiLauW2018,FreiLauS2018}, who showed the ball maximizes the second eigenvalue $\lambda_2(\Omega;\alpha)$ when $\alpha$ lies in a certain range and the volume is fixed.

In the context of surfaces without boundary, similar maximization problems were taken up by Hersch. Given a compact smooth surface $S$ equiped with a Riemannian metric $g$, the Laplace--Beltrami operator $\Delta_g$ has discrete unbounded spectrum $0=\lambda_1(S,g)\leq\lambda_2(S,g)\leq\cdots\to+\infty$. Hersch \cite{Hersch1970} proved that for all Riemannian metrics on the sphere $\Sp^2$ with area equal to $4\pi$, the eigenvalue $\lambda_2(\Sp^2,g)$ is less than or equal to $2$, with equality holding when $g$ is the standard ``round'' metric induced from the embedding of the sphere into $\R^3$. On an arbitrary compact orientable surface $S$, Yang and Yau \cite{YangYau} used a conformal branched covering $S\to\Sp^2$ to bound $\lambda_2(S,g)$ in terms of the genus $\gamma\geq 0$ and area $A$ of the surface. Their bound was improved by El Soufi and Ilias \cite{ElSoufiIlias} to 
\begin{gather}\label{ineq:YangYau}
  \lambda_2(S,g)A\leq 8\pi\left\lfloor\frac{\gamma+3}{2}\right\rfloor.
\end{gather}
When $\gamma=0$, one recovers the above sharp result of Hersch for the sphere. Inequality \eqref{ineq:YangYau} is also sharp for $\gamma=2$ by work of Nayatani and Shoda~\cite{NayataniShoda}, who solved a conjecture from~\cite{JLNNP2005}, but the inequality is strict and not sharp for all values of $\gamma\notin\{0,2\}$, according to recent work of Karpukhin \cite{KarpukhinYangYau}. The sharp upper bound $\lambda_2(\mathbb{T}^2,g)A\leq 8\pi^2/\sqrt{3}$ for metrics on the torus was determined by Nadirashvili \cite{NadTorus}. In the non-orientable case, sharp upper bounds for $\lambda_2(S,g)$ are known for the projective plane \cite{LiYau} and Klein bottle \cite{CianciKarpukhinMedvedev,ElSoufiGiacominiJazar}, in the latter case proving a conjecture by Jakobson, Nadirashvili and Polterovich~\cite{JakNadPoltKlein}.
%
% for the projective plane (Li and Yau~\cite{LiYau}), and for the Klein bottle following a conjecture by Jakobson, Nadirashvili and Polterovich~\cite{JakNadPoltKlein} which was proved under an existence assumption by El Soufi, Giacomini and Jazar~\cite{ElSoufiGiacominiJazar} which was recently proved by Cianci, Karpukhin and Medvedev~\cite{CianciKarpukhinMedvedev}.

Sharp bounds for higher eigenvalues are significantly more difficult to obtain. Nadirashvili~\cite{NadSphere} proved for the sphere that $\lambda_3(\Sp^2,g)A\leq 16\pi$, and he conjectured $\lambda_k(\Sp^2,g)A\leq 8(k-1)\pi$ for all $k\geq 1$. This was proved by him and Sire~\cite{NadSire2017} for $k=4$, and recently for all $k\geq 1$ by Karpukhin, Nadirashvili, Penskoi and Polterovich~\cite{KNPP2019}. The paper~\cite{NadSphere}, while extremely difficult to understand, has been quite influential. In particular, it led to a sharp upper bound on the third Neumann eigenvalue $\mu_3(\Omega)$ among simply-connected planar domains of given area, obtained by Girouard, Nadirashvili and Polterovich \cite{GirNadPolt2009}. Their result was generalized by Bucur and Henrot \cite{BucurHenrot2019} to arbitrary domains in all dimensions. See also Petrides \cite{Petrides2014} for upper bounds on $\lambda_3$ on spheres of arbitrary dimensions.

Returning to the Robin problem on euclidean domains, we recommend a survey article by Bucur, Freitas and Kennedy \cite{BucFreiKen2017}, which provides a good overview of Robin spectral problems and results, including upper and lower bounds and asymptotics as $\alpha \to \pm \infty$. Many more open problems for Robin eigenvalues and their gaps and ratios are stated by Freitas and Laugesen \cite{FreiLauW2018,FreiLauS2018} and Laugesen \cite{Lau2019}.

\subsection*{Plan of the paper}
The next two sections gather tools for our constructions: M\"{o}bius transformations, hyperbolic caps, and conformal maps between those caps and the disk. Then we recall properties of the Robin eigenfunctions on the disk. Trial functions are constructed in \autoref{sec:trialfuncts}, where they are shown to be orthogonal to the first two Robin eigenfunctions on $\Omega$. Strict inequality for \autoref{thm:main} is proved in \autoref{sec:main}, and \autoref{sec:saturation} shows asymptotic sharpness for the union of two disks. The Steklov result \autoref{cor:steklov} is deduced in \autoref{sec:steklov}.

	\subsection*{Notation}
	The unit disk is $\D = \{ z\in\C : |z|<1 \}$, the upper halfplane is $\HH = \{ z\in\C : \Im z > 0 \}$, and the upper halfdisk is $\D^+ = \D \cap \HH$.
	
	The function spaces $L^2(\Omega;\C)$ and Sobolev $H^1(\Omega;\C)$ of complex valued functions will be used, although for the sake of brevity we will generally omit the $\C$ from the notation. 
	
	A conformal map is a conformal diffeomorphism, holomorphic in both directions.

\section{\bf M\"obius maps and hyperbolic caps}
	
Our estimation of $\lambda_3(\Omega;\alpha/L)$ in \autoref{thm:main} will rely on a variational characterization of the third eigenvalue as the minimum of the Rayleigh quotient taken over all trial functions orthogonal to the first two eigenfunctions:
	\begin{align}
	& \lambda_3(\Omega;\alpha/L) \label{eq:varchar} \\ 
	& =\min\left\{Q_{\alpha/L}(u)\,:\,u\in H^1(\Omega) \setminus \{ 0 \}, \int_{\Omega}uf_1\,dA=\int_{\Omega}uf_2\,dA=0 \right\} \notag
	\end{align}
	where the $f_j$ are $L^2$-orthonormal real-valued eigenfunctions corresponding to the eigenvalues $\lambda_j(\Omega;\alpha/L)$. Remember the trial function $u\in H^1(\Omega)$ may be complex-valued.
	
	We will construct a $4$-parameter family of complex-valued trial functions, in order to obtain enough degrees of freedom to get a trial function orthogonal to $f_1$ and $f_2$. Two parameters will come from a family of M\"{o}bius transformations of the disk, and two more from a family of hyperbolic caps inside the disk. 
	
\subsection*{M\"obius transformations}
Given $w \in \overline{\D}$, let
\[
M_w(z) = \frac{z+w}{z\overline{w}+1} , \qquad z \in \D .
\]
Notice that when $w\in\D$, the function $M_w$ is a M\"{o}bius self-map of the disk and its boundary circle, with $M_w(0)=w$ and $M_w(-w)=0$, and 
\[
M_{-w}=M_w^{-1} .
\]
A rotational conjugation or invariance property of these maps is that  
\begin{equation}
M_{pw} = p \circ M_w \circ p^{-1} , \qquad p \in S^1 , \label{eq:Mxiconj}
\end{equation}
as one sees by writing $p=e^{i\theta}$ and evaluating the right side at $z$ as $e^{i\theta} M_w(e^{-i\theta}z)$. Also, $M_{pt}$ fixes the points $\pm p$ since 
\[
M_{pt}(\pm p) = \pm p , \qquad t \in (-1,1) .
\]
When $w\in\partial\D$ the function $M_w$ is constant on the disk, with $M_w(z)=w$ for each $z\in\D$.

\subsection*{Hyperbolic caps }
Let $\gamma$ be a geodesic in the Poincar\'e disk model; that is, either a diameter or the intersection of the disk with a circle that is orthogonal to the boundary $S^1=\partial \D$. The closure in $\R^2$ of each connected component of $\D\setminus\gamma$ is called a hyperbolic cap, as shown in \autoref{fig:capbasic}. The geodesic $\gamma$ is contained in both caps. Its endpoints are called $a$ and $b$. 
\begin{figure}
  \includegraphics[scale=0.15]{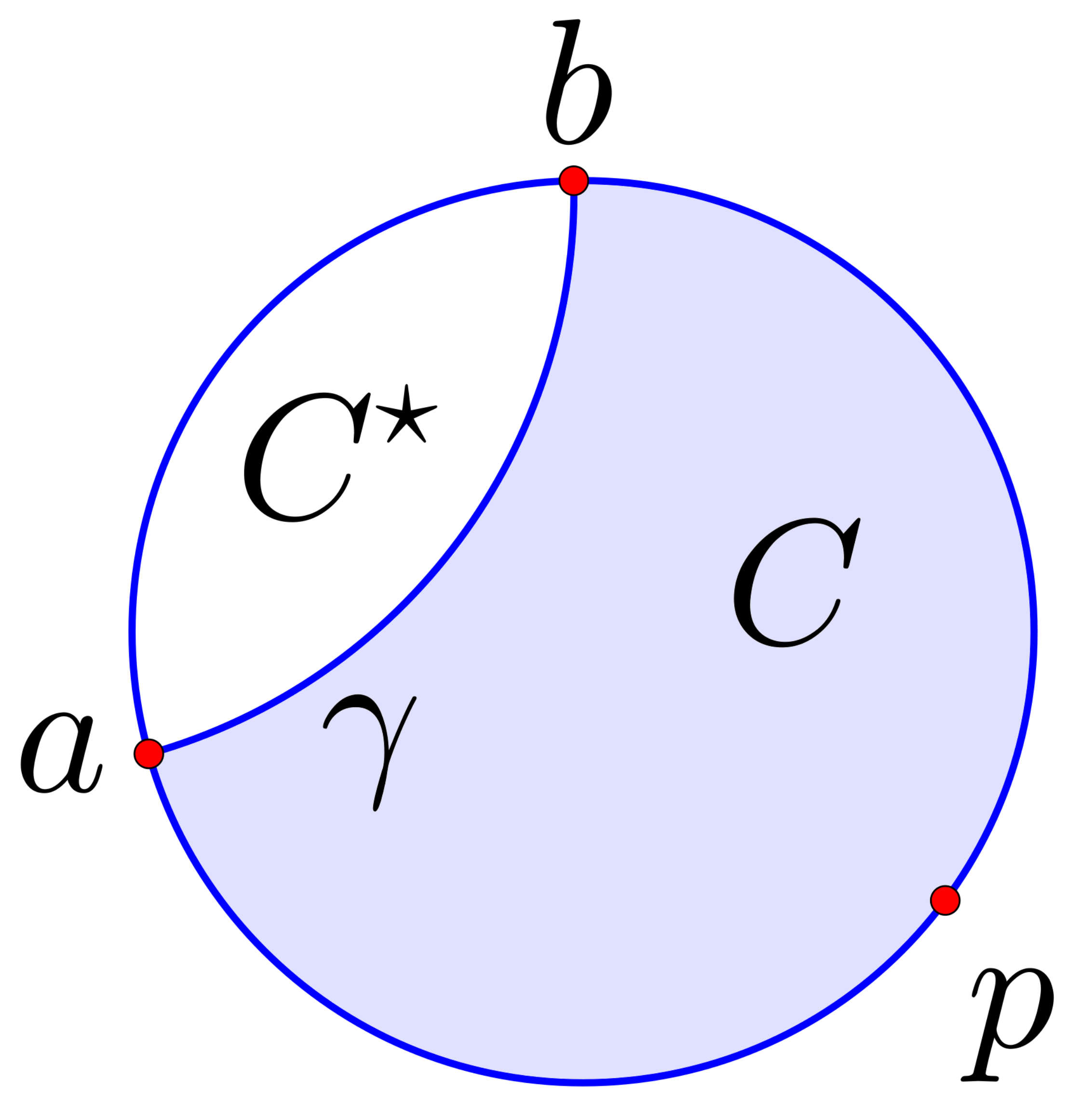}
%\caption{The half-disk $C_{p,0}$ in white on the left is mapped to $C_{p,r}$ in white on the right}
\caption{\label{fig:capbasic} The hyperbolic caps $C$ and $C^\star$ are the closures of the connected components of $\CD\setminus\gamma$, where $\gamma$ is a geodesic in the Poincar\'e disk model.}
\end{figure}

We want to parameterize the family of caps. The ordered endpoints $a,b \in S^1$ provide a parameterization, and so the family is clearly $2$-dimensional. It turns out to be more convenient to parameterize using the ``center'' $p$ and ``size'' $t$ of the cap $C$, as follows. 

For each point $p\in S^1$, let $C_{p,0}$ be the half-disk ``centered'' at $p$:
$$C_{p,0}=\{z\in\CD\,:\,z\cdot p\geq 0\},$$
where $z$ and $p$ are regarded in this definition as vectors in $\R^2$.
For $t\in (-1,1)$, define
the hyperbolic cap $C=C_{p,t}\subset\CD$ by
\begin{equation} \label{eq:capdef}
C_{p,t}=M_{-pt}(C_{p,0}),
\end{equation}
 as illustrated in \autoref{fig:capdef}. 
The definition is consistent when $t=0$, since $M_0$ is the identity map. The caps are related rotationally in a natural way, according to 
\[
C_{p,t} = p C_{1,t} ,
\]
which is obvious for $t=0$ (half-disks) and can be checked for $t \neq 0$ using the definition of $C_{p,t}$ and the rotational invariance in \eqref{eq:Mxiconj}. The complementary cap is $C^\star=C_{-p,-t}$.  
\begin{figure}
  \includegraphics[scale=0.15]{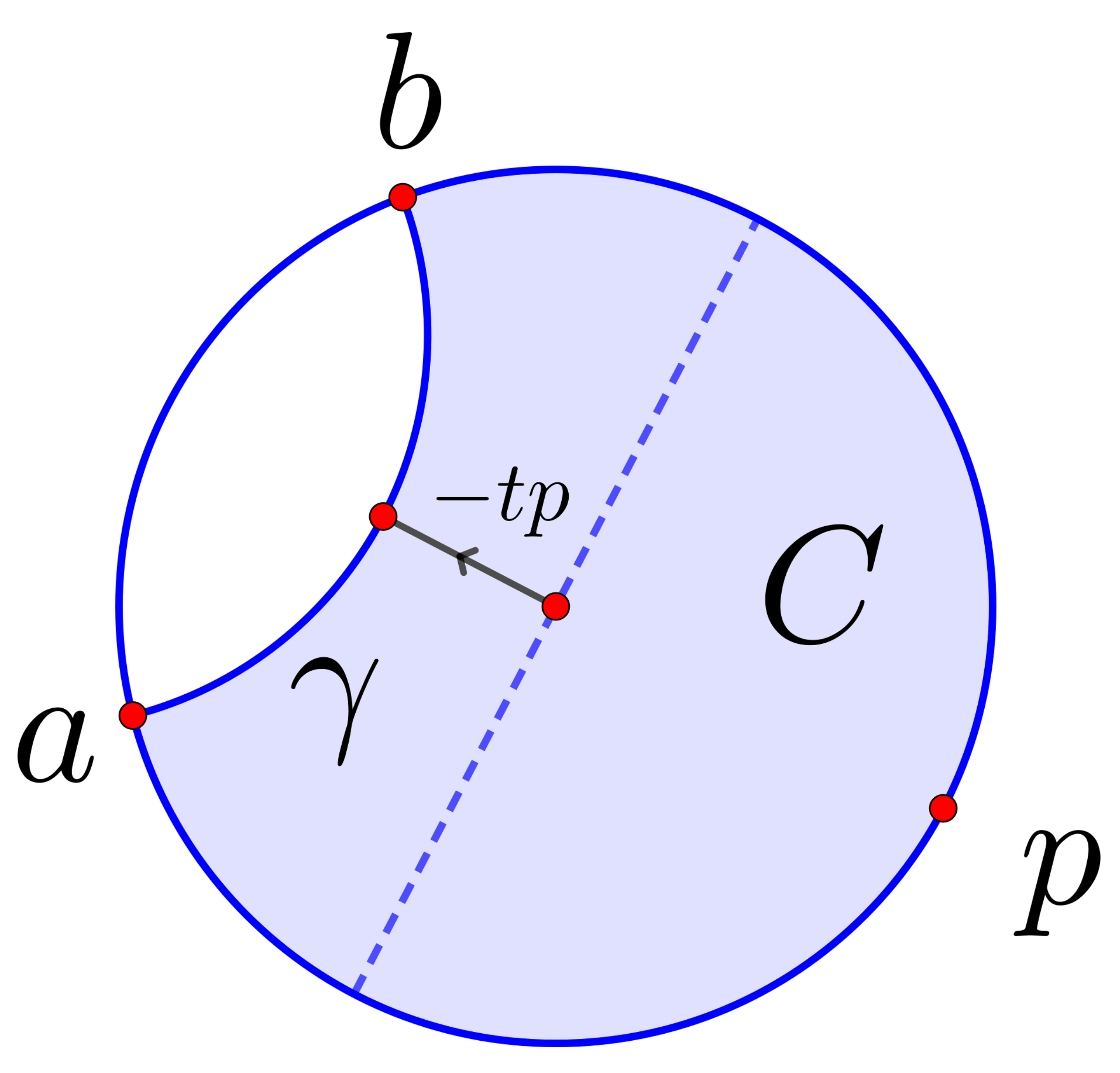}
  \caption{\label{fig:capdef} The hyperbolic cap $C=C_{p,t}$ is the image of the half-disk $C_{p,0}$ under the M\"obius transform $M_{-pt}$. Positive $t$ values correspond to caps larger than a half-disk, as shown here.}
\end{figure}

Importantly for our later work, the cap $C_{p,t}$ expands to the full disk as $t\to 1$ and collapses toward the point $p$ as $t\to -1$. 

Define $R_p:\C\to\C$ to be reflection across the line through the origin that is perpendicular to $p$, so that
\begin{equation*}\label{eq:defRp}
R_p(z)=-p^2\bar{z} .
\end{equation*}
This reflection conjugates nicely under rotation, with
\begin{equation*} \label{eq:defRpconj}
R_p = p \circ R_1 \circ p^{-1} ,
\end{equation*}
and it conjugates the M\"{o}bius transformation according to
\begin{equation} \label{eq:Mobiusconj}
M_{pt} = R_p \circ M_{-pt} \circ R_p .
\end{equation}

Lastly, the hyperbolic reflection $\tau_C=\tau_{p,t}:\CD\rightarrow\CD$ associated with $C=C_{p,t}$ is defined by pulling back to the half-disk, reflecting, and then pushing out again:
\begin{equation} \label{eq:taudef}
\tau_{p,t}=M_{-pt}\circ R_p\circ M_{pt}.
\end{equation}
Clearly $\tau_C$ maps $C$ to $C^\star$, and vice versa, fixing the geodesic $\gamma$ inbetween. The hyperbolic reflection conjugates naturally under rotations, with 
\begin{equation*}\label{eq:rotationrelation}
\tau_{p,t} = p \circ \tau_{1,t} \circ p^{-1} ,
\end{equation*}
as one can check using the conjugation \eqref{eq:Mxiconj} for the M\"{o}bius map. Further, 
\begin{equation} \label{eq:tworot}
M_{-pt} = \tau_{p,t} \circ R_p \circ M_{pt} 
\end{equation}
by substituting \eqref{eq:Mobiusconj} into the right side of \eqref{eq:taudef}.

\section{\bf Conformal cap maps}\label{section:familyconfomap}
The next stage in constructing trial functions is to map each hyperbolic cap conformally to the whole disk. It is more convenient to map in the reverse direction, by describing maps $K_C = K_{p,t}:\D\rightarrow C_{p,t}$. Our goal is to  evaluate the limits of these maps for large and small caps, that is, as $t \to \pm 1$. 

\begin{proposition}[$0 \leq t < 1$]\label{lemma:disktobigcaps}
A family $K_{p,t}:\D \rightarrow C_{p,t}$ of conformal maps exists for $(p,t)\in S^1\times [0,1)$ such that as $t \to 1$  and $p \to q \in S^1$ one has 
    \[
    \text{$K_{p,t} \to \text{id.}$ \ locally uniformly on $\D$.}
    \]
\end{proposition}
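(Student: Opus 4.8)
The plan is to construct the conformal maps $K_{p,t}$ explicitly by composing three pieces: a standard fixed conformal map from the disk onto the ``reference'' half-disk $C_{1,0}$, a rotation to move the center from $1$ to $p$, and then the Möbius map $M_{-pt}$ which by definition \eqref{eq:capdef} carries $C_{p,0}$ onto $C_{p,t}$. Concretely, I would first fix once and for all a conformal bijection $\Phi:\D\to C_{1,0}$ of the unit disk onto the half-disk centered at $1$ (for instance a rotated and rescaled version of the Cayley-type map $z\mapsto \tfrac12\bigl(1 - \tfrac{1+iz}{1-iz}\bigr)$ or any convenient elementary formula), normalized so that it fixes the two ``corner'' boundary points and sends $0$ to a chosen interior point. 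Then set
\begin{equation*}
K_{p,t} = M_{-pt}\circ R_{p}^{\,0}\circ \Phi,
\end{equation*}
where $R_p^{\,0}$ denotes the rotation $z\mapsto pz$ (so $R_p^{\,0}(C_{1,0})=C_{p,0}$, using $C_{p,0}=pC_{1,0}$). Because $M_{-pt}$, $R_p^{\,0}$ and $\Phi$ are all conformal diffeomorphisms onto their images, the composite $K_{p,t}:\D\to C_{p,t}$ is a conformal map for every $(p,t)\in S^1\times[0,1)$.

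The second step is to identify the limit as $t\to 1$. Since $C_{p,t}$ expands to the full disk $\D$ as $t\to 1$ (as noted after \eqref{eq:capdef}), one expects $K_{p,t}$ to tend to a conformal self-map of $\D$; the claim is that with the normalization above this limit is the identity. The key computation is the behavior of the Möbius factor $M_{-pt}$ as $t\to 1$. Writing $M_{-pt}(z)=\dfrac{z-pt}{-pt\,\overline{\phantom{x}}\!z+1}=\dfrac{z-pt}{1-\bar p t z}$, one checks that as $t\to 1^-$ (with $p\to q$) the map $M_{-pt}$ converges locally uniformly on $\D$ to the constant $-q$? — no: the right normalization is that $M_{-pt}$, precomposed with the fixed map onto the half-disk, ``spreads out'' the half-disk to fill $\D$. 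The correct way to pin this down is: conjugate so that the fixed corner points of the half-disk (the endpoints of the diameter bounding $C_{1,0}$) are the images under $M_{-pt}$ of the endpoints $a,b$ of the geodesic $\gamma$, and these endpoints both approach $q$ as $t\to 1$; meanwhile the third normalization point (say $\Phi(0)$, interior) is carried to an interior point of $C_{p,t}$ that fills out $\D$. One then invokes the standard Carathéodory-type convergence principle for conformal maps: a sequence of conformal maps of $\D$ onto domains $C_{p,t}$ that exhaust $\D$ in the Carathéodory sense, normalized at three boundary/interior points whose images converge to the corresponding data for the identity map of $\D$, converges locally uniformly to the identity. Equivalently, after extracting a subsequence one gets a limit $K$ which is a conformal self-map of $\D$ (hence a Möbius automorphism) matching the limiting normalization, which forces $K=\mathrm{id}$; since the limit is independent of the subsequence, the full family converges.

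The main obstacle — and the step that needs the most care — is making the normalization robust enough that the limit is genuinely the identity and not merely \emph{some} automorphism of $\D$ depending on $q$ or on the path of approach. This requires choosing the fixed map $\Phi$ and tracking which boundary points of $C_{p,t}$ (namely the two endpoints $a,b$ of $\gamma$, which coalesce at $q$, and one further reference point) correspond to the corner and interior data, and then verifying that all three pieces of normalization data for $K_{p,t}$ converge to those of $\mathrm{id}_\D$. A clean way to execute this is to note from \eqref{eq:capdef} and \eqref{eq:Mxiconj} that it suffices to treat $p=1$ and use the rotational relations $C_{p,t}=pC_{1,t}$, reducing everything to a one-parameter degeneration $M_{-t}\circ\Phi$ as $t\to1$, for which the convergence $M_{-t}\to\mathrm{id}$ locally uniformly on $\D$ — here one does want the limit of $M_{-t}$ itself — combined with $\Phi$ being a fixed homeomorphism onto $\overline{C_{1,0}}\subset\overline\D$ gives the result after the Carathéodory argument; the rotational invariance then transfers the limit to general $p\to q$. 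The routine parts I would not belabor are the explicit formula for $\Phi$, the verification that $M_{-pt}(C_{p,0})=C_{p,t}$ (true by definition), and the elementary limit of the Möbius coefficients.
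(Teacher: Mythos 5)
Your proposed construction $K_{p,t}=M_{-pt}\circ R_p^{\,0}\circ\Phi$ does not converge to the identity, and in fact degenerates: since $-pt\to -p\in\partial\D$, the M\"obius factor $M_{-pt}$ converges locally uniformly on $\D$ to the \emph{constant} map $-p$ (indeed $M_{-t}(0)=-t\to-1$), so your family $K_{p,t}$ sends every fixed interior point of $\D$ out toward the boundary point $-p$. You half-notice this midway through (``converges $\dots$ to the constant $-q$? --- no: the right normalization is $\dots$''), but you never actually repair the normalization, and the final line ``$M_{-t}\to\mathrm{id}$ locally uniformly on $\D$ --- here one does want the limit of $M_{-t}$ itself'' is simply false. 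Consequently the Carath\'eodory-kernel argument you invoke cannot apply: it requires the maps to be anchored at a fixed interior point, which your composition does not preserve as $t\to 1$.

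This is exactly the difficulty the paper is careful about. The paper does \emph{not} build $K_{p,t}$ as an explicit M\"obius composed with a fixed reference map; it takes $K_C:\D\to C_{p,t}$ to be the abstract conformal map characterized by the unusual three-boundary-point normalization $K_C(p)=p$, $K_C(M_{p/3}(a))=a$, $K_C(M_{p/3}(b))=b$, where $a,b$ are the geodesic endpoints, precisely because this makes $K_C$ ``push outward'' near $p$ and counteract the collapse you would otherwise see. The specific factor $1/3$ is not arbitrary: after conjugating to the half-plane by $W$ one has $W^{-1}\circ M_{1/3}\circ W=$ dilation by $1/2$, which identifies the normalized map with $H_r(z)=rS^{-1}(2z/r)$ on the half-disk of radius $r$, and the locally uniform limit $H_r\to\mathrm{id}$ as $r\to\infty$ then follows by a one-line Taylor expansion. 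To salvage your approach you would need to discard the explicit formula and instead carry out exactly this kind of normalization bookkeeping; the Carath\'eodory intuition is fine, but the claim that the limit ``must'' be the identity is only true after the right normalization has been built in, and your construction builds in the wrong one.
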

\begin{proposition}[$-1 < t \leq 0$]\label{lemma:disktosmallcaps}
A family $K_{p,t}: \D \rightarrow C_{p,t}$ of conformal maps exists for $(p,t)\in S^1\times (-1,0]$ such that as $t\to-1$ and $p\to q \in S^1$, one has
  \[
  \text{$\tau_{p,t}\circ K_{p,t} \to R_q$ \ locally uniformly on $\D$.}
  \]
\end{proposition}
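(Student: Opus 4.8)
\textbf{Proof proposal for \autoref{lemma:disktosmallcaps}.}

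The plan is to reduce the claim about the small caps ($-1 < t \leq 0$) to the already-established claim about the large caps ($0 \leq t < 1$) by exploiting the hyperbolic reflection $\tau_{p,t}$, which interchanges a cap with its complement. The key observation is that the complementary cap of $C_{p,t}$ is $C_{-p,-t}$, so that a conformal map onto a small cap $C_{p,t}$ with $t \leq 0$ is obtained from a conformal map onto the large cap $C_{-p,-t}$ by post-composing with the hyperbolic reflection. Concretely, I would \emph{define}
\[
K_{p,t} = \tau_{p,t} \circ K_{-p,-t} : \D \to C_{p,t} , \qquad (p,t) \in S^1 \times (-1,0] ,
\]
where $K_{-p,-t}: \D \to C_{-p,-t} = (C_{p,t})^\star$ is the conformal map furnished by \autoref{lemma:disktobigcaps} (note $-t \in [0,1)$ as required). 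Since $\tau_{p,t}$ is an anticonformal involution of $\CD$ carrying $C^\star_{p,t}$ onto $C_{p,t}$, the composite $K_{p,t}$ is an anticonformal diffeomorphism from $\D$ onto $C_{p,t}$; if one insists on a genuinely conformal (holomorphic) map, one simply precomposes with complex conjugation $z \mapsto \bar z$, which does not affect any of the limiting statements below.

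Next I would read off the limit. As $t \to -1$ with $p \to q$, we have $-t \to 1$ and $-p \to -q$, so \autoref{lemma:disktobigcaps} gives $K_{-p,-t} \to \mathrm{id}$ locally uniformly on $\D$. Therefore
\[
\tau_{p,t} \circ K_{p,t} = \tau_{p,t} \circ \tau_{p,t} \circ K_{-p,-t} = K_{-p,-t} \longrightarrow \mathrm{id} \quad \text{locally uniformly on } \D,
\]
using that $\tau_{p,t}$ is an involution. But the proposition asks for the limit to be $R_q$, not $\mathrm{id}$; this discrepancy signals that the correct definition should carry an extra reflection. The fix is to set $K_{p,t} = \tau_{p,t} \circ K_{-p,-t} \circ R_1$ (or equivalently absorb an $R_1$ on the domain side), so that $\tau_{p,t} \circ K_{p,t} = K_{-p,-t} \circ R_1 \to \mathrm{id} \circ R_1 = R_1$. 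To obtain $R_q$ rather than $R_1$, one instead composes on the domain with a rotation depending on $p$: using the rotational relations $\tau_{p,t} = p \circ \tau_{1,t} \circ p^{-1}$ and $C_{p,t} = p\, C_{1,t}$, one can arrange $K_{p,t} = p \circ K_{1,t} \circ p^{-1}$, and then $\tau_{p,t}\circ K_{p,t} = p \circ (\tau_{1,t}\circ K_{1,t}) \circ p^{-1} \to p \circ R_1 \circ p^{-1} = R_p \to R_q$ by continuity of $p \mapsto R_p$ and the uniform-on-compacta convergence. So the argument is: first handle $p = 1$ by the reflection trick, then rotate and pass to the limit in $p$ separately.

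The main obstacle I anticipate is bookkeeping the interaction of the reflection $R_p$, the M\"obius map $M_{pt}$, and the hyperbolic reflection $\tau_{p,t}$ so that the chosen family $K_{p,t}$ is simultaneously (i) conformal onto the correct cap $C_{p,t}$, (ii) compatible with the rotational normalization $K_{p,t} = p \circ K_{1,t} \circ p^{-1}$, and (iii) such that the composite $\tau_{p,t}\circ K_{p,t}$ converges to exactly $R_q$ and not to some rotated variant. This is precisely where the conjugation identities \eqref{eq:Mobiusconj}, \eqref{eq:taudef}, and \eqref{eq:tworot} earn their keep: the relation $M_{-pt} = \tau_{p,t} \circ R_p \circ M_{pt}$ is the algebraic heart that lets one trade a hyperbolic reflection for a Euclidean one up to M\"obius factors, and tracking which factor lands where determines the limit. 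A secondary point requiring care is the passage of locally uniform convergence through the continuous (but not globally bounded near $S^1$) maps $\tau_{p,t}$: since all relevant limits are taken on a fixed compact subset of $\D$ and $\tau_{p,t}$ depends continuously on $(p,t)$ with values in $\CD$, a standard equicontinuity/normal-families argument handles this, but it should be stated explicitly.
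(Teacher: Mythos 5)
Your proposal is substantively correct and takes the same underlying route as the paper: build the small-cap map from a big-cap map by composing with the hyperbolic reflection $\tau_{p,t}$, insert a Euclidean reflection to restore conformality and tune the limit, and then read off $\tau_{p,t}\circ K_{p,t}$. Where you and the paper differ is in the packaging. You arrive at the correct definition only after two corrections (first noticing $\tau_{p,t}\circ K_{-p,-t}$ is anticonformal, then noticing the limit comes out as $R_1$ rather than $R_q$), and you fix the second problem by falling back on rotational conjugation from $p=1$. The paper instead writes down in one stroke
\[
K_{p,t} \;=\; \tau_{p,t}\circ R_p\circ K_{p,-t},
\]
using $K_{p,-t}$ (same center $p$) rather than $K_{-p,-t}$: here $R_p$ sends $C_{p,-t}$ to the complement $C_{-p,-t}=C^\star_{p,t}$ and then $\tau_{p,t}$ folds that onto $C_{p,t}$; the two anticonformal factors compose to a conformal map; and the identity $\tau_{p,t}\circ K_{p,t}=R_p\circ K_{p,-t}$ is immediate, giving the limit $R_q\circ\mathrm{id}=R_q$ directly from \autoref{lemma:disktobigcaps} with no extra conjugation step. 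Once you carry out your rotational conjugation you obtain the same map as the paper (using that $K_{1,-t}$ commutes with complex conjugation by its symmetric normalization), so the discrepancy is one of elegance, not correctness. Two points to tighten: first, your ``pass to the limit in $p$ separately'' should be phrased as a joint limit in $(p,t)$ (which does work, since rotations carry compact sets to compact sets and $R_p\to R_q$ uniformly on compacts); second, you should verify that your small-cap definition agrees with the large-cap $K_{p,0}$ at $t=0$ — the paper does this explicitly, and it is what makes the global continuity claim \eqref{eq:mapcont} hold across the seam $t=0$.
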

When $t=0$, the two propositions yield the same map $K_{p,0}$. Further the maps $K_{p,t}$ extend to $\overline{\D}$ and
\begin{equation} \label{eq:mapcont}
\text{$K_{p,t}(z)$ is continuous as a function of $(p,t,z) \in S^1 \times (-1,1) \times \overline{\D}$.}
\end{equation}
The proofs appear later in the section. 

\subsection*{Computations in the upper halfplane}\label{section:prooflemmacap}
Some of the needed calculations are more transparent in the halfplane. Define a M\"{o}bius transformation $W:\overline{\HH}\rightarrow\CD$ that wraps the halfplane onto the disk (\autoref{fig:Wmap}) by
\[
W(z) = \frac{i-z}{i+z} , \qquad z \in \overline{\HH} , 
\]
so that 
\[
W(0) = 1 , \quad W(\pm 1) = \pm i , \quad W(i) = 0.
\]
\begin{figure}[h]
\includegraphics[scale=0.5]{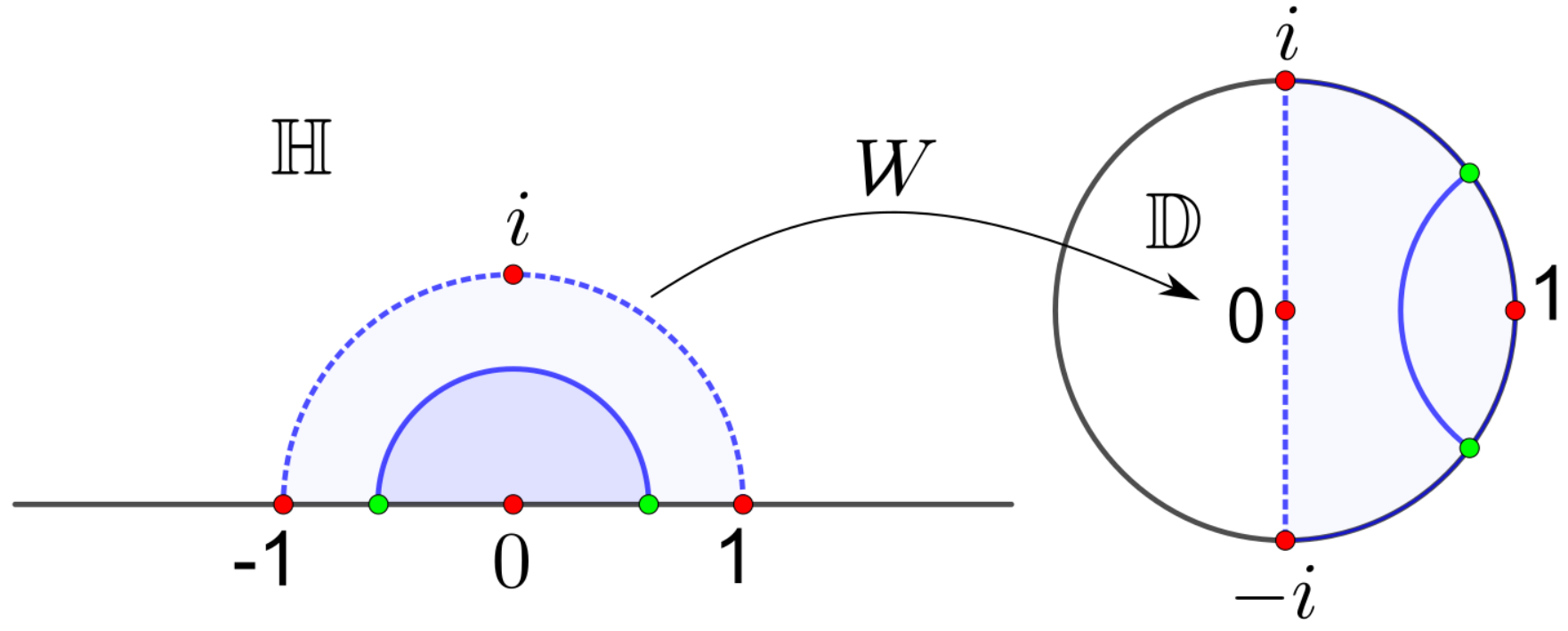}
\caption{\label{fig:Wmap} The map $W$ sends the origin to the point $1$. Half disks centered at the origin in the upper halfplane are mapped to hyperbolic caps in the disk.}
\end{figure}

The key fact is that $M_{1/3}$ on the disk corresponds to dilation by $1/2$ in the upper halfplane, since a direct calculation shows
\begin{equation}
(W^{-1} \circ M_{1/3} \circ W) (z) = \frac{1}{2} z . \label{eq:MWconj}
\end{equation}
%
%More generally,
%%
%\begin{equation} 
%(W^{-1} \circ M_t \circ W) (z) = \frac{1-t}{1+t} z , \qquad t \in (-1,1) . \label{eq:MWgeneral}
%\end{equation}

Next, define a conformal map $S$ from the unit disk $\D$ to the doubly-slit plane
\[
\C \setminus \left( (-\infty,-1] \cup [1,\infty) \right) 
\]
by 
\[
S(z) = \frac{2}{z+1/z} = \frac{2z}{z^2+1} , \qquad z \in \D .
\]
The map satisfies
\[
S(\pm 1) = \pm 1 , \quad S(0) = 0 , \quad S^\prime(0)=2 .
\]
Clearly $S$ is symmetric in the horizontal axis, with $S(z)=\overline{S(\overline{z})}$, and $S$ maps the upper halfdisk $\D^+$ to the upper halfplane $\HH$. 

By rescaling $S$ and inverting, we define a map 
\begin{equation}
H_r(z) = r S^{-1} (2z/r) \label{eq:defr}
\end{equation}
from the halfplane $\HH$ to the halfdisk $\D^+(r)$ of radius $r>0$.  Note that
\[
H_r(0)= 0 , \quad H_r(\pm r/2) = \pm r .
\]
The factor of $2$ in the definition of $H_r$ ensures convergence to the identity, in the next lemma. 
\begin{lemma}\label{le:largecap}
	\[
	\text{$H_r \to \text{id.}$ \ locally uniformly on $\HH$, as $r \to \infty$.}
	\]
\end{lemma}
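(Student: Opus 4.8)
The plan is to reduce everything to the known local uniform convergence $S^{-1}(w) \to w/2$ as we stay near the origin, since $S'(0) = 2$ means $S^{-1}$ has derivative $1/2$ at $0$. First I would examine $H_r(z) = r S^{-1}(2z/r)$ on a fixed compact set $E \subset \HH$. As $r \to \infty$ the argument $2z/r$ ranges over a compact subset of $\C$ shrinking toward the origin, so the relevant behavior of $S^{-1}$ is entirely its behavior near $0$. I would write the Taylor expansion of $S^{-1}$ at $0$: since $S(z) = 2z + O(z^3)$ (odd function, $S'(0)=2$), its inverse satisfies $S^{-1}(w) = \tfrac12 w + O(w^3)$ near $w=0$; more precisely there is a holomorphic function $g$ with $S^{-1}(w) = \tfrac12 w + w^3 g(w)$ on a neighborhood of $0$, with $g$ bounded by some constant $M$ on, say, $|w| \le 1$.

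Then the estimate is essentially immediate: for $z \in E$ with $|z| \le \rho$ and $r$ large enough that $2\rho/r \le 1$, we have
\[
H_r(z) - z = r\left( S^{-1}(2z/r) - \frac{z}{r} \right) = r \left( 2z/r \right)^3 g(2z/r) = \frac{8 z^3}{r^2} \, g(2z/r),
\]
so $|H_r(z) - z| \le 8 M \rho^3 / r^2 \to 0$ uniformly on $E$. This gives the claimed local uniform convergence. I would also note in passing that $H_r$ does map $\HH$ to $\D^+(r)$ as asserted — this follows because $S$ maps $\D^+$ onto $\HH$, hence $S^{-1}$ maps $\HH$ into $\D^+$, and rescaling by $r/2$ in the domain and $r$ in the range carries $\D^+$ to $\D^+(r)$; this was already stated in the text, so I would just cite it.

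The only point requiring a little care — and the main (mild) obstacle — is justifying the local expansion $S^{-1}(w) = \tfrac12 w + w^3 g(w)$ with control on $g$: one must know $S^{-1}$ is holomorphic in a genuine neighborhood of $0$ (true since $S'(0) = 2 \neq 0$, so $S$ is a local biholomorphism at $0$), that the expansion has no quadratic term (true because $S$ is odd, hence so is $S^{-1}$, hence the error after the linear term is $O(w^3)$ and itself odd), and that $g$ extends holomorphically and is therefore bounded on a closed disk of positive radius about $0$. All of these are standard facts about inverse functions of odd holomorphic maps with nonvanishing derivative at the origin, so the proof is short. One should be slightly mindful that "locally uniformly on $\HH$" means uniformly on compact subsets of the open halfplane, which is exactly what the bound $|H_r(z)-z| \le 8M\rho^3/r^2$ delivers once $\rho$ is chosen to bound the given compact set.
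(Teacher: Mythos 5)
Your proof is correct and follows essentially the same approach as the paper: expand $S^{-1}$ in a Taylor series at the origin (using $S^{-1}(0)=0$, $(S^{-1})'(0)=1/2$), substitute $w = 2z/r$, and observe that the error term vanishes uniformly on compact subsets as $r \to \infty$. Your use of the oddness of $S$ sharpens the error from $O(r^{-1})$ to $O(r^{-2})$, but this extra precision is unnecessary and the core argument is the same.
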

\begin{proof}
	Since $S^{-1}(0)=0$ and $(S^{-1})^\prime(0)=1/2$, the power series about the origin yields that 
	\begin{align*}
	H_r(z) = r S^{-1} \! \left( \frac{2z}{r} \right)
	& = r \left( \frac{1}{2} \frac{2z}{r} + O(2z/r)^2 \right) \\
	& = z + O(r^{-1}) \qquad \text{as $r \to \infty,$}
	\end{align*}
	where the error terms are uniform for $z$ belonging to a compact subset of the upper halfplane, since that ensures $|z|$ is bounded. 
\end{proof}

\subsection*{Convergence of the cap maps}

\begin{proof}[Proof of \autoref{lemma:disktobigcaps}]
For each cap $C=C_{p,t}$ with $t \in [0,1)$, let 
\[
K_C = K_{p,t} : \D \to C_{p,t}
\]
be the unique conformal map normalized by
\[
K_C(p) = p , \quad K_C \! \left( M_{p/3}(a) \right) = a , \quad K_C \! \left( M_{p/3}(b)\right) = b ,
\]
where $a$ and $b$ are the endpoints of the geodesic arc determining the cap. (This unusual  normalization of the endpoints is needed for proving convergence of $K_C$ to the identity map, as the cap expands to fill the whole disk. In effect, the endpoint normalization forces $K_C$ to ``push outward'' on the boundary near $p$, which counteracts the tendency of the map to ``pull inward'' as it compresses the disk into a cap.)

The maps satisfy a rotational conjugation that moves the center $p$ to the point $1$, namely 
\begin{equation}
K_{p,t} = p \circ K_{1,t} \circ p^{-1} , \label{eq:phiconjrelation}
\end{equation}
because each side of \eqref{eq:phiconjrelation} maps $\D$ conformally to the cap $C_{p,t}$, and the two sides agree at three points on the boundary, as follows. Each side maps $p$ to $p$. The right side maps $M_{p/3}(a)$ to $a$ as desired because
\begin{align*}
\left( p \circ K_{1,t} \circ p^{-1}\right) \left( M_{p/3}(a) \right) 
& = p \, K_{1,t} \left(M_{1/3} ( p^{-1}a) \right)  \qquad \text{by \eqref{eq:Mxiconj}} \\
& = p (p^{-1}a) = a 
\end{align*}
since $p^{-1}a$ is an endpoint for the cap $C_{1,t}$. Similarly each side of \eqref{eq:phiconjrelation} maps $M_{p/3}(b)$ to $b$. 

For the locally uniform convergence of $K_{p,t}$ to the identity as $t \to 1$, it suffices by the conjugation relation \eqref{eq:phiconjrelation} to prove the result for $p=1$, that is, to show
	\[
	\text{$K_{1,t} \to \text{id.}$ \ locally uniformly on $\D$, as $t \to 1$.}
	\]
	After conjugating with $W$ to transform the problem to the upper halfplane, the task further reduces to showing
	\begin{equation} 
	\text{$W^{-1} \circ K_{1,t} \circ W \to \text{id.}$ \ locally uniformly on $\HH$, as $t \to 1$.} \label{eq:taskreduced}
	\end{equation}
	Under the M\"{o}bius transformation $W^{-1}$, the cap $C_{1,t}$ in the disk transforms to a halfdisk of some radius $r$ centered at the origin in the halfplane, with $r$ depending in an increasing fashion on $t$. In particular, $r \to \infty$ as $t \to 1$ (expanding caps). 
	
	Recall now the conformal map $H_r$ defined in \eqref{eq:defr} that takes the halfplane $\HH$ to the halfdisk $\D^+(r)$, with $H_r(0)= 0$ and $H_r(\pm r/2) = \pm r$. We claim that 
	\begin{equation}
	W^{-1} \circ K_{1,t} \circ W = H_r . \label{eq:phipsiconj}
	\end{equation}
	Indeed, the left side maps $\HH$ conformally to $D^+(r)$, and maps $0$ to $0$. The left side also maps $r/2$ to $r$ (and $-r/2$ to $-r$), because 
	\begin{align*}
	& \left( W^{-1} \circ K_{1,t} \circ W \right) (r/2) \\
	& = (W^{-1} \circ K_{1,t}) \left( M_{1/3} \left( W(r) \right) \right) \qquad \text{by \eqref{eq:MWconj} with $z=r$} \\
	& = W^{-1} \left( W(r) \right) = r
	\end{align*}
	since $W(r)$ is an endpoint of the cap $C_{1,t}$. Hence the conformal maps on the two sides of \eqref{eq:phipsiconj} agree at three boundary points, and so must agree everywhere. 
	
	Thus we have reduced the task in \eqref{eq:taskreduced} to showing that $H_r \to \text{id.}$ locally uniformly on $\HH$ as $r \to \infty$, which is exactly the content of \autoref{le:largecap}. 
	
	The continuity of $K_{p,t}(z)$ as a function of $(p,t,z) \in S^1 \times [0,1) \times \overline{\D}$, which was asserted in \eqref{eq:mapcont}, follows from \eqref{eq:phiconjrelation} and \eqref{eq:phipsiconj} since $r$ depends continuously on $t$.
\end{proof}

\begin{proof}[Proof of \autoref{lemma:disktosmallcaps}]
For each cap $C=C_{p,t}$ with $t \in (-1,0]$, define the conformal map
\[
K_C = K_{p,t} : \D \to C_{p,t}
\]
in terms of the maps defined earlier with ``$t \in [0,1)$'' by letting
  \begin{gather}\label{eq:defphiptsmall}
   K_{p,t}=\tau_{p,t} \circ R_p \circ K_{p,-t} .
 \end{gather}
The image of the right side is indeed the cap $C_{p,t}$, because $K_{p,-t}$ maps onto $C_{p,-t}$, which reflects under $R_p$ to $C_{-p,-t}=C^\star_{p,t}$, which reflects hyperbolically under $\tau_{p,t}$ to $C_{p,t}$; or else more prosaically, compute that 
\[
\tau_{p,t} \circ R_p (C_{p,-t}) = C_{p,t}
\]
by using formula \eqref{eq:tworot} and the definition \eqref{eq:capdef} of the caps. 

When $t=0$, the two sides of \eqref{eq:defphiptsmall} are consistent since $\tau_{p,0}=R_p$ and $R_p \circ R_p$ is the identity. 

The definition \eqref{eq:defphiptsmall} implies that $\tau_{p,t}\circ K_{p,t} = R_p \circ K_{p,-t}$, which by \autoref{lemma:disktobigcaps} converges locally uniformly to $R_q \circ \text{id.} = R_q$ as $t \to -1$ and $p \to q$. That proves \autoref{lemma:disktosmallcaps}. 

Finally, continuity of $K_{p,t}(z)$ as a function of $(p,t,z) \in S^1 \times (-1,0] \times \overline{\D}$ follows from definition \eqref{eq:defphiptsmall} and the continuity proved earlier for the case ``$t \in [0,1)$''. 
\end{proof}

\section{\bf The Robin problem on the unit disk} \label{sec:diskproblem}
Our trial functions for the third eigenvalue on $\Omega$ will involve conformal transplantation of the second Robin eigenfunction of the unit disk, whose properties we now recall. 

In this section, the eigenfunctions satisfy
\[
\begin{split}
\Delta v + \lambda v & = 0 \quad \text{in $\D$,} \\
\partial_\nu v + \alpha v & = 0 \quad \text{on $\partial \D$.} 
\end{split}
\]
We do not rescale the Robin parameter here by the perimeter of the double disk. Thus the range 
$\alpha \in [-4\pi,0]$ in \autoref{thm:main} corresponds in this section to $\alpha \in [-1,0]$. Below we treat all $\alpha \in \R$, in the hope that \autoref{thm:main} might one day be extended to a larger range of $\alpha$-values.

The next two propositions and figures are taken from \cite[Section 5]{FreiLauW2018} and \cite[Section 5]{FreiLauS2018}. While the first Robin eigenfunction is not needed for our work, we present it anyway because it helps one's understanding to see the Robin groundstate in relation to the more familiar Neumann and Dirichlet cases. 
\begin{proposition}[First Robin eigenfunction of the disk]\label{basic1} The first Robin eigenvalue of the unit disk is simple, and changes sign at $\alpha=0$ according to 
\[
\lambda_1(\D;\alpha)
\begin{cases}
< 0 & \text{when $\alpha < 0$,} \\
= 0 & \text{when $\alpha = 0$,} \\
> 0 & \text{when $\alpha > 0$.}
\end{cases}
\]
The first eigenfunction is positive and radial, and is radially increasing when $\alpha< 0$, constant when $\alpha=0$, and radially decreasing when $\alpha > 0$.
\end{proposition}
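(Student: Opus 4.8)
The plan is to reduce the problem to the one-dimensional Bessel equation and read everything off explicitly, after first using a ground-state argument to obtain positivity, simplicity, and radial symmetry. First I would note that a first eigenfunction $v$ minimizes the Rayleigh quotient $Q_\alpha$ over $H^1(\D)\setminus\{0\}$; since $|\nabla|v||=|\nabla v|$ a.e.\ and the boundary term sees only $|v|$, the function $|v|$ is again a minimizer, hence a first eigenfunction, so we may take $v\ge 0$. Writing the equation as $\Delta v = -\lambda v$ and absorbing $\lambda$ into a zeroth-order term with the correct sign, the strong maximum principle forces $v>0$ throughout $\D$. Two $L^2$-orthogonal functions cannot both be positive a.e., so the first eigenspace is one-dimensional; thus $\lambda_1(\D;\alpha)$ is simple.

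Next, for any rotation $\rho$ the function $v\circ\rho$ is again a first eigenfunction, hence $v\circ\rho=c(\rho)\,v$ by simplicity; comparing $L^2$ norms gives $|c(\rho)|=1$, and $v>0$ gives $c(\rho)=1$, so $v=v(r)$ is radial. It then solves Bessel's equation $v''+r^{-1}v'+\lambda v=0$ on $(0,1)$, bounded at $r=0$, with boundary condition $v'(1)+\alpha v(1)=0$; up to a positive constant, $v(r)=J_0(\sqrt\lambda\,r)$ when $\lambda>0$, $v\equiv\text{const}$ when $\lambda=0$, and $v(r)=I_0(\sqrt{-\lambda}\,r)$ when $\lambda<0$. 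For the sign of $\lambda_1$, testing $Q_\alpha$ with the constant function $1$ gives $Q_\alpha(1)=2\alpha$, so $\lambda_1\le 2\alpha<0$ when $\alpha<0$; when $\alpha>0$ the numerator $\int_\D|\nabla u|^2\,dA+\alpha\int_{\partial\D}|u|^2\,ds$ is strictly positive unless $u$ is a nonzero constant vanishing on $\partial\D$, which is impossible, so $\lambda_1>0$; and $\alpha=0$ is the Neumann case, with $\lambda_1=0$ and constant eigenfunction.

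Finally, the monotonicity follows from the explicit form. For $\alpha<0$ we have $v=c\,I_0(\sqrt{-\lambda_1}\,r)$ with $c>0$, and $I_0'=I_1>0$ on $(0,\infty)$, so $v$ is radially increasing. For $\alpha>0$ we have $v=c\,J_0(\sqrt{\lambda_1}\,r)$; positivity of $v$ on $[0,1]$ forces $\sqrt{\lambda_1}<j_{0,1}$, the first positive zero of $J_0$ (the endpoint value is excluded because $J_0(\sqrt{\lambda_1})=0$ together with the boundary condition would force $v'(1)=0$), and $j_{0,1}$ lies below the first positive zero of $J_1$, so $v'(r)=-c\sqrt{\lambda_1}\,J_1(\sqrt{\lambda_1}\,r)<0$ on $(0,1]$ and $v$ is radially decreasing.

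The only mildly delicate points are the positivity and simplicity of the ground state in the first step, and the accompanying need to know that $\sqrt{\lambda_1}$ stays below $j_{0,1}$ when $\alpha>0$ — that is, that the positive radial eigenfunction is the first, non-oscillatory Bessel branch — but both are handled by the strong maximum principle argument above. Everything else is routine; in any case the proposition is quoted from \cite[Section 5]{FreiLauW2018} and \cite[Section 5]{FreiLauS2018}, which may simply be cited.
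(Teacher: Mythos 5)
Your proof is correct. Note that the paper does not prove this proposition at all: immediately before stating it, the text says ``The next two propositions and figures are taken from \cite[Section 5]{FreiLauW2018} and \cite[Section 5]{FreiLauS2018},'' so \autoref{basic1} is simply quoted from those references (as you acknowledge at the end). Your argument---reduce to a positive ground state via $|v|$ and the strong maximum principle, deduce simplicity from the impossibility of two $L^2$-orthogonal positive functions, conclude radial symmetry from simplicity and rotational invariance, then read off the explicit $J_0$/$I_0$/constant branches and sign from the Bessel ODE and the test function $u\equiv1$---is the standard route and is essentially what the cited Freitas--Laugesen sections carry out. Two minor remarks: in the $\alpha>0$ case the inequality $\sqrt{\lambda_1}<j_{0,1}$ can be obtained even more directly by Hopf's lemma (if $v(1)=0$ then $v'(1)<0$, contradicting the Robin condition $v'(1)+\alpha v(1)=0$), avoiding the separate appeal to $J_0,J_1$ having no common zero; and it is worth stating explicitly that one may assume $v$ real-valued since the operator has real coefficients. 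Neither affects correctness.
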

\begin{figure}
\includegraphics[scale=0.6]{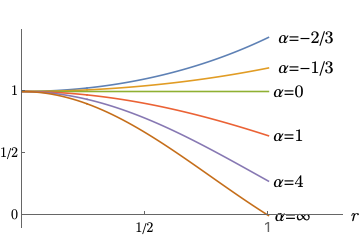}
\caption{\label{Robin_g_first}Plot of the (radially symmetric) first Robin eigenfunction of the unit disk, for various values of $\alpha$, normalized with height $1$ at the origin. When $\alpha=0$ it is the constant Neumann eigenfunction with eigenvalue $0$, and when $\alpha=\infty$ it is the Dirichlet eigenfunction $J_0(j_{0,1}r)$ with eigenvalue $j_{0,1}^2$. Between these extremes, the eigenfunction is $J_0(\sqrt{\lambda_1}\,r)$ where $\lambda_1=\lambda_1(\D;\alpha)>0$ is the eigenvalue. }
\end{figure}
\begin{figure}
\includegraphics[scale=0.6]{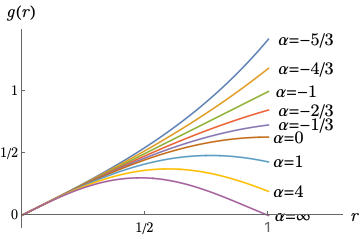}
\caption{\label{Robin_g_second}Plot of the radial part $g(r)$ of the second Robin eigenfunction of the unit disk, for various values of $\alpha$, normalized with $g^\prime(0)=1$. When $\alpha=-1$ one has $\lambda_2(\D;-1)=0$ and $g(r)=r$ is linear. When $\alpha>-1$ one has  $g(r)=(\text{const.})J_1(\sqrt{\lambda_2}\,r)$ where $\lambda_2=\lambda_2(\D;\alpha)>0$ is the eigenvalue. The eigenfunction is $g(r) e^{i \theta}$.}
\end{figure}
\begin{proposition}[Second Robin eigenfunctions of the disk]\label{basic2} The eigenfunction for $\lambda_2(\D;\alpha)$ can be taken in the form
\begin{equation} \label{eq:defv}
v = g(r) e^{i \theta} .
\end{equation}
The radial part has $g(0)=0,g^\prime(0)>0,g(r)>0$ for $r \in (0,1)$, and $g(1)>0$. When $\alpha \leq 0$ one finds $g(r)$ is strictly increasing, with $g^\prime(r)>0$. When $\alpha > 0$, the derivative $g^\prime$ is positive on some interval $(0,r_\alpha)$ and negative on $(r_\alpha,1)$, for some number $r_\alpha \in (0,1)$. 

The eigenvalue changes sign at $\alpha=-1$, with 
\[
\lambda_2(\D;\alpha)
\begin{cases}
< 0 & \text{when $\alpha < -1$,} \\
= 0 & \text{when $\alpha = -1$,} \\
> 0 & \text{when $\alpha > -1$.}
\end{cases}
\]
\end{proposition}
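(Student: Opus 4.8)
The plan is to separate variables and reduce everything to a one-dimensional Sturm--Liouville problem for the radial part. Writing the eigenfunction as $g(r)e^{im\theta}$ for integer $m$, the equation $\Delta v + \lambda v = 0$ becomes the Bessel-type ODE $g'' + \frac{1}{r}g' + (\lambda - m^2/r^2)g = 0$ on $(0,1)$, with the regularity condition at $r=0$ forcing $g(r) = c\,J_{|m|}(\sqrt{\lambda}\,r)$ when $\lambda>0$ (and the corresponding modified Bessel function $I_{|m|}$ when $\lambda<0$, and a monomial $r^{|m|}$ when $\lambda=0$). The Robin boundary condition $g'(1) + \alpha g(1) = 0$ then becomes a transcendental equation in $\lambda$. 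First I would establish, by the standard min-max comparison of Rayleigh quotients, that the lowest eigenvalue in each angular sector $m$ is increasing in $|m|$ and that the $m=0$ sector supplies $\lambda_1$ while the $m=\pm 1$ sectors (a two-dimensional eigenspace, hence the degeneracy) supply $\lambda_2$; this justifies taking the eigenfunction in the form \eqref{eq:defv}.

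Next I would pin down the qualitative behavior of $g$. The conditions $g(0)=0$, $g'(0)>0$, $g(1)>0$ come from the choice of normalization together with the fact that the first Dirichlet-type zero of $J_1$ occurs at $j_{1,1}>\sqrt{\lambda_2}$ (equivalently, $\lambda_2 < j_{1,1}^2$, which one gets from a trial-function bound), so $g$ has no interior zero and is positive up to and including the boundary. For the monotonicity statement, consider $h(r) = g'(r)$: differentiating the ODE shows $g'$ satisfies a related equation, but the cleaner route is to note that $g'(0)>0$ and that $g'$ can vanish only where $g'' = -\frac{1}{r}g' - (\lambda - 1/r^2)g$; one analyzes the sign of the right-hand side at a putative first critical point. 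When $\alpha \le 0$ the boundary condition gives $g'(1) = -\alpha g(1) \ge 0$, and a maximum-principle / Sturm-type argument shows $g'$ cannot change sign on $(0,1)$, hence $g' > 0$ throughout. When $\alpha > 0$ one has $g'(1) = -\alpha g(1) < 0$, so $g'$ must vanish at least once; that there is exactly one such point $r_\alpha$, with $g'>0$ before and $g'<0$ after, follows from the concavity structure of $J_1(\sqrt\lambda\, r)$ up to its first maximum combined with the bound $\lambda_2 < j_{1,1}^2$.

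Finally, the sign of $\lambda_2$ as a function of $\alpha$ is read off from the boundary equation. At $\alpha = -1$ the monomial $g(r) = r$ solves the ODE with $\lambda = 0$ and satisfies $g'(1) + \alpha g(1) = 1 - 1 = 0$, so $\lambda_2(\D;-1) = 0$. Monotonicity of $\lambda_2(\D;\alpha)$ in $\alpha$ — immediate from the Rayleigh quotient, since increasing $\alpha$ strictly increases $Q_\alpha(u)$ whenever the boundary integral is nonzero, and the $m=1$ eigenfunction does not vanish on $\partial\D$ — then forces $\lambda_2 < 0$ for $\alpha < -1$ and $\lambda_2 > 0$ for $\alpha > -1$. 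The main obstacle I anticipate is the monotonicity dichotomy for $g'$: ruling out spurious sign changes of $g'$ on $(0,1)$ when $\alpha \le 0$, and proving \emph{uniqueness} of the critical point $r_\alpha$ when $\alpha > 0$, requires a careful Sturm-comparison argument rather than a one-line Rayleigh estimate. Everything else is bookkeeping with Bessel functions and the variational characterization.
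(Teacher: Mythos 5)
The paper does not prove Proposition~\ref{basic2} in its own text; both it and Proposition~\ref{basic1} are imported verbatim from Freitas and Laugesen \cite{FreiLauW2018,FreiLauS2018}, so there is no internal proof to compare against. Your separation-of-variables/Bessel approach is the standard one and is surely the route taken in those references.

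There is, however, a genuine gap in the step you treat as routine. After separating into Fourier modes $e^{im\theta}$, the second disk eigenvalue is $\lambda_2(\D;\alpha)=\min\bigl(\lambda^{(0)}_2,\lambda^{(1)}_1\bigr)$, where $\lambda^{(m)}_k$ denotes the $k$th eigenvalue of the radial Sturm--Liouville problem with angular index $m$. The ``standard min-max comparison'' you invoke --- monotonicity of the potential $m^2/r^2$ in $|m|$ --- yields only that $\lambda^{(m)}_1$ is nondecreasing in $|m|$, hence $\lambda^{(0)}_1\le\lambda^{(1)}_1$. It does \emph{not} compare $\lambda^{(1)}_1$ with the \emph{second} radial eigenvalue $\lambda^{(0)}_2$, which is exactly what is needed to conclude the eigenfunction for $\lambda_2$ can be taken of the form $g(r)e^{i\theta}$ rather than being the second radial mode. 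That comparison is a real step: at the Neumann and Dirichlet endpoints it amounts to the Bessel inequalities $j'_{1,1}<j'_{0,2}$ and $j_{1,1}<j_{0,2}$, and for $\alpha\le -1$ (where $\lambda_2\le 0$, so $J_1$ is replaced by $I_1$ or the monomial $r$) it needs its own argument. Without it, the rest of the proposition --- in particular the sign change of $\lambda_2$ at $\alpha=-1$ --- is not established, since you would not know which sector controls $\lambda_2$.

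Two smaller points. First, once the explicit representation is in hand, the monotonicity dichotomy for $g'$ does not require a fresh Sturm or maximum-principle argument: for $\alpha\in(-1,0]$ monotonicity of $\lambda_2$ in $\alpha$ gives $\sqrt{\lambda_2}\le j'_{1,1}$, and $J_1$ is increasing on $[0,j'_{1,1}]$; for $\alpha=-1$, $g(r)=cr$; for $\alpha<-1$, $g(r)=cI_1(\sqrt{-\lambda_2}\,r)$ is strictly increasing; and for $\alpha>0$, the bound $j'_{1,1}<\sqrt{\lambda_2}<j_{1,1}$ identifies $r_\alpha=j'_{1,1}/\sqrt{\lambda_2}\in(0,1)$ as the unique critical point. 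Second, your justification of $g(0)=0$, $g'(0)>0$, $g(1)>0$ via ``$j_{1,1}>\sqrt{\lambda_2}$'' only applies when $\lambda_2>0$; you should state the (easy) verification in the $\lambda_2\le 0$ cases explicitly, since the proposition is asserted for all $\alpha\in\R$.
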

We will not need this fact, but the third eigenvalue $\lambda_3(\D;\alpha)$ of the disk equals the second eigenvalue, and has eigenfunction $g(r)e^{-i\theta}$.

The second eigenvalue of the disk can be evaluated explicitly when $\alpha>-1$ in terms of the Bessel function $J_1$, with $\lambda_2(\D;\alpha)=x(\alpha)^2$ where $x(\alpha) \in (0,j_{1,1})$ is the smallest positive solution of
\[
\frac{x J_1^\prime(x)}{J_1(x)} = - \alpha .
\]
This fact is derived in \cite[Section 5]{FreiLauW2018}, taking dimension $n=2$ there. 

\smallskip
The radial part $g$ of the second eigenfunction satisfies the following comparison result for mean values under conformal mapping, which will be central to proving \autoref{thm:main}. 
\begin{lemma}[Freitas and Laugesen \protect{\cite[Section 7]{FreiLauS2018}}] \label{le:areacompquoted}
Suppose $h :\Omega \to \D$ is a conformal map from a simply-connected planar domain $\Omega$ that has finite area. If $\alpha \leq 1$ then the radial part $g(r)$ of the eigenfunction for $\lambda_2(\D;\alpha)$ satisfies
%
%\begin{equation} \label{areaboundquoted}
\[
\frac{1}{\pi} \int_\D g^2 \, dA \leq \frac{1}{A(\Omega)} \int_\D g^2 |(h^{-1})^\prime|^2 \, dA = \frac{1}{A(\Omega)} \int_\Omega (g \circ h)^2 \, dA .
\]
%\end{equation}
%
Furthermore, if $\Omega$ is not a disk then the inequality is strict. 
\end{lemma}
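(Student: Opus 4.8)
The plan is to reduce the integral inequality to a pointwise statement about the density $|(h^{-1})'|^2$ together with a rearrangement/symmetrization argument exploiting that $g$ is increasing. Write $F = h^{-1}: \D \to \Omega$, a conformal map, so that $A(\Omega) = \int_\D |F'|^2\,dA$. The middle expression in the claimed chain is $\frac{1}{A(\Omega)}\int_\D g(r)^2 |F'(z)|^2\,dA(z)$, obtained from the rightmost one by the change of variables $w = F(z)$ (this identity is just conformal change of variables and needs no hypothesis beyond finite area). So the real content is the inequality
\[
\frac{1}{\pi}\int_\D g^2\,dA \;\le\; \frac{\int_\D g^2\,|F'|^2\,dA}{\int_\D |F'|^2\,dA}.
\]
Equivalently, setting $d\mu = |F'|^2\,dA$ (a finite measure on $\D$ of total mass $A(\Omega)$) and $d\nu = dA$ (mass $\pi$), we must show $\fint_\D g^2\,d\nu \le \fint_\D g^2\,d\mu$. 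Since $g$ is radial and nondecreasing in $r$ (this is where $\alpha \le 1$, in fact $\alpha\le 0$, enters, via \autoref{basic2}), $g^2$ is a nondecreasing function of $r$; so the inequality will follow if the conformal density $|F'|^2$ puts, on average over circles, \emph{more} mass at large radii than the uniform density does — more precisely, if the push-forward of $d\mu$ under $z \mapsto |z|$ stochastically dominates the push-forward of $\pi^{-1}\,d\nu$, which is the distribution with density $2r\,dr$ on $[0,1]$.

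The key step is therefore a "mass transport/radial monotonicity" lemma for conformal maps: writing $a_0 + a_1 z + a_2 z^2 + \cdots$ for the Taylor expansion of $F'$... rather, expand $F(z) = \sum_{k\ge 1} c_k z^k$ (we may assume $F(0)=0$ after translating $\Omega$, which does not change $|F'|$), so $F'(z) = \sum_{k\ge 1} k c_k z^{k-1}$ and, by orthogonality of $\{z^{k-1}\}$ on circles,
\[
\int_{|z|=r} |F'(z)|^2\,\frac{|dz|}{2\pi r} \;=\; \sum_{k\ge 1} k^2 |c_k|^2 r^{2k-2}.
\]
Thus $m(r) := \int_{\{|z|\le r\}} |F'|^2\,dA = \pi \sum_{k\ge 1} k |c_k|^2 r^{2k}$, an increasing function with $m(1) = A(\Omega)$. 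The claim $\fint g^2 d\mu \ge \fint g^2 d\nu$ then follows from the elementary fact that if $m(r)/m(1) \le r^2$ for all $r\in[0,1]$ — i.e. the normalized radial distribution of $d\mu$ is stochastically larger than that of the uniform measure — then integrating the nondecreasing function $g^2$ against it gives a larger average. And $m(r)/m(1) \le r^2$ holds because $\sum_k k|c_k|^2 r^{2k} = r^2 \sum_k k |c_k|^2 r^{2k-2} \le r^2 \sum_k k|c_k|^2 = r^2 m(1)/\pi$, using $r^{2k-2}\le 1$ termwise. I would carry this out via summation by parts (Abel summation) applied to $\int_0^1 (g^2)'(r)\,[\,(\text{something})\,]\,dr$, or equivalently write $g(r)^2 = g(0)^2 + \int_0^r (g^2)'(s)\,ds = \int_0^r (g^2)'(s)\,ds$ since $g(0)=0$, interchange integrals by Tonelli, and reduce to showing $\int_{\{|z|\ge s\}} |F'|^2\,dA \ge \frac{1-s^2}{\pi}\cdot\frac{1}{\pi}\int_\D|F'|^2\,dA\cdot\pi$... in any case the one inequality that does all the work is the termwise bound $r^{2k-2}\le 1$.

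For the strictness claim: equality in the termwise bound $r^{2k-2}\le 1$ (for $r<1$) forces $c_k = 0$ for all $k\ge 2$, so $F(z) = c_1 z$ is linear and $\Omega$ is a disk — unless $(g^2)'$ vanishes where it would matter, but $g' > 0$ on $(0,1)$ by \autoref{basic2}, so $(g^2)' = 2gg' > 0$ on $(0,1)$, and hence the only way to have equality throughout is $\Omega$ a disk. (One must check the edge case $g(1)$ versus the boundary, but since $g>0$ and increasing up to $r=1$ this causes no trouble; the mass at $r=1$ is a null set for $dA$.) The main obstacle I anticipate is purely bookkeeping: making the stochastic-dominance argument rigorous with the correct handling of the boundary circle and justifying the interchange of summation and integration (both fine, since everything is nonnegative and $F$ has finite Dirichlet energy, i.e. $\sum k|c_k|^2 < \infty$), and tracking where \emph{strict} monotonicity of $g$ is genuinely used so that the equality case is pinned down to $\Omega$ a disk. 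No deep input is needed beyond the monotonicity of $g$ from \autoref{basic2} and the elementary power-series identity for $\int_{|z|=r}|F'|^2$.
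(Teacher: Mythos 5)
Your argument is essentially Szeg\H{o}'s original proof, and it is correct as far as it goes: the power-series computation giving $m(r)=\pi\sum_k k|c_k|^2 r^{2k}$, the termwise bound $r^{2k-2}\le 1$ yielding $m(r)/m(1)\le r^2$, the layer-cake/Tonelli reduction of $\int g^2\,d\mu$ to $\int_0^1 (g^2)'(s)\,\mu(\{|z|>s\})\,ds$, and the strictness argument (some $c_k\neq 0$ with $k\geq 2$ makes $m(r)<r^2 m(1)$ on $(0,1)$, combined with $(g^2)'>0$ there) are all sound. The normalization $F(0)=0$ is harmless, as you observe, because the middle quantity $\int_\D g^2|(h^{-1})'|^2\,dA$ depends only on $|F'|$. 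The paper itself attributes this argument to Szeg\H{o} in the remark following the lemma.

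The gap is the range of $\alpha$. Your proof uses $g$ nondecreasing, hence $(g^2)'\ge 0$, in the step where $m(s)/A(\Omega)\le s^2$ is integrated against $(g^2)'$. By \autoref{basic2}, $g$ is increasing only when $\alpha\le 0$; for $\alpha\in(0,1]$, $g'$ changes sign at some $r_\alpha\in(0,1)$, so $(g^2)'$ is not signed and the stochastic-dominance reduction fails. Thus what you have is a proof for $\alpha\le 0$, not for the full range $\alpha\le 1$ that the lemma claims (and which the citation to Freitas--Laugesen covers via a refinement of this method). You half-acknowledge this by writing ``in fact $\alpha\le 0$,'' but the lemma as stated requires more. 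For the purposes of \autoref{thm:main} the restriction is immaterial, since there $\alpha/4\pi\in[-1,0]$, so the disk eigenfunction always has increasing radial part; but a proof of the lemma as stated would need to handle the non-monotone regime, and your argument does not.
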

Szeg\H{o}~\cite{Szego1954} proved this lemma under the assumption that $g$ is increasing, 
which holds for \autoref{thm:main} since $\alpha \leq 0$ there. 
Freitas and Laugesen~\cite[Section 7]{FreiLauS2018} extended Szeg\H{o}'s method to handle $\alpha \leq 1$, which in their paper was stated as $\alpha/2\pi \leq 1$ due to a different normalization. Also, their proof assumed $\Omega$ to have area $\pi$, but one may reduce to that case by rescaling $h$.

%\autoref{le:areacompquoted} can be extended to use subharmonic densities that are more general than $|(h^{-1})^\prime|^2$, which could be useful in more geometric settings.

\section{\bf Hersch--Szeg\H{o} normalization, fold maps, and trial functions}
\label{sec:trialfuncts}
Trial functions on $\Omega$ are obtained in this section by precomposing the disk eigenfunction $v=g(r)e^{i\theta}$ with a ``folding map" and with the inverse of the cap map $K_{p,t}$, and with a M\"obius transformation performing the Hersch--Szeg\H{o} method of renormalization. This last step ensures that the trial function is orthogonal to the first eigenfunction $f_1$ on $\Omega$, for each $(p,t)\in S^1\times (-1,1)$. A topological argument will then be used to gain orthogonality also to the second eigenfunction $f_2$, for some particular choice of $(p,t)$.

\subsection*{Hersch--Szeg\H{o} normalization} 
Take $\mathcal{C}$ to be the space of hyperbolic caps on $\D$, parameterized by coordinates $(p,t)\in S^1\times (-1,1)$.
Building on the classical renormalization method of Szeg\H{o} \cite{Szego1954} and Hersch \cite{Hersch1970}, we prove the following:
\begin{lemma}[Orthogonality]\label{centerofmass}
Suppose $\Omega$ is a bounded planar domain and $f\in L^1(\Omega)$ is nonnegative, with $\int_{\Omega}f\,dA>0$. Let $g:[0,1]\rightarrow\R$ be a continuous function with  $0=g(0)<g(1)$. Let $v=g(r)e^{i\theta}$ on $\overline{\D}$. Suppose $T : \mathcal{C} \times \Omega \to \D$ is continuous, and write $T_C(z)=T(C,z)$. 
For each $C \in \mathcal{C}$, define a complex-valued function %$V_C:\overline{\D}\rightarrow\C$ by
\[
V_C(w) = \int_\Omega (v \circ M_w \circ T_C) f \, dA , \qquad w \in \overline{\D} . 
\]
This $V_C$ is continuous, with $V_C(w) \neq 0$ when $w \in \partial \D$. Further, $V_C(w)=0$ for some $w \in\D$; and if in addition the function $g$ is strictly increasing then this vanishing point $w$ is unique and depends continuously on the cap $C$. 
\end{lemma}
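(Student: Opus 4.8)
The plan is to recognize $V_C$ as a Hersch--Szeg\H{o} vector field on the closed disk and run the standard degree-theory argument, with the refinement of uniqueness and continuity under the monotonicity hypothesis.

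First I would establish continuity of $V_C$ on $\overline{\D}$. Since $T_C$ takes values in $\D$ and $M_w$ depends continuously on $w \in \overline{\D}$ (with $M_w$ reducing to the constant map $z \mapsto w$ when $|w|=1$), the integrand $v \circ M_w \circ T_C$ depends continuously on $w$; it is also bounded by $\max_{[0,1]} g$ times $|f|$, which is integrable, so dominated convergence gives continuity of $V_C(w)$ in $w$, and jointly in $C$ as well once one also uses continuity of $T$ in $C$.

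Second, the boundary behavior: for $w \in \partial\D$ we have $M_w \equiv w$, so $v \circ M_w \circ T_C \equiv v(w) = g(1)\,w$ (here $g(1)>0$), whence $V_C(w) = g(1)\,w \int_\Omega f\,dA$, which is a nonzero scalar multiple of $w$. In particular $V_C$ restricted to $\partial\D$ is a nonvanishing map of winding number $1$, so it is not nullhomotopic in $\C\setminus\{0\}$; therefore $V_C$ cannot be nonvanishing on all of $\overline{\D}$ (otherwise it would be a nullhomotopy). This yields a zero $w \in \D$. (Equivalently: $w \mapsto w - V_C(w)/(g(1)\int_\Omega f\,dA)$ maps $\overline{\D}$ to $\overline{\D}$ by the same boundary computation after a short estimate, and Brouwer applies; but the degree/winding argument is cleanest and is the ``simpler topological argument'' the authors advertise.)

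Third, uniqueness and continuity when $g$ is strictly increasing. Here I would show the zero is transverse and, in fact, that $V_C$ is a homeomorphism-like map near any zero, or more robustly that all zeros have the same local index $+1$, forcing uniqueness by the winding-number count on $\partial\D$. The key computation: if $V_C(w_0)=0$, I want to understand $V_C$ near $w_0$. Using the M\"obius cocycle $M_w = M_{w_0} \circ M_{w_1}$ for an appropriate $w_1$ near $0$ (with $w \mapsto w_1$ a local diffeomorphism fixing $0$), it suffices to analyze the case $w_0 = 0$, i.e.\ to show $0$ is the unique zero of $s \mapsto \int_\Omega (v \circ M_s \circ T_C) f\,dA$ with positive local index. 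Writing $v = g(r)e^{i\theta}$ and using that $M_s(\zeta)$, for $\zeta = T_C(z) \in \D$ and small $s$, moves $\zeta$ in a way whose effect on $g(|\zeta|)\zeta/|\zeta|$ one can differentiate, the derivative in $s$ of the integrand at $s=0$ is a positive-definite expression precisely because $g$ is increasing (this is where $g'\ge 0$, and $g(r)/r>0$, enter — the same structural fact underlying \autoref{le:areacompquoted}). Concretely, one expects the real-linear map $s \mapsto DV_C(0)[s]$ to be of the form $s \mapsto (\text{positive number})\,s + (\text{number})\,\bar s$ with the modulus of the anti-holomorphic part strictly smaller, hence invertible with positive determinant; monotonicity of $g$ is exactly what makes the holomorphic coefficient dominate. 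Once every zero has local index $+1$ and the total winding on $\partial\D$ is $1$, there is exactly one zero. Continuity of $w = w(C)$ then follows from the implicit function theorem (the differential $DV_C(w)$ being invertible) together with joint continuity of $V_C(w)$ in $(C,w)$; alternatively, a compactness-plus-uniqueness argument gives continuity without differentiability.

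The main obstacle is the third step: proving the zero is unique. The existence and boundary parts are routine Hersch--Szeg\H{o} bookkeeping, but pinning down that the Jacobian of $V_C$ at a zero is nonsingular (equivalently, that no second zero can appear) requires genuinely using the monotonicity of $g$, and the cleanest route is probably not a brute-force Jacobian computation but rather the observation that $w \mapsto V_C(w)$ is, after the $M_{w_0}$-conjugation, a perturbation of the identity whose deviation is controlled by how far $g$ is from the linear profile $g(r)=cr$ — and strict monotonicity keeps this perturbation within the regime where winding number arguments force a single preimage of $0$. This is the heart of \autoref{centerofmass} and is presumably where the restriction to $\alpha\le 0$ (hence increasing $g$) is used, as flagged in the introduction.
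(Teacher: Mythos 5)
Your existence, boundary, and continuity-in-$(C,w)$ arguments match the paper's Step 1 essentially line for line: dominated convergence for continuity, the observation $V_C(e^{i\phi}) = g(1)(\int_\Omega f\,dA)\,e^{i\phi}$ for the boundary behavior, and degree theory (equivalently, non-nullhomotopy of an outward-pointing boundary field) for existence of an interior zero. Your fallback suggestion for continuous dependence --- compactness plus uniqueness, no differentiability needed --- is exactly the paper's Step 3.

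Where you diverge is uniqueness, and this is where your proposal has a genuine gap. You aim to show the real-linear differential $DV_C(w_0)$ at any zero is nonsingular with positive determinant (holomorphic part dominating the anti-holomorphic part), so every zero has local index $+1$ and the total winding number $1$ on $\partial\D$ forces a single zero. This is a legitimate strategy in principle, but you never establish the key estimate --- you only say ``one expects'' the dominance and ``the cleanest route is probably not a brute-force Jacobian computation.'' That is precisely the step that needs proof, and without it the uniqueness claim is unsupported. The paper instead takes a \emph{global} route: having found a zero $w(C)$, it conjugates by $M_{w(C)}$ and a rotation to reduce to showing that the renormalized vector field $\widetilde V(\xi) = \int_\Omega (v\circ M_\xi\circ \widetilde T)\,f\,dA$, which vanishes at $\xi=0$, is nonzero for every $\xi\neq 0$; this nonvanishing is exactly the content of Girouard--Nadirashvili--Polterovich's Lemmas 2.2.4 and 3.1.1, which the paper cites and which use monotonicity of $g$ and nonnegativity of $f$. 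That argument avoids any Jacobian calculation and gives a global conclusion directly. If you want to pursue your local-index approach, you would need to actually differentiate $V_C$ under the integral and prove the holomorphic coefficient strictly dominates --- and you should be aware that the cited GNP lemmas already provide a shorter, purely monotonicity-based path to the same end.

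A small technical correction: the decomposition you write, $M_w = M_{w_0}\circ M_{w_1}$, is not available in general, since the maps $\{M_w\}_{w\in\D}$ do not form a subgroup of the disk automorphisms. The correct normal form, as in the paper, is $M_w(z) = M_\xi\bigl(e^{i\psi} M_{w_0}(z)\bigr)$ with a rotation factor $e^{i\psi}$ (and the paper gives explicit formulas for $\psi$ and $\xi$). This rotation matters because the nonvanishing lemma is applied to the conjugated field $\widetilde V$ built from $\widetilde T = e^{i\psi}(M_{w(C)}\circ T_C)$, and omitting $e^{i\psi}$ would break the reduction.
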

The lemma and its proof are due essentially to Girouard, Nadirashvili and Polterovich \cite[Lemmas 2.2.4, 2.2.5 and 3.1.1]{GirNadPolt2009}. The assumption that $g(r)$ is strictly increasing holds true in their Neumann case $\alpha=0$ and also when $\alpha < 0$, but not when $\alpha>0$ (see \autoref{Robin_g_second}).

%Incidentally, it is not important for the proof that $\Omega$ be a planar domain with Lebesgue measure. The proof would apply equally well if $\Omega$ were any topological space and $f \, dA$ was replaced by a Borel measure on $\Omega$. 
%
\begin{proof}[Proof of \autoref{centerofmass}]
Step 1 --- Existence. 
Notice $v=g(r)e^{i\theta}$ is continuous even at the origin, since $g(0)=0$. And $M_w(z)$ is continuous as a function of $(w,z) \in \overline{\D} \times \D$, taking values in $\overline{\D}$. In particular, $v \circ M_w \circ T_C$ is continuous and bounded on $\Omega$, and so $V_C(w)$ is well defined. Further, an application of dominated convergence shows that $V_C(w)$ is continuous as a function of $(C,w) \in \mathcal{C} \times \overline{\D}$.

The boundary behavior of $V_C$ is easily determined: when $w=e^{i\phi}$ one has $M_w(z)=e^{i\phi}$ for all $z \in \D$, and so 
\[
V_C(e^{i\phi}) = \left( g(1) \int_\Omega f \, dA \right) e^{i\phi} , \qquad \phi \in [0,2\pi] .
\]
Thus on the unit circle the continuous vector field $V_C$ is nonzero and points radially outward, remembering $g(1) \int_\Omega f \, dA > 0$ by assumption. Index theory now implies that $V_C$ vanishes somewhere in the interior of the disk. That is, $V_C(w)=0$ for some point $w = w(C) \in \D$. 

It remains to show this point $w(C)\in\D$ is unique and depends continuously on $C$. For these, we assume from now on that $g(r)$ is strictly increasing.

\smallskip
Step 2 --- Uniqueness. 
Fix $C \in \mathcal{C}$ and a corresponding point $w(C)$ constructed as above. Take $\psi \in \R$, and to simplify notation, let $\widetilde{T} = e^{i\psi} (M_{w(C)} \circ T_C) : \Omega \to \D$ so that $\widetilde{T}(z)$ is continuous. Define a new vector field
\[
\widetilde{V}(\xi) = \int_\Omega (v \circ M_\xi \circ \widetilde{T}) f \, dA , \qquad \xi \in \overline{\D} .
\]
Notice $\widetilde{V}(0)=e^{i\psi}V_C(w(C))=0$ by the choice of $w(C)$. Meanwhile because $f$ is nonnegative and $g$ is strictly increasing, $\widetilde{V}(\xi) \neq 0$ for all $\xi \neq 0$, by Girouard, Nadirashvili and Polterovich \cite[Lemma 3.1.1]{GirNadPolt2009} and the first paragraph in the proof of \cite[Lemma 2.2.4]{GirNadPolt2009}. 

To show $w(C)$ is the unique point at which $V_C$ can vanish, consider an arbitrary $w \in \D$ with $w \neq w(C)$ and decompose the M\"{o}bius map $M_w$ as 
\[
M_w(z) = M_\xi \! \left( e^{i\psi} M_{w(C)}(z) \right)
\]
where
\begin{gather*}
  e^{i\psi} = \frac{1-\overline{w(C)}w}{1-w(C)\overline{w}},\\
  \xi = M_w(-w(C)) \neq 0.
\end{gather*}
Then $V_C(w)=\widetilde{V}(\xi) \neq 0$ by above, and so $w(C)$ is the only point at which $V_C$ vanishes, proving uniqueness.

\smallskip
Step 3 --- Continuous dependence.
Suppose $C_n \to C$. Some subsequence $\{ w(C_{n_k}) \}$ converges to a point $w \in \overline{\D}$, and since $V_{C_n}(w(C_n))=0$ for each $n$ we conclude by letting $k \to \infty$ and using continuity that $V_C(w) = 0$. Hence $w=w(C)$ by uniqueness, and so $w(C_{n_k}) \to w(C)$ as $k \to \infty$. Applying this argument to each subsequence of the original $\{ C_n \}$ now shows that $w(C_n) \to w(C)$ as $n \to \infty$. That is, $w(C)$ depends continuously on $C$.
\end{proof}

Recall $f_1:\Omega\rightarrow\R$ is the eigenfunction corresponding to $\lambda_1(\Omega;\alpha/L)$. This groundstate does not change sign, and so we may choose $f_1>0$. 

From now on, fix $v=g(r)e^{i\theta}$ to be the eigenfunction corresponding to $\lambda_2(\D;\alpha/4\pi)$, as defined in \eqref{eq:defv}. The Robin parameter is $\alpha/4\pi$, with $\alpha \in \R$. 
Later when we prove \autoref{thm:main} we restrict to $\alpha \in [-4\pi,0]$.

Take a conformal map $T : \Omega \to \D$, so that the existence part of the Hersch--Szeg\H{o} \autoref{centerofmass} yields a $w \in \D$ with  
\begin{equation}\label{eq:NormalizedB}
  \int_\Omega (v \circ B) f_1 \,dA = 0 
\end{equation}
where $B = M_w \circ T$. For the rest of the paper we fix this normalized conformal map 
\[
B : \Omega \to \D .
\]

\subsection*{Fold map}
Given a hyperbolic cap $C=C_{p,t}$, define the ``fold map'' $F_C:\overline{\D}\to C$ by
\[
F_C(z) =
\begin{cases}
z & \text{if\ } z \in C, \\
\tau_C(z) & \text{if\ } z \in C^\star ,
\end{cases}
\]
where $\tau_C$ is hyperbolic reflection across the cap geodesic $\gamma$. 
We regard $F_C$ as folding the disk onto the cap $C$ across the geodesic $\gamma$.
This folding is two-to-one except on the geodesic, where it restricts to the identity. Clearly $F_C(z)$ depends continuously on $(C,z)$, in other words, on $(p,t,z)\in S^1\times (-1,1)\times \D$. 	

\subsection*{Trial functions and orthogonality}
Let $$G_C=(K_C)^{-1}:C\to\D$$ be the inverse of the conformal cap map defined in \autoref{section:familyconfomap}. For each hyperbolic cap $C$ and $w\in \overline{\D}$, define the trial function 
\[
u_{C,w}:\Omega\rightarrow\C
\]
by
\[
u_{C,w}=v\circ M_w\circ G_C \circ F_C \circ B,
\]
as shown schematically in \autoref{fig:Trial}. This function $u_{C,w}(z)$ is continuous as a function of $z \in \Omega$, is bounded by the maximum value of $|v|=g$, is smooth except along the preimage $B^{-1}(\gamma)$ of the geodesic defining $C$, and belongs to $H^1(\Omega)$ by conformal invariance of its Dirichlet energy (see the argument later for \eqref{eq:conformalinv}). 
\begin{figure}
\includegraphics[scale=0.1]{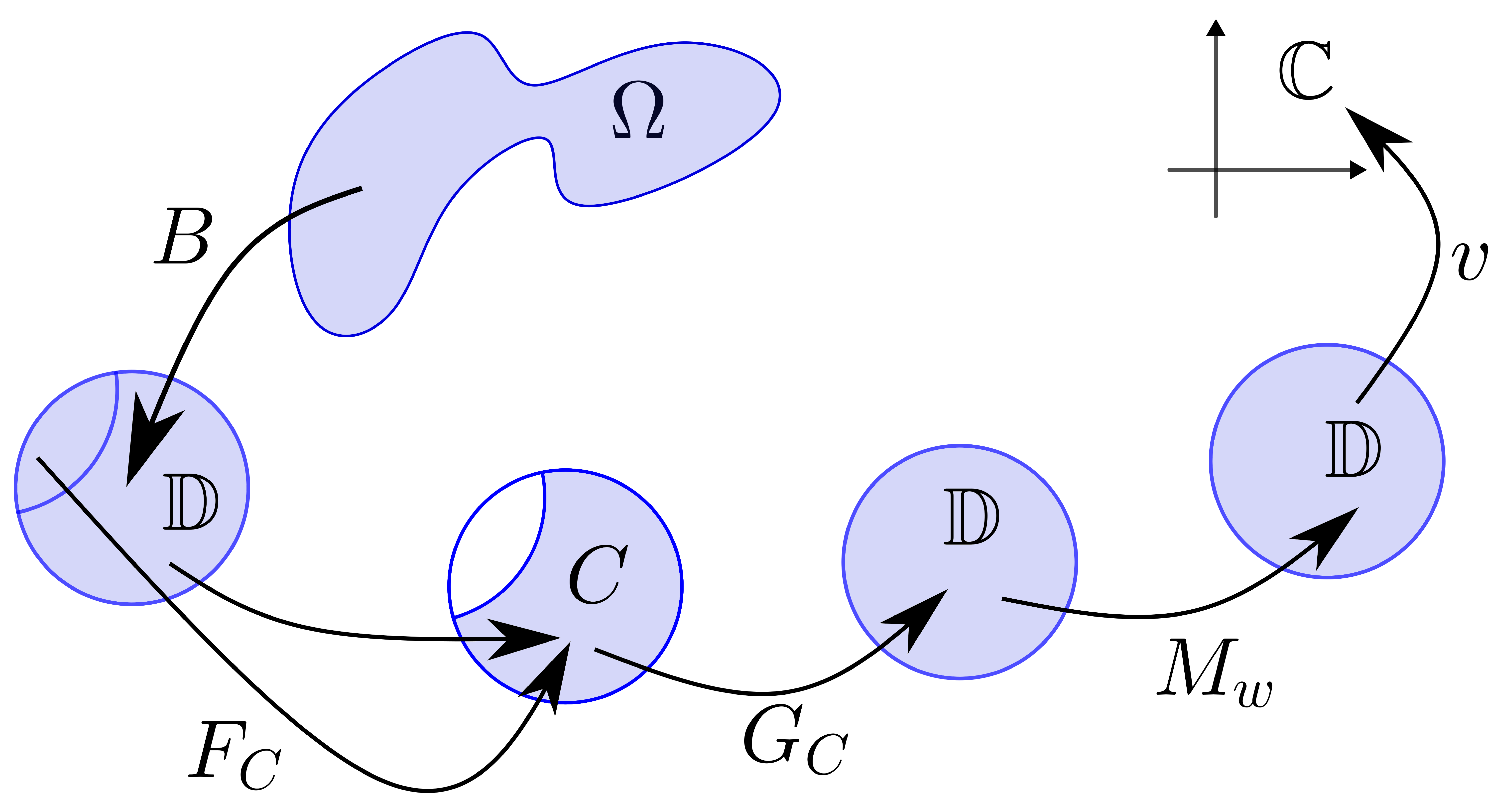}
\caption{\label{fig:Trial} Construction of the trial function $u_{C,w}$ on $\Omega$, by precomposing a disk eigenfunction $v$ with four maps. The M\"{o}bius parameter $w$ is chosen (for each cap $C$) to ensure orthogonality of the trial function to the first Robin eigenfunction on $\Omega$. The cap is then chosen by a topological argument to achieve orthogonality between the trial function and the second Robin eigenfunction.}
\end{figure}
\begin{lemma}[Continuous dependence of trial function]\label{lemma:continuousTrialFunction}
The function $u_{C,w}(z)$ depends continuously on $(C,w,z)$, in other words, on $(p,t,w,z)$. 
\end{lemma}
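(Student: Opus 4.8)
The statement is essentially a composition-of-continuous-maps argument, so the plan is to trace through the definition of $u_{C,w} = v \circ M_w \circ G_C \circ F_C \circ B$ from the outside in and check joint continuity at each stage. First I would note that $B : \Omega \to \D$ is a fixed conformal map, hence continuous and independent of the parameters. Next, the fold map $F_C$: the excerpt already records that $F_C(z)$ depends continuously on $(p,t,z) \in S^1 \times (-1,1) \times \overline{\D}$, so $F_C(B(z))$ is jointly continuous in $(p,t,z)$. The one subtlety worth a sentence is that $F_C$ is only two-to-one and non-smooth along the geodesic $\gamma$, but it is still genuinely continuous there (it restricts to the identity on $\gamma$ and the two branches match), so no harm is done.

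The heart of the matter is the cap-map inverse $G_C = (K_C)^{-1} : C_{p,t} \to \D$. Here I would invoke \eqref{eq:mapcont}, which asserts that $K_{p,t}(z)$ is continuous in $(p,t,z) \in S^1 \times (-1,1) \times \overline{\D}$. Continuity of the forward maps does not immediately give continuity of the inverses as functions on a varying domain, so the cleanest route is to observe that $G_C \circ F_C$ can be rewritten without explicitly inverting: since $F_C$ lands in the cap $C$ and $K_C$ is a bijection $\D \to C$, the value $(G_C \circ F_C)(\zeta)$ is the unique $z \in \overline{\D}$ with $K_{p,t}(z) = F_C(\zeta)$. To get joint continuity of this $z$ in the parameters, I would use a normal-families / Carathéodory-type argument: if $(p_n, t_n, \zeta_n) \to (p,t,\zeta)$ then $F_{C_n}(\zeta_n) \to F_C(\zeta)$, the points $z_n = G_{C_n}(F_{C_n}(\zeta_n))$ lie in the compact set $\overline{\D}$, any subsequential limit $z$ satisfies $K_{p,t}(z) = F_C(\zeta)$ by the continuity in \eqref{eq:mapcont}, and uniqueness of that solution (bijectivity of $K_{p,t}$) forces the whole sequence to converge to $z = G_C(F_C(\zeta))$.

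Once $G_C \circ F_C \circ B$ is shown to be jointly continuous in $(p,t,z)$ with values in $\overline{\D}$, the remaining two maps are easy: $M_w(\zeta)$ is continuous as a function of $(w,\zeta) \in \overline{\D} \times \D$ (and extends continuously where needed, with $M_w \equiv w$ on the boundary), as already used repeatedly in \autoref{centerofmass}; and $v = g(r)e^{i\theta}$ is continuous on $\overline{\D}$, including at the origin because $g(0)=0$. Composing all five continuous maps gives that $u_{C,w}(z)$ is continuous in $(C,w,z) = (p,t,w,z)$, which is the claim.

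The only real obstacle is the step concerning $G_C = (K_C)^{-1}$, because the domain $C_{p,t}$ of this inverse moves with the parameter, so ``$(K_C)^{-1}$ is continuous'' is not a formal consequence of ``$K_C$ is continuous.'' The normal-families argument above handles it, but one must be a little careful that the relevant limit points stay in $\overline{\D}$ (they do, by compactness) and that $K_{p,t}$ remains a bijection of $\overline{\D}$ onto $\overline{C_{p,t}}$ for $t$ in the open interval $(-1,1)$, which is guaranteed by \autoref{lemma:disktobigcaps}, \autoref{lemma:disktosmallcaps}, and the boundary extension noted after them. Everything else is bookkeeping on compositions of continuous functions.
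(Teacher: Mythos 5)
Your proposal is correct and follows essentially the same route as the paper: both reduce the problem to joint continuity of $G_C \circ F_C$, flag the moving domain of $G_C = (K_C)^{-1}$ as the only genuine obstacle, and resolve it by a compactness/subsequence argument — pass to a convergent subsequence in $\overline{\D}$, use the joint continuity of the forward maps $K_{p,t}$ from \eqref{eq:mapcont} to identify the limit, and invoke bijectivity to conclude the whole sequence converges. The remaining outer compositions with $M_w$, $v$, and $B$ are dispatched identically in both.
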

\begin{proof}
The M\"{o}bius map $M_w(z)$ is continuous as a function of $(w,z)$, and the conformal map $B(\cdot)$ is continuous. So by  definition of the trial function it suffices to show continuity of $(G_C \circ F_C)(z^*)$ as a function of $(C,z^*)$, where $z^* \in \D$. Notice the domain of $G_C$ is not fixed, being the cap $C$ itself, and so we must proceed carefully. 

Suppose $C_n \to C$ and $z_n^* \to z^* \in \D$. Write the parameters of $C_n$ as $(p_n,t_n)$ and those of $C$ as $(p,t)$, so that $p_n \to p$ and $t_n \to t$. The continuity goal is to show 
\begin{equation} \label{eq:inverselimit}
G_{p_n,t_n}(z_n)  \to G_{p,t}(z) \qquad \text{as $n \to \infty$,}
\end{equation}
where $z_n=F_{p_n,t_n}(z_n^*)$ and $z=F_{p,t}(z^*)$. Joint continuity of the fold map ensures $z_n \to z$, which will help in proving \eqref{eq:inverselimit}. 

Let $w_n=G_{p_n,t_n}(z_n)$. After passing to a subsequence we may suppose $w_n$ converges to some $w \in \overline{\D}$. Then $z = \lim_n z_n = \lim_n K_{p_n,t_n}(w_n) = K_{p,t}(w)$ by continuity in \eqref{eq:mapcont}. Hence $w=G_{p,t}(z)$, which proves the limit \eqref{eq:inverselimit} since the same argument applies to any subsequence of the original $\{ w_n \}$.
\end{proof}
Next we examine the limiting behavior of the trial functions as the caps expand to the full disk or shrink to a point. 
\begin{lemma}[Extension of trial function to large and small caps]\label{lemma:TrialFunctionExtension}
The function $u_{C,w}(z)$ with $C=C_{p,t}$ extends to $t=\pm 1$ as follows: 
\[
\begin{split}
    u_{C,w} \to v \circ M_{\widetilde{w}} \circ B & \qquad \text{as $p \to q \in S^1, t \to 1$ and $w \to \widetilde{w} \in \overline{\D}$,} \\
    u_{C,w} \to v \circ M_{\widetilde{w}} \circ R_q \circ B  & \qquad \text{as $p \to q \in S^1, t \to -1$ and $w \to \widetilde{w} \in \overline{\D}$,} 
\end{split}
\]
  where the convergence is locally uniform (and hence pointwise) on $\Omega$.
\end{lemma}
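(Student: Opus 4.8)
The plan is to recognize that \autoref{lemma:TrialFunctionExtension} is essentially an immediate corollary of the convergence of cap maps established in \autoref{lemma:disktobigcaps} and \autoref{lemma:disktosmallcaps}, combined with continuity of the outer maps $v$, $M_w$, and $B$. The trial function factors as $u_{C,w} = v \circ M_w \circ G_C \circ F_C \circ B$, so everything reduces to understanding the limiting behavior of the ``middle'' piece $G_C \circ F_C = (K_C)^{-1} \circ F_C$ on the disk as $t \to \pm 1$, after which we precompose with the fixed conformal map $B : \Omega \to \D$ and postcompose with the continuous map $v \circ M_w$ (using also $w \to \widetilde{w}$).

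First I would handle the large-cap case $t \to 1$. Here the cap $C_{p,t}$ expands to fill $\D$, so the fold map $F_C$ converges to the identity on $\D$ (any fixed point $z^* \in \D$ eventually lies in the interior of $C_{p,t}$, hence $F_C(z^*) = z^*$ for $t$ close to $1$; this is locally uniform since the geodesic $\gamma$ escapes to the boundary). By \autoref{lemma:disktobigcaps}, $K_{p,t} \to \text{id.}$ locally uniformly, and one checks the inverses $G_{p,t} = (K_{p,t})^{-1}$ also converge to the identity locally uniformly on $\D$ --- this follows from the continuity statement \eqref{eq:mapcont} together with the argument already used in the proof of \autoref{lemma:continuousTrialFunction}, or directly from Hurwitz/normal-families considerations for univalent maps. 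Composing, $G_C \circ F_C \to \text{id.}$ locally uniformly on $\D$, and then $u_{C,w} = v \circ M_w \circ (G_C \circ F_C) \circ B \to v \circ M_{\widetilde{w}} \circ \text{id.} \circ B = v \circ M_{\widetilde{w}} \circ B$ locally uniformly on $\Omega$, using continuity of $v$ (continuous on $\overline{\D}$ since $g(0) = 0$) and joint continuity of $M_w(z)$ in $(w,z) \in \overline{\D} \times \overline{\D}$.

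For the small-cap case $t \to -1$, I would use the relation $\tau_{p,t} \circ K_{p,t} \to R_q$ from \autoref{lemma:disktosmallcaps}. Now the cap $C_{p,t}$ collapses toward the point $p$, so for any fixed $z^* \in \D$, eventually $z^* \in C^\star_{p,t}$ and hence $F_C(z^*) = \tau_C(z^*)$; that is, $F_C \to \tau_{q,-1}$ in an appropriate sense. The clean way to package this: observe $G_C \circ F_C$ restricted to $C^\star$ equals $(K_C)^{-1} \circ \tau_C = (\tau_C \circ K_C)^{-1}$ on the relevant set, and the latter converges to $R_q^{-1} = R_q$ locally uniformly on $\D$ (since $R_q$ is an involution). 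Passing to inverses again via the continuity in \eqref{eq:mapcont}, one gets $G_C \circ F_C \to R_q$ locally uniformly on $\D$, whence $u_{C,w} \to v \circ M_{\widetilde{w}} \circ R_q \circ B$ locally uniformly on $\Omega$.

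I expect the main obstacle to be the bookkeeping around \emph{inverses}: the maps $K_C$ converge, but we need convergence of $G_C = (K_C)^{-1}$, and each $G_C$ has a different domain (the cap $C$ itself, which is moving). The cleanest argument is the subsequence-extraction trick already deployed in \autoref{lemma:continuousTrialFunction}: given $z^* \in \D$, set $z = F_C(z^*)$ (resp. its limit), let $w_n = G_{C_n}(z_n)$, pass to a convergent subsequence $w_n \to w \in \overline{\D}$, apply $K_{C_n} \to \text{id.}$ (resp. $\tau_{C_n} \circ K_{C_n} \to R_q$) and continuity \eqref{eq:mapcont} to deduce $K_{p,t}(w) = z$ (resp. $R_q(w) = z$), hence identify $w$, and conclude convergence of the full sequence. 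One must also note that the limiting compositions $v \circ M_{\widetilde w} \circ B$ and $v \circ M_{\widetilde w} \circ R_q \circ B$ make sense ($R_q$ maps $\D$ to $\D$, $B$ maps $\Omega$ to $\D$, $M_{\widetilde w}$ maps $\overline{\D}$ to $\overline{\D}$), and that the convergence, being locally uniform on $\Omega$, is in particular pointwise --- which is all that is needed for the subsequent Rayleigh-quotient estimates. The remaining details (local uniformity of $F_C \to \text{id.}$ or $F_C \to \tau$ away from the limiting geodesic, and the dominated-type bound $|u_{C,w}| \le \max g$) are routine.
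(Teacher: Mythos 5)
Your proof is correct and takes essentially the same approach as the paper: factor $u_{C,w}$ through $G_C \circ F_C$, note that on any fixed compact subset of $\D$ the fold map is eventually the identity (resp.\ the reflection $\tau_C$) as $t \to 1$ (resp.\ $t \to -1$), and then pass to the limit via \autoref{lemma:disktobigcaps} and \autoref{lemma:disktosmallcaps} using the identity $G_C \circ \tau_C = (\tau_C \circ K_C)^{-1}$. You are somewhat more explicit than the paper about the moving-domain issue for inverting $K_C$ (flagging the subsequence-extraction argument already used in \autoref{lemma:continuousTrialFunction}), but this is a careful refinement of the same argument, not a different route.
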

\begin{proof}
Part (i): $t \to 1$. Recall the definition $u_{C,w} = v \circ M_w \circ G_C \circ F_C \circ B$. If $E$ is a compact subset of $\D$ then $E \subset C$ whenever $t$ is sufficiently close to $1$, in which case the fold map $F_C$ equals the identity on $E$. Hence $F_C$ converges to the identity locally uniformly on $\D$. Next, $K_C : \D \to C$ converges locally uniformly to the identity as $t \to 1$, by \autoref{lemma:disktobigcaps}, and so its inverse map $G_C$ satisfies $G_C \to \text{id.}$ locally uniformly as $t \to 1$. Hence $u_{C,w} \to v \circ M_{\widetilde{w}} \circ B$ locally uniformly on $\Omega$ as $t \to 1$ and $w \to \widetilde{w}$.

\smallskip
Part (ii): $t \to -1$. If $E$ is a compact subset of $\D$ then $E \subset C^\star$ whenever $t$ is sufficiently close to $-1$, so that on $E$ the fold map $F_C$ is the hyperbolic reflection $\tau_C$. Observe that 
\[
G_C \circ \tau_C = (K_C)^{-1} \circ (\tau_C)^{-1} = (\tau_C \circ K_C)^{-1} 
\]
and $\tau_C \circ K_C \to R_q$ locally uniformly on $\D$ by \autoref{lemma:disktosmallcaps}. Hence 
\[
G_C \circ \tau_C \to (R_q)^{-1} = R_q
\]
locally uniformly, as $t \to -1$. Therefore $u_{C,w} = v \circ M_w \circ G_C \circ F_C \circ B \to v \circ M_{\widetilde{w}} \circ R_q \circ B$ locally uniformly on $\Omega$ as $t \to 1$.
\end{proof}

Joint continuity of the map $T_C(z) =(G_C \circ F_C \circ B)(z)$ was shown in the proof of \autoref{lemma:continuousTrialFunction}. Thus for each cap $C$, the Hersch--Szeg\H{o} \autoref{centerofmass} provides a point $w = w(C) \in \D$ for which the trial function is orthogonal to the first eigenfunction $f_1$, that is,
\begin{equation}\label{eq:herschunique}
\int_\Omega u_{C,w(C)} f_1 \,dA=0 .
\end{equation}
If $\alpha \leq 0$ (so that $g(r)$ is strictly increasing) then the point $w(C)=w(C_{p,t})$ is unique and depends continuously on the parameters $(p,t)$, by \autoref{centerofmass}.  

The next proposition shows that by choosing the cap correctly, the trial function can be made orthogonal also to the second eigenfunction $f_2$ on $\Omega$. This construction depends on $v \circ B$ being non-orthogonal to the second eigenfunction. If those functions are orthogonal, then we will use $v \circ B$ itself as a trial function when we later prove \autoref{thm:main}. 
\begin{proposition}[Orthogonality to 2nd eigenfunction]\label{propo:existenceCap}
If $\alpha \leq 0$ and $\int_\Omega (v \circ B) f_2 \,dA \neq 0$ then a hyperbolic cap $C$ exists for which
\begin{equation} \label{eq:herschunique2}
\int_{\Omega}u_{C,w(C)} f_2 \,dA=0 .
\end{equation}
\end{proposition}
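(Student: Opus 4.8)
The plan is to run a degree argument over the space of hyperbolic caps, combining the strategies of \cite{GirNadPolt2009} and \cite{BucurHenrot2019}. Assume the standing hypothesis $\int_\Omega (v\circ B) f_2 \, dA \neq 0$ and abbreviate $\Lambda_0 = \int_\Omega (v\circ B) f_2 \, dA \in \C$. Since $\alpha \leq 0$, the radial part $g$ is strictly increasing, so for each cap the Hersch--Szeg\H{o} point $w(C)\in\D$ fixed by \eqref{eq:herschunique} is unique and depends continuously on $C=C_{p,t}$, by \autoref{centerofmass}; together with \autoref{lemma:continuousTrialFunction} this shows the complex-valued function
\[
\Phi(C) = \int_\Omega u_{C,w(C)}\, f_2 \, dA , \qquad C\in\mathcal{C}\cong S^1\times(-1,1) ,
\]
is continuous, and our goal is to find a cap with $\Phi(C)=0$.

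First I would compactify the parameter space. Because $C_{p,t}$ expands to the full disk as $t\to 1$ (independently of $p$) and collapses to the point $p$ as $t\to -1$, the family $\mathcal{C}$ compactifies to a closed topological $2$-disk $\overline{\mathcal{C}}$ with interior $\mathcal{C}$, a single ``apex'' point corresponding to $t=1$, and boundary circle $\{t=-1\}$ parameterized by $q\in S^1$. The crucial step is to show $\Phi$ extends continuously to $\overline{\mathcal{C}}$ and to identify the apex and boundary values. For this I would first show $w(C)\to 0$ as $t\to \pm 1$: \autoref{lemma:disktobigcaps}, \autoref{lemma:disktosmallcaps} and the argument in the proof of \autoref{lemma:TrialFunctionExtension} give $T_C := G_C\circ F_C\circ B \to B$ (as $t\to 1$) and $T_C \to R_q\circ B$ (as $t\to -1$, $p\to q$), locally uniformly on $\Omega$; passing to a subsequential limit $w(C)\to\widetilde w\in\overline{\D}$ in the defining identity $\int_\Omega (v\circ M_{w(C)}\circ T_C)\, f_1\, dA = 0$ and invoking the uniqueness half of \autoref{centerofmass} for the limiting map identifies $\widetilde w$ as the Hersch--Szeg\H{o} point of $B$ (resp.\ of $R_q\circ B$). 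Both are $0$: for $B$ this is exactly \eqref{eq:NormalizedB}, and for $R_q\circ B$ it follows because $v = g(r)e^{i\theta}$ gives $v(-q^2\zeta) = -q^2 v(\zeta)$, hence $v(R_q(B(z))) = -q^2\,\overline{v(B(z))}$, and so $\int_\Omega (v\circ R_q\circ B)\, f_1\, dA = -q^2\,\overline{\int_\Omega (v\circ B)\, f_1\, dA} = 0$ since $f_1$ is real. Then \autoref{lemma:TrialFunctionExtension} (with $\widetilde w = 0$) gives $u_{C,w(C)}\to v\circ B$ as $t\to 1$ and $u_{C,w(C)}\to v\circ R_q\circ B$ as $t\to -1$, $p\to q$, locally uniformly on $\Omega$; dominated convergence ($|u_{C,w}|\le g(1)$) then extends $\Phi$ continuously to $\overline{\mathcal{C}}$ with apex value $\Lambda_0$ and boundary value
\[
\Phi(q,-1) = \int_\Omega v\bigl(R_q(B(z))\bigr)\, f_2(z)\, dA = -\,q^2\,\overline{\Lambda_0} , \qquad q\in S^1 .
\]

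Finally I would invoke degree theory. The loop $q\mapsto -q^2\,\overline{\Lambda_0}$ on $\partial\overline{\mathcal{C}}$ has winding number $2$ about the origin, which is nonzero precisely because $\overline{\Lambda_0}\neq 0$, i.e.\ precisely because of the hypothesis. Thus this loop is not null-homotopic in $\C\setminus\{0\}$; since it bounds the disk $\overline{\mathcal{C}}$, the continuous map $\Phi:\overline{\mathcal{C}}\to\C$ cannot avoid $0$, so it vanishes at some interior point. As the apex value $\Lambda_0$ is nonzero, that zero lies in $\mathcal{C} = S^1\times(-1,1)$ and corresponds to a genuine hyperbolic cap $C$, which satisfies \eqref{eq:herschunique2}.

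I expect the main obstacle to be the continuous extension of $\Phi$ to the compactified cap space, and specifically pinning down the degeneration of the Hersch--Szeg\H{o} points $w(C)$ as $t\to\pm 1$. This is where the restriction $\alpha\le 0$ (for uniqueness and continuity of $w(C)$ in \autoref{centerofmass}) and the prior normalization \eqref{eq:NormalizedB} are used in an essential way; once the extension and its boundary value are established, the topological conclusion is immediate.
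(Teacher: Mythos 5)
Your proposal is correct and follows essentially the same route as the paper: it re-derives the content of \autoref{lemma:asymptoticTrialFunction} (that $w(C)\to 0$ as $t\to\pm 1$, using uniqueness in \autoref{centerofmass} together with the commutation $v\circ R_q = R_q\circ v$), establishes the same apex and boundary values of $\Phi$, and then concludes by a winding-number obstruction. The only cosmetic difference is that you collapse the $t=1$ end to a point and run a degree argument on the resulting disk, whereas the paper keeps the cylinder and argues that the constant loop at $t=1$ and the degree-$2$ loop at $t=-1$ cannot be homotopic in $\C\setminus\{0\}$ --- these are the same topological fact.
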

\begin{proof}
Let $\zeta=\int_{\Omega}(v \circ B)f_2 \,dA \neq 0$. Define a complex valued function $\Phi$ on the cylinder $S^1\times(-1,1)$ by 
\[
\Phi(p,t)=\int_{\Omega}u_{p,t} \, f_2 \,dA , \qquad p \in S^1, \ t \in (-1,1) ,
\]
where 
\[
u_{p,t}=u_{C,w(C)}
\]
is the trial function associated with the cap $C=C_{p,t}$ and the normalizing point $w(C)$. Observe $\Phi(p,t)$ is continuous by dominated convergence, thanks to the continuous dependence of $u_{C,w}$ in \autoref{lemma:continuousTrialFunction} and continuity of $C \mapsto w(C)$. 

\autoref{lemma:asymptoticTrialFunction} below implies (again by dominated convergence) that $\Phi$ extends continuously to $t=1$, with 
\[
\Phi(q,1)=\int_{\Omega}(v\circ B) f_2 \, dA=\zeta , \qquad q \in S^1 .
\]
Thus $\Phi$ equals a nonzero constant at that end of the cylinder.

\autoref{lemma:asymptoticTrialFunction} also shows that $\Phi$ extends continuously to $t=-1$, with 
\[
\Phi(q,-1)=\int_{\Omega}(R_q \circ v \circ B) f_2 \, dA=R_q(\zeta) , \qquad q \in S^1.
\]
Thus the map $\Phi$ defines a homotopy between the loops
\[
\Phi(p,1)=\zeta \quad \text{and} \quad \Phi(p,-1)=R_p(\zeta) = -p^2 \overline{\zeta}
\]
in the complex plane. The first loop, being a nonzero constant, represents the trivial element of the fundamental group of the punctured plane $\C \setminus \{ 0 \}$. The second loop winds twice around the origin, and so represents a nontrivial element of that fundamental group. Therefore the loops cannot be homotopic in the punctured plane, and so the homotopy must pass through the origin at some point, meaning $\Phi(p,t)=0$ for some $(p,t)\in S^1\times(-1,1)$. The corresponding cap $C=C_{p,t}$ satisfies $\int_{\Omega}u_{C,w(C)}f_2\,dA=0$. 
\end{proof}

\begin{lemma}[Limit of $u_{p,t}$ for large and small caps]\label{lemma:asymptoticTrialFunction}
If $\alpha \leq 0$ then the trial functions $u_{p,t}$ converge locally uniformly (and hence pointwise) on $\Omega$, as follows:
  \[
  \begin{split}
    u_{p,t} \to v \circ B & \qquad\text{ as }\quad p \to q \in S^1 \text{ and }t \to 1,\\
    u_{p,t} \to R_q \circ v\circ B  &\qquad\text{ as }\quad p \to q \in S^1 \text{ and }t \to -1.
  \end{split}
  \]
\end{lemma}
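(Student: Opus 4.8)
The plan is to combine two ingredients: the limiting behavior of the trial function established in \autoref{lemma:TrialFunctionExtension}, and the continuous dependence of the Hersch--Szeg\H{o} normalizing point $w(C)$ on the cap $C$ from \autoref{centerofmass} (valid here since $\alpha \leq 0$ forces $g$ strictly increasing). First I would recall that $u_{p,t}$ is shorthand for $u_{C,w(C)}$ with $C = C_{p,t}$, so that the parameter $w$ appearing in \autoref{lemma:TrialFunctionExtension} is not free but equals $w(C_{p,t})$. To apply that lemma I therefore need to know the limit of $w(C_{p,t})$ as $t \to \pm 1$ and $p \to q$.

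The key step is to identify those limits. By continuity of $C \mapsto w(C)$ on the open cylinder, any sequence of caps degenerating to a boundary configuration gives a convergent subsequence $w(C_{p_n,t_n}) \to \widetilde{w} \in \overline{\D}$; I would pass to such a subsequence and take limits in the orthogonality relation \eqref{eq:herschunique}, namely $\int_\Omega u_{p_n,t_n} f_1 \, dA = 0$. Using \autoref{lemma:TrialFunctionExtension} together with dominated convergence (the integrands are bounded by $g(1)\,|f_1| \in L^1(\Omega)$), the left side converges to $\int_\Omega (v \circ M_{\widetilde{w}} \circ B) f_1 \, dA$ for $t \to 1$, and to $\int_\Omega (v \circ M_{\widetilde{w}} \circ R_q \circ B) f_1 \, dA$ for $t \to -1$. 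Each of these must therefore vanish. Now I invoke the uniqueness clause of \autoref{centerofmass}: since $R_q \circ B : \Omega \to \D$ is again conformal (reflections are anticonformal but that does not affect the Hersch--Szeg\H{o} argument, which only uses continuity and the index computation, so one should check the lemma's hypotheses are met with $T_C$ replaced by $B$ or $R_q \circ B$), the vanishing point is unique; but $\int_\Omega (v \circ M_0 \circ B) f_1 \, dA = \int_\Omega (v \circ B) f_1 \, dA = 0$ already holds by \eqref{eq:NormalizedB}, and likewise one checks $w = 0$ works for $R_q \circ B$ by the analogous normalization. Hence $\widetilde{w} = 0$ in both cases. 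Since every subsequence has a further subsequence with limit $0$, we conclude $w(C_{p,t}) \to 0$ as $t \to \pm 1$, $p \to q$.

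With $\widetilde{w} = 0$ in hand, \autoref{lemma:TrialFunctionExtension} immediately gives $u_{p,t} = u_{C,w(C)} \to v \circ M_0 \circ B = v \circ B$ as $t \to 1$, and $u_{p,t} \to v \circ M_0 \circ R_q \circ B = v \circ R_q \circ B = R_q \circ v \circ B$ as $t \to -1$, where the last equality uses that $v = g(r)e^{i\theta}$ and $R_q$ acts as $z \mapsto -q^2 \bar z$, so precomposition with $R_q$ is the same as postcomposition with $R_q$ on the value $v$ (a short check: $v(R_q(z)) = g(|z|)e^{i\arg(-q^2\bar z)} = -q^2 \overline{g(|z|)e^{i\arg z}} = R_q(v(z))$, using that $g$ is real-valued). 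The convergence is locally uniform because it is so in \autoref{lemma:TrialFunctionExtension}.

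The main obstacle is the identification $\widetilde{w} = 0$, i.e.\ verifying that the uniqueness part of \autoref{centerofmass} applies to the limiting maps $B$ and $R_q \circ B$. One must confirm that the hypotheses of that lemma (continuity of the relevant map $\Omega \to \D$, strict monotonicity of $g$, nonnegativity of $f_1$ with positive integral) are genuinely satisfied in the limit, and in particular that anticonformality of $R_q \circ B$ causes no trouble — the Hersch--Szeg\H{o} argument is topological and uses only that $M_w$ degenerates to the constant $e^{i\phi}$ on the boundary, so orientation plays no role. Once that is granted, the rest is a routine dominated-convergence and subsequence argument.
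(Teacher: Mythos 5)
Your proposal follows essentially the same route as the paper: pass to a subsequence of the normalizing points $w(C_{p,t})$, take limits in the orthogonality relation \eqref{eq:herschunique} via \autoref{lemma:TrialFunctionExtension} and dominated convergence, then invoke the uniqueness part of \autoref{centerofmass} (with $T=B$ when $t\to 1$ and $T=R_q\circ B$ when $t\to -1$) to force $\widetilde w=0$, and finally use the commutation $v\circ R_q = R_q\circ v$. That is exactly the paper's argument. Two small remarks on wording: the convergent subsequence of $w$-values comes from compactness of $\overline{\D}$, not from continuity of $C\mapsto w(C)$; and for the $t\to -1$ case the hypotheses of \autoref{centerofmass} ask only for continuity of $T$ (not conformality), so the anticonformality of $R_q\circ B$ needs no further discussion — the only point to verify, as the paper does, is that $\int_\Omega(v\circ M_0\circ R_q\circ B)f_1\,dA = R_q\!\left(\int_\Omega(v\circ B)f_1\,dA\right) = 0$ by $\R$-linearity of $R_q$ and \eqref{eq:NormalizedB}.
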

\begin{proof}
The limiting behavior of $u_{C,w}$ as $t \to \pm 1$ and $w \to \widetilde{w}$ was determined in \autoref{lemma:TrialFunctionExtension}. Taking $w = w(C_{p,t})$, we see the task is to show $w(C_{p,t}) \to 0$ as $t \to \pm 1$, so that $\widetilde{w}=0$ and hence $M_{\widetilde{w}}=\text{id.}$\ in \autoref{lemma:TrialFunctionExtension}. That will immediately finish the proof when $t \to 1$, and when $t \to -1$ we need only observe also that $v$ and the reflection $R_q$ commute, 
\[
v \circ R_q = R_q \circ v ,
\]
because $v(R_q (re^{i\theta}))=R_q(v(re^{i\theta}))$ by a short computation using $R_q(z)=-q^2 \overline{z}$ and $v(re^{i\theta})=g(r)e^{i\theta}$. 

Part (i): $w(C_{p,t}) \to 0$ as $t \to 1$. Suppose $\{ w_k \}$ is a sequence of $w(C)$-values corresponding to some parameters $(p_k,t_k)$ with $t_k \to 1$. By passing to a subsequence we may suppose $w_k$ converges to some point $w^* \in \overline{\D}$. With the help of \autoref{lemma:TrialFunctionExtension}, boundedness of $v$ and dominated convergence, we may take the limit as $k \to \infty$ of the orthogonality condition \eqref{eq:herschunique} to find
\[
\int_\Omega (v \circ M_{w^*} \circ B) f_1 \,dA = 0.
\]
Since $\int_\Omega (v \circ M_0 \circ B) f_1 \,dA = 0$ by \eqref{eq:NormalizedB}, uniqueness in \autoref{centerofmass} (which holds since $g(r)$ is strictly increasing when $\alpha \leq 0$) implies that $w^*=0$. This holds for all sequences $\{ w_k \}$, and so $w(C) \to 0$ as $t \to 1$. 

\smallskip
Part (ii): $w(C_{p,t}) \to 0$ as $t \to -1$. Suppose $\{ w_k \}$ is a sequence of $w(C)$-values corresponding to some parameters $(p_k,t_k)$ with $t_k \to -1$ and $p_k \to q$. By passing to a subsequence we may suppose $w_k$ converges to some point $w^* \in \overline{\D}$. Again taking the limit as $k \to \infty$ of the orthogonality condition \eqref{eq:herschunique}, with the help of \autoref{lemma:TrialFunctionExtension} we find
\[
\int_\Omega (v \circ M_{w^*} \circ R_q \circ B) f_1 \, dA = 0.
\]
Also, the commutativity of $v$ and $R_q$ implies 
\[
\int_\Omega (v \circ M_0 \circ R_q \circ B) f_1 \,dA = R_q \int_\Omega (v \circ B) f_1 \,dA = 0
\]
by \eqref{eq:NormalizedB}. Uniqueness in \autoref{centerofmass} applied to the map $T=R_q \circ B$ now implies that $w^*=0$. This holds for all sequences $\{ w_k \}$, and so $w(C) \to 0$ as $t \to -1$. 
\end{proof}

\section{\bf An integral comparison}

The final ingredient needed for proving \autoref{thm:main} is an integral comparison on $\D$ and $\Omega$. Consider a trial function of the form
\[
u=u_{C,w}=v\circ (M_w\circ G_C \circ F_C \circ B) .
\]
Orthogonality is not required in this section, and so $C$ can be any cap and $w$ is any point in $\D$. 
\begin{lemma}\label{le:areacomp2}
If $\alpha \leq 4\pi$, then the radial part $g$ of the eigenfunction for $\lambda_2(\D;\alpha/4\pi)$ satisfies
\begin{equation*} \label{areabound}
\frac{1}{\pi}\int_\D |v|^2 \, dA < \frac{1}{A(\Omega)} \int_\Omega |u|^2 \, dA .
\end{equation*}
\end{lemma}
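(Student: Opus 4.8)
The plan is to reduce the claimed inequality to the quoted mean-value comparison \autoref{le:areacompquoted} by peeling off the conformally invariant pieces of the map $M_w \circ G_C \circ F_C \circ B$ one at a time. Write $u = v \circ \Psi$ where $\Psi = M_w \circ G_C \circ F_C \circ B : \Omega \to \D$. The right-hand side is $\frac{1}{A(\Omega)}\int_\Omega (g \circ |\Psi|)^2\,dA$, since $|v| = g(|\cdot|)$. First I would use the change of variables on each of the two ``sheets'' created by the fold map $F_C$: the map $G_C \circ F_C \circ B$ sends $\Omega$ onto $\D$, but in a two-to-one fashion, with the preimage $B^{-1}(\gamma)$ of the geodesic being a measure-zero exceptional set. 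Splitting $\Omega$ into $B^{-1}(C)$ and $B^{-1}(C^\star)$ and using that $\tau_C$ is an isometry of the Poincaré metric (but not of the Euclidean one), the integral $\int_\Omega (g\circ|\Psi|)^2\,dA$ is at least the integral over the single sheet $B^{-1}(C)$, on which $F_C = \mathrm{id}$; thus
\[
\int_\Omega (g\circ|\Psi|)^2\,dA > \int_{B^{-1}(C)} (g \circ |M_w \circ G_C \circ B|)^2 \, dA ,
\]
with strict inequality because $g^2$ is strictly positive on the other sheet, which has positive area. (Here I am using $g(r) > 0$ for $r \in (0,1)$ and $g(1) > 0$ from \autoref{basic2}.)

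Next, $B^{-1}(C)$ is a simply-connected subdomain of $\Omega$, and $h := (M_w \circ G_C \circ B)\big|_{B^{-1}(C)} : B^{-1}(C) \to \D$ is a conformal map onto the disk (since $G_C = K_C^{-1}$ is conformal from $C$ onto $\D$, $M_w$ is a disk automorphism, and $B$ is conformal). So $B^{-1}(C)$ is a simply-connected planar domain of finite area carrying a conformal map to $\D$, and \autoref{le:areacompquoted} applies with $\Omega$ there replaced by $B^{-1}(C)$ and the hypothesis $\alpha \leq 1$ there matching the hypothesis $\alpha/4\pi \leq 1$, i.e. $\alpha \leq 4\pi$, here. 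That lemma gives
\[
\frac{1}{\pi}\int_\D g^2 \, dA \leq \frac{1}{A(B^{-1}(C))} \int_{B^{-1}(C)} (g \circ h)^2 \, dA ,
\]
and since $A(B^{-1}(C)) < A(\Omega)$ (the other sheet has positive area), combining the two displays yields
\[
\frac{1}{\pi}\int_\D |v|^2\,dA = \frac{1}{\pi}\int_\D g^2\,dA \leq \frac{1}{A(B^{-1}(C))}\int_{B^{-1}(C)}(g\circ h)^2\,dA < \frac{1}{A(\Omega)}\int_\Omega |u|^2\,dA ,
\]
which is the assertion. (Even if $B^{-1}(C)$ happened to be a disk, making \autoref{le:areacompquoted} non-strict, the strict drop from $A(B^{-1}(C))$ to $A(\Omega)$ still gives strict inequality.)

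The main obstacle I anticipate is the careful bookkeeping of the two-to-one fold: one must justify that $B^{-1}(\gamma)$ has measure zero (it is a real-analytic arc, being the conformal image of a circular arc), that the change-of-variables formula is valid on each open sheet despite $G_C$ being defined only on the cap and $K_{p,t}$ extending merely continuously to $\overline{\D}$, and that $B^{-1}(C)$ is genuinely a proper subdomain with the complementary sheet having strictly positive area — this last point uses that the geodesic $\gamma$ does not fill $\D$, so $C^\star$ has nonempty interior, hence $B^{-1}(C^\star)$ has positive area since $B$ is a conformal (hence open, measure-nonzero) map. A secondary point is checking that $h$ lands exactly on $\D$ and is conformal up to the boundary enough for the area integral to make sense, but this is immediate from the conformality of each factor on the open domain, which is all \autoref{le:areacompquoted} requires.
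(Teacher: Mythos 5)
Your proof breaks down at the final combining step. You establish (correctly) that
\[
\frac{1}{\pi}\int_\D g^2\,dA \;\leq\; \frac{1}{A(B^{-1}(C))}\int_{B^{-1}(C)}(g\circ h)^2\,dA,
\]
and that the numerator on the right is strictly less than $\int_\Omega |u|^2\,dA$, while the denominator $A(B^{-1}(C))$ is strictly less than $A(\Omega)$. But both the numerator and the denominator increase when you pass from $B^{-1}(C)$ to $\Omega$, and these changes push the ratio in opposite directions. The claimed inequality
\[
\frac{1}{A(B^{-1}(C))}\int_{B^{-1}(C)}(g\circ h)^2\,dA \;<\; \frac{1}{A(\Omega)}\int_\Omega |u|^2\,dA
\]
is exactly the statement that the mean of $|u|^2$ over $B^{-1}(C)$ is less than its mean over all of $\Omega$, which in turn (by the mediant inequality) is equivalent to the mean over $B^{-1}(C)$ being less than the mean over $B^{-1}(C^\star)$. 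Nothing in the construction forces this: the two caps are not symmetric in any useful way, and the mean of $|u|^2$ could perfectly well be larger on the $C$-sheet. So the phrase ``since $A(B^{-1}(C)) < A(\Omega)$'' does not justify the step; if anything a smaller denominator makes the ratio bigger, not smaller.

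The repair is to treat the two sheets symmetrically rather than discarding one. On $B^{-1}(C^\star)$ the fold is an anticonformal hyperbolic reflection, so $k := \overline{M_w}\circ G_C\circ F_C\circ B$ is a conformal map of $B^{-1}(C^\star)$ onto $\D$, and $|g\circ k| = |u|$ there. Applying \autoref{le:areacompquoted} to each sheet gives
\[
\frac{A(B^{-1}(C))}{\pi}\int_\D g^2\,dA \;\leq\; \int_{B^{-1}(C)}(g\circ h)^2\,dA,
\qquad
\frac{A(B^{-1}(C^\star))}{\pi}\int_\D g^2\,dA \;\leq\; \int_{B^{-1}(C^\star)}(g\circ k)^2\,dA,
\]
and \emph{adding} these makes the areas sum to $A(\Omega)$ and the right sides sum to $\int_\Omega|u|^2\,dA$, giving the non-strict inequality. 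Strictness then comes not from your ``dropping a sheet of positive area'' argument but from the equality case of \autoref{le:areacompquoted}: at least one of $B^{-1}(C)$, $B^{-1}(C^\star)$ is not a disk, since otherwise $\Omega$ would be their disjoint union and hence disconnected.
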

This result is similar to \autoref{le:areacompquoted}, except here $u$ and $v$ are related by a two-to-one map, whereas in the earlier lemma the map was one-to-one.
\begin{proof}
Define conformal maps 
\[
h :  B^{-1}(C) \to \D \qquad \text{and} \qquad k : B^{-1}(C^\star) \to \D 
\]
by letting $h = M_w \circ G_C \circ F_C \circ B$ on $B^{-1}(C)$ and letting $k = \overline{M_w} \circ G_C \circ F_C \circ B$ on $B^{-1}(C^\star)$ (see \autoref{fig:Trial}). The conformality of $h$ is clear, since the fold map is the identity on $C$. The fold is an anticonformal hyperbolic reflection on $C^\star$, but that effect is counteracted by the complex conjugate on $M_w$ in the definition of $k$, and so $k$ is conformal. 

Applying \autoref{le:areacompquoted} to $h$ and $k$ on their domains shows that 
\begin{align*}
\frac{A\left( B^{-1}(C) \right)}{\pi} \int_\D g^2 \, dA & \leq \int_\Omega (g \circ h)^2 \, dA , \\
\frac{A\left( B^{-1}(C^\star) \right)}{\pi} \int_\D g^2 \, dA & \leq \int_\D (g \circ k)^2 \, dA .
\end{align*}
At least one of these inequalities is strict by \autoref{le:areacompquoted}, since if $B^{-1}(C)$ and $B^{-1}(C^\star)$ were both disks then the domain $\Omega$, which is their union, would be disconnected. Adding the two inequalities now proves \autoref{le:areacomp2}.  
\end{proof}

\section{\bf Proof of inequality in \autoref{thm:main}}
\label{sec:main}

By scaling the domain $\Omega$ we may assume it has area $A=2\pi$. The goal is to prove
\[
\lambda_3(\Omega;\alpha/L) < \lambda_3(\D\sqcup\D;\alpha/4\pi) = \lambda_2(\D;\alpha/4\pi)
\]
when $\alpha \in [-4\pi,0]$.

\textbf{Case 1.} 
Suppose $\int_\Omega (v \circ B) f_2 \, dA = 0$. Recall $v$ is an eigenfunction for $\lambda_2(\D;\alpha/4\pi)$, and so it satisfies
\begin{equation} \label{eq:diskequation}
\lambda_2 \! \left(\D;\frac{\alpha}{4\pi}\right)\int_{\D} |v|^2 \, dA = \int_{\D}|\nabla v|^2 \, dA +\frac{\alpha}{2} g(1)^2 ,
\end{equation}
where we used in the boundary term that $|v|=g(1)$ on $\partial \D$. 

Take the trial function for $\Omega$ to be $u=v \circ B$, which is orthogonal to $f_1$ by \eqref{eq:NormalizedB} and orthogonal to $f_2$ by assumption in this Case. The variational characterization \eqref{eq:varchar} applied to $u$ gives that 
\begin{align*}
  \lambda_3 \! \left(\Omega;\frac{\alpha}{L}\right)\int_{\Omega} |u|^2 \,dA
&\leq
\int_\Omega |\nabla u|^2 \,dA+\frac{\alpha}{L}\int_{\partial\Omega} |u|^2 \,ds .
\end{align*}
Conformal invariance of the Dirichlet integral says that $\int_\Omega |\nabla u|^2 \,dA = \int_\D |\nabla v|^2 \, dA$, and $|u|=g(1)$ on $\partial \Omega$ since $B$ maps $\partial \Omega$ to $\partial \D$. So
\begin{align}
  \lambda_3 \! \left(\Omega;\frac{\alpha}{L}\right)\int_{\Omega} |u|^2 \,dA
& \leq \int_\D |\nabla v|^2 \,dA+\alpha g(1)^2 \notag \\
& < 2\int_\D |\nabla v|^2 \,dA+\alpha g(1)^2 \notag \\
& = 2 \lambda_2 \! \left(\D;\frac{\alpha}{4\pi}\right) \int_{\D} |v|^2 \, dA \label{ineq:beautiful2}
\end{align}
by \eqref{eq:diskequation}. Since $\Omega$ has area $2\pi$, \autoref{le:areacompquoted} implies that $2\int_{\D}|v|^2\,dA \leq \int_\Omega |u|^2 \, dA$.
This inequality can be substituted into the right side of \eqref{ineq:beautiful2} since $\lambda_2(\D;\alpha/4\pi) \geq 0$ (remember $\alpha/4\pi \geq -1$). Hence $\lambda_3(\Omega;\alpha/L) < \lambda_2(\D;\alpha/4\pi)$, which gives strict inequality in the theorem. 

\smallskip
\noindent 
\textbf{Case 2.} Suppose $\int_\Omega (v \circ B) f_2 \, dA \neq 0$. Then by \autoref{propo:existenceCap} (which requires $\alpha \leq 0$) a hyperbolic cap $C=C_{p,t}$ exists such that the trial function $u=u_{C,w(C)}:\Omega\rightarrow\C$ is orthogonal to the eigenfunctions $f_1$ and $f_2$ on $\Omega$, as in \eqref{eq:herschunique} and \eqref{eq:herschunique2}.
Hence by the variational characterization \eqref{eq:varchar}, 
\begin{align*}
  \lambda_3 \! \left(\Omega;\frac{\alpha}{L}\right)\int_{\Omega} |u|^2 \,dA
&\leq
\int_\Omega |\nabla u|^2 \,dA+\frac{\alpha}{L}\int_{\partial\Omega} |u|^2 \,ds .
\end{align*}
The Dirichlet integral on the right side splits into two parts, corresponding to the cap and the complementary cap, with 
\begin{align}
\int_\Omega |\nabla u|^2 \,dA
& =\int_{B^{-1}(C)} |\nabla (v\circ M_{w(C)}\circ G_{C} \circ B)|^2 \,dA \notag \\
& \quad +\int_{B^{-1}(C^\star)} |\nabla (v\circ M_{w(C)}\circ G_{C} \circ \tau_C \circ B)|^2 \,dA \notag \\
& = 2 \int_\D |\nabla v|^2 \, dA \label{eq:conformalinv}
\end{align}
by conformal invariance of the Dirichlet energy. Also, note that $|u(z)|=g(1)$ when $z\in \partial \Omega$, since the conformal maps take boundaries to boundaries. Hence
\begin{equation}\label{ineq:endgame}
  \lambda_3 \! \left(\Omega;\frac{\alpha}{L}\right)\int_{\Omega} |u|^2 \,dA
\leq
2\int_\D |\nabla v|^2 \,dA+\alpha g(1)^2.
\end{equation}
Applying identity \eqref{eq:diskequation} to the right side of \eqref{ineq:endgame}, we find
\begin{equation} \label{eq:beautiful}
\lambda_3 \! \left(\Omega;\frac{\alpha}{L}\right)\int_{\Omega} |u|^2 \,dA
\leq
2 \lambda_2 \! \left(\D;\frac{\alpha}{4\pi}\right) \int_{\D} |v|^2 \, dA.
\end{equation}
Since $\Omega$ has area $2\pi$, \autoref{le:areacomp2} implies that 
$$2\int_{\D}|v|^2\,dA < \int_\Omega |u|^2 \, dA.$$ 
Applying this inequality on the right side of \eqref{eq:beautiful} gives 
$$
\lambda_3 \! \left(\Omega;\frac{\alpha}{L}\right)\int_{\Omega} |u|^2 \, dA
<
\lambda_2 \! \left(\D;\frac{\alpha}{4\pi}\right) \int_{\Omega} |u|^2 \, dA$$
when $-4\pi < \alpha \leq 0$, where we used that $\lambda_2(\D;\alpha/4\pi)> 0$ for $\alpha > -4\pi$. Hence
$$\lambda_3 \! \left(\Omega;\frac{\alpha}{L}\right)
<
\lambda_2 \! \left(\D;\frac{\alpha}{4\pi}\right) .$$

It remains to handle $\alpha=-4\pi$. In that case $g(r)=r$ and $\lambda_2(\D;\alpha/4\pi)=0$, and so $\lambda_3(\Omega;-4\pi/L) \leq 0$ by \eqref{eq:beautiful}.
The inequality is strict, as follows. If equality held then equality would hold also in \eqref{ineq:endgame}, and so our trial function $u$ would actually be an eigenfunction for $\lambda_3(\Omega;-4\pi/L)$, and hence by elliptic regularity $u$ would be smooth on $\Omega$. Then $u \circ B^{-1} = v \circ (M_{w(C)} \circ G_C \circ F_C)$ would be smooth on $\D$, which in view of the fold map $F_C$ must mean that the normal derivative of $u \circ B^{-1}$ vanishes along the geodesic $\gamma$. Hence $v=re^{i\theta}$ has vanishing normal derivative along part of $\partial \D$, which is obviously false. This contradiction completes the proof that the inequality in the theorem is strict when $\alpha=-4\pi$.

\section{\bf Saturation in \autoref{thm:main}}
\label{sec:saturation}

To prove the inequality \eqref{ineq:main} in \autoref{thm:main} is asymptotically sharp, or saturates, we will show equality holds in the limit for the domains 
\[
\Omega_\e =  (\D - 1 + \e) \cup (\D + 1 - \e) 
\]
that approach the double disk
\[
\Omega_0 =  (\D - 1) \cup (\D + 1) 
\]
as $\e \to 0$.
\begin{figure}
  \includegraphics[scale=0.09]{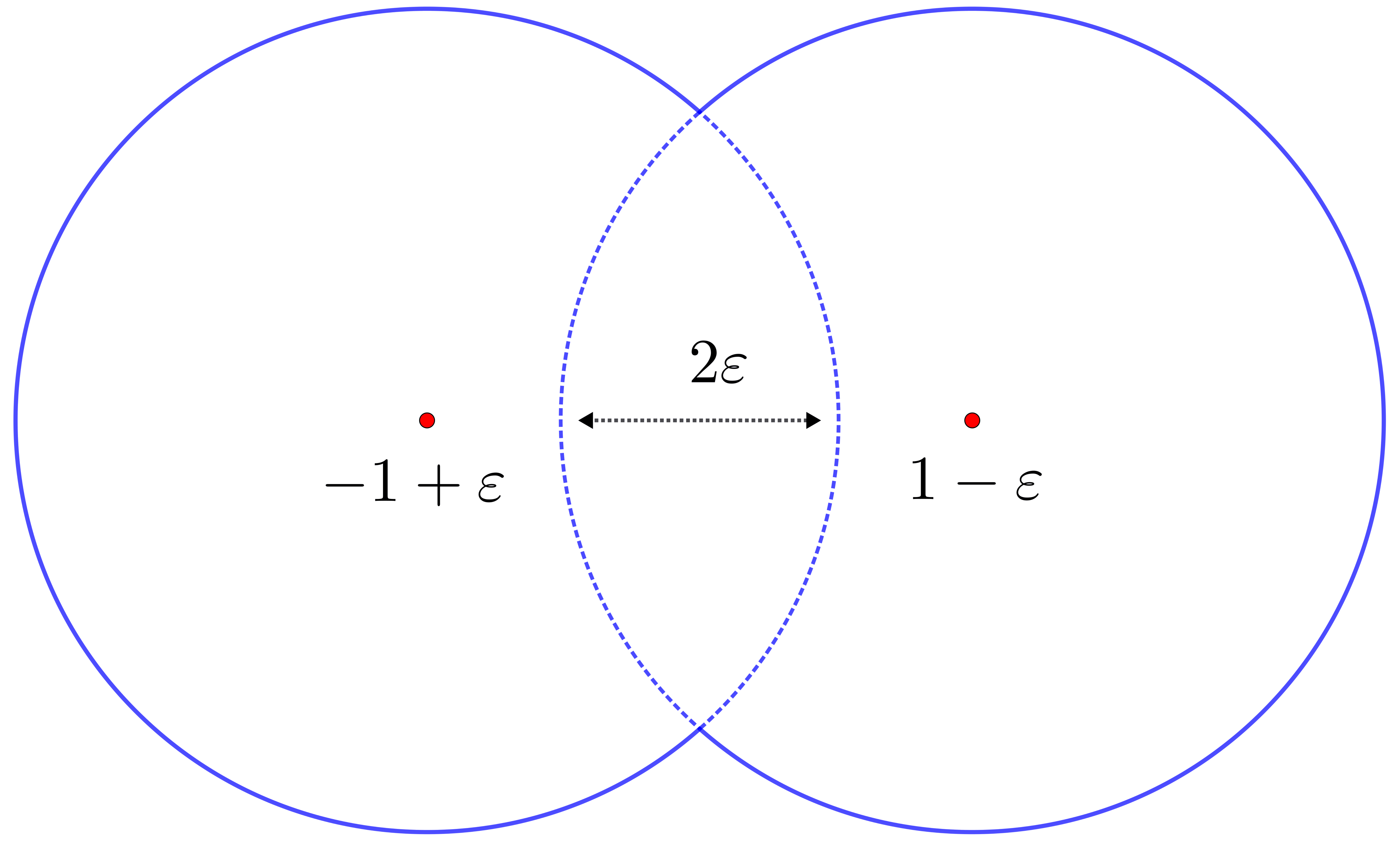}
  \caption{The domain $\Omega_\e=(\D - 1 + \e) \cup (\D + 1 - \e)$ that ``pulls apart'' to a union of two disks, as $\e \to 0$.}
\end{figure}
The double disk is not technically Lipschitz, due to its two-sided boundary point at the origin, but that obstruction to defining the spectrum can be avoided just by moving the disks farther apart. Let $A(\e)$ and $L(\e)$ be the area of $\Omega_\e$ and the length of its boundary, respectively, so that 
\[
\lambda_3(\Omega_\e;\alpha/L(\e)) A(\e) < \lambda_3(\Omega_0;\alpha/4\pi)2\pi
\]
by \autoref{thm:main}. 

Since $A(\e) \to 2\pi$ as $\e \to 0$, saturation in \autoref{thm:main} will follow from proving 
\begin{equation}\label{ineq:mainsharp}
 \liminf_{\e \to 0} \lambda_j(\Omega_\e;\alpha/L(\e)) \geq \lambda_j(\Omega_0;\alpha/4\pi) 
\end{equation}
for each $j \geq 1$ and $\alpha \in \R$. (We need the case $j=3$ and $\alpha \in [-4\pi,0]$.)

The idea is to compare the Robin spectrum on $\Omega_\e$ with the Robin spectrum on a weighted double disk, by ``pulling apart'' the values of the trial function and multiplying by weight $1/2$ in the ``overlap''  region. So construct a linear transformation 
\[
T_\e : H^1(\Omega_\e) \to H^1(\Omega_0)
\]
by 
\[
T_\e u(z) = 
\begin{cases}
u(z+\e) & \text{when $z \in \D-1$,} \\
u(z-\e) & \text{when $z \in \D+1$,} 
\end{cases}
\]
where $u \in H^1(\Omega_\e)$ is arbitrary. 
Let 
\[
\Theta_\e = \left[ (\D - 1) \cap (\D + 1 - 2\e) \right] \cup \left[ (\D + 1) \cap (\D - 1 + 2\e) \right] ,
\]
so that $\Theta_\e \subset \Omega_0$ and $\Theta_\e$ shrinks toward the origin as $\e \to 0$.  Define interior and boundary weights on the double disk by 
\[
\rho_\e = 
\begin{cases}
1 & \text{in $\Omega_0 \setminus \Theta_\e$} \\
1/2 & \text{in $\Omega_0 \cap \Theta_\e$}
\end{cases} ,
\qquad 
\beta_\e = 
\begin{cases}
1 & \text{in $\partial \Omega_0 \setminus \partial \Theta_\e$} \\
0 & \text{in $\partial \Omega_0 \cap \partial \Theta_\e$}
\end{cases} .
\]
\begin{figure}
  \includegraphics[scale=0.09]{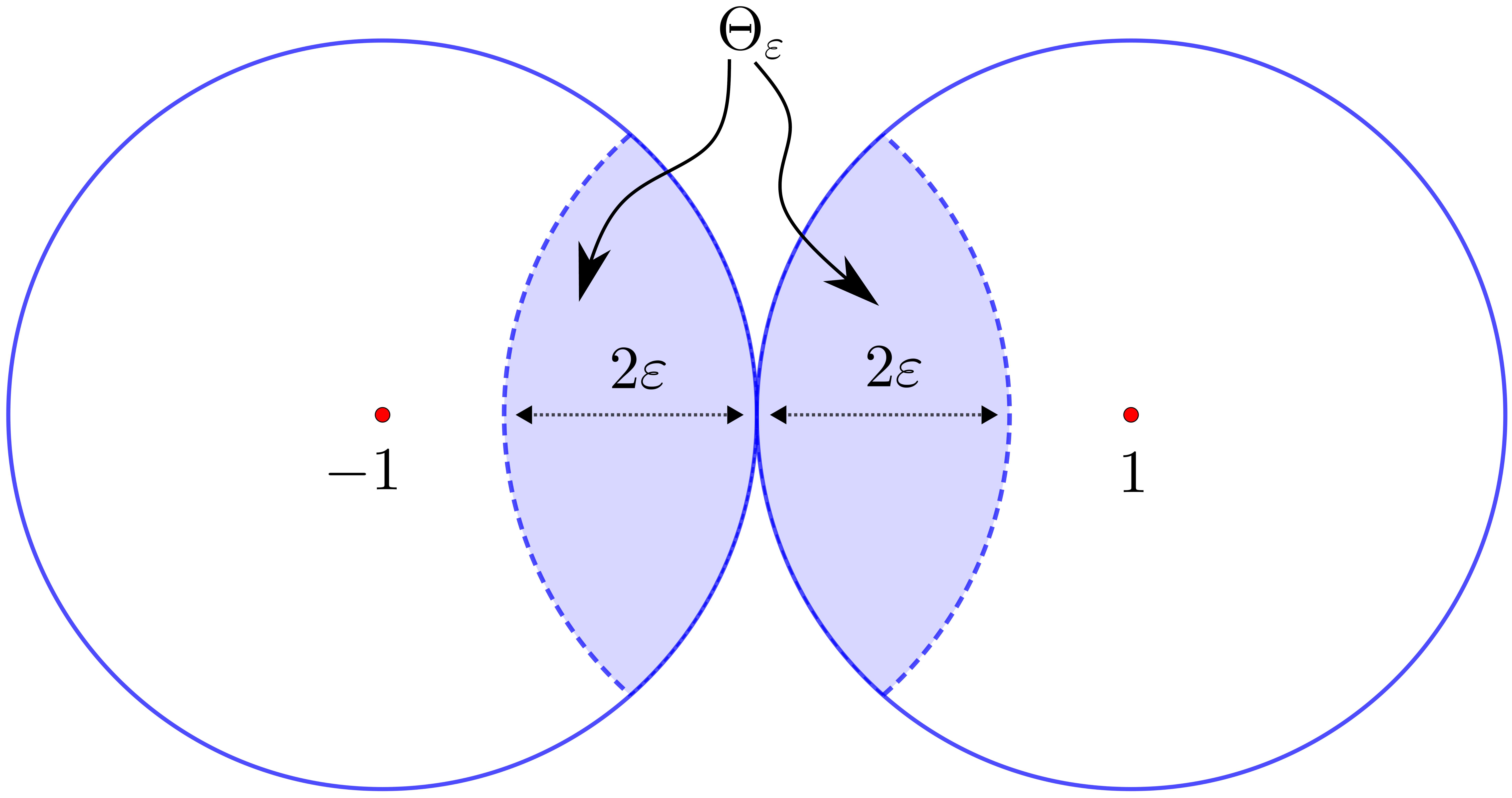}
\caption{\label{fig:Omega0} The double disk $\Omega_0$, and its subdomain $\Theta_\e$.}
\end{figure}

In terms of these weights, the Rayleigh quotient of $u \in H^1(\Omega_\e)$ pulls apart to 
\begin{align}
Q_{\alpha/L(\e)}(u) 
& = \frac{\int_{\Omega_\e} |\nabla u|^2 \, dA + (\alpha/L(\e)) \int_{\partial\Omega_\e} |u|^2 \, ds}{\int_{\Omega_\e}|u|^2 \,dA} \notag \\
& = \frac{\int_{\Omega_0} |\nabla (T_\e u)|^2 \, \rho_\e \, dA + (\alpha/L(\e)) \int_{\partial\Omega_0} |T_\e u|^2 \, \beta_\e \, ds}{\int_{\Omega_0}|T_\e u|^2 \, \rho_\e \,dA} \notag \\
& \equiv Q(v;\rho_\e,\alpha \beta_\e/L(\e)) \label{eq:Qrelation}
\end{align}
where $v=T_\e u \in H^1(\Omega_0)$. 

Therefore the minimax characterization of the $j$th eigenvalue, with $U$ ranging over $j$-dimensional subspaces of $H^1(\Omega_\e)$ and $V$ ranging over $j$-dimensional subspaces of $H^1(\Omega_0)$, implies that
\begin{align}
\lambda_j(\Omega_\e;\alpha/L(\e)) 
& = \min_U \max_{u \in U} Q_{\alpha/L(\e)}(u) \notag \\
& = \min_U \max_{v \in T_\e(U)} Q(v; \rho_\e,\alpha \beta_\e/L(\e)) \quad \text{by \eqref{eq:Qrelation}} \notag \\
& \geq \min_V \max_{v \in V} Q(v; \rho_\e,\alpha \beta_\e/L(\e)) \notag \\
& = \lambda_j(\Omega_0;\rho_\e,\alpha \beta_\e/L(\e)) , \label{eq:lambda_ineq}
\end{align}
where the inequality relies on $T_\e(U) = \{ T_\e u : u \in U\}$ being a $j$-dimensional subspace of $H^1(\Omega_0)$, which is easily checked. 

Since $L(\e) \to 4\pi$ and $\rho_\e \to 1$ pointwise on $\Omega_0$ and $\beta_\e \to 1$ pointwise on $\partial \Omega_0$, with $1/2 \leq \rho_\e \leq 1$ and $|\beta_\e| \leq 1$ for all $\e$, we conclude from  \autoref{pr:spectralconvergence} in the Appendix that
\begin{equation*} \label{eq:formal}
\lim_{\e \to 0} \lambda_j(\Omega_0;\rho_\e,\alpha \beta_\e/L(\e)) = \lambda_j(\Omega_0;1,\alpha/4\pi) .
\end{equation*}
This limit, together with \eqref{eq:lambda_ineq}, proves inequality \eqref{ineq:mainsharp}.

\section{\bf Proof of \autoref{cor:steklov}}
\label{sec:steklov}

Notice $\sigma$ belongs to the Steklov spectrum exactly when $0$ belongs to the Robin spectrum for parameter value $\alpha=-\sigma$. Further, since the Robin eigenvalues are increasing with respect to $\alpha$ and the Steklov spectrum is discrete, each Robin eigenvalue $\lambda_j(\Omega;\alpha)$ can equal $0$ for at most one value of $\alpha$. 

One has that 
\begin{align*}
\lambda_2 (\Omega;-4\pi/L) A 
& \leq \lambda_3(\Omega;-4\pi/L) A \\
& < \lambda_3(\D\sqcup\D;-1) 2\pi \quad \text{by  \autoref{thm:main} with $\alpha=-4\pi$} \\
& = \lambda_2(\D;-1) 2\pi \\
& = 0 
\end{align*}
by a formula in \autoref{basic2}. Also, 
\[
\lambda_3(\Omega;0) \geq \lambda_2(\Omega;0) > \lambda_1(\Omega;0) = 0 .
\]
Since the Robin eigenvalues vary continuously with $\alpha$, the preceding observations imply there must exist values $-4\pi < \alpha_3 \leq \alpha_2 < 0$ for which $\lambda_3(\Omega;\alpha_3/L) = 0$ and $\lambda_2(\Omega;\alpha_2/L) = 0$. It follows that $-\alpha_3/L$ is the second positive Steklov eigenvalue $\sigma_2(\Omega)$, and so  
\[
\sigma_2(\Omega) L(\Omega) = -\alpha_3 < 4\pi , 
\] 
which is the desired inequality. 

The asymptotic equality statement from \autoref{thm:main} implies that if $-4\pi < \alpha \leq 0$ then the domain $\Omega_\e = (\D - 1 + \e)\cup (\D + 1 - \e)$ satisfies
\begin{align*}
\lim_{\e \to 0} \lambda_3\left( \Omega_\e;\alpha/L(\Omega_\e) \right) A(\Omega_\e) 
& = \lambda_2(\D;\alpha/4\pi) 2\pi \\
& > \lambda_2(\D;-1) 2\pi \\
& = 0 .
\end{align*}
Hence whenever $\e$ is sufficiently close to $0$, the number $\alpha_3=\alpha_3(\e)$ corresponding to the domain $\Omega_\e$ satisfies $-4\pi<\alpha_3(\e) < \alpha$. Since $\alpha$ can be chosen arbitrarily close to $-4\pi$, we conclude $\alpha_3(\e) \to -4\pi$ as $\e \to 0$. That is, $\sigma_2(\Omega_\e) L(\Omega_\e) \to 4\pi$ as $\e \to 0$.

\appendix
\section{\bf Robin spectrum --- existence and convergence}
\label{sec:appendix}

Existence and convergence results on the Robin spectrum, with weight functions bounded above and below, are needed for proving asymptotic sharpness (saturation) of \autoref{thm:main}. We state these results in $n$ dimensions, since they apply equally well there. Write $\partial_\nu u$ for the outward normal derivative of $u$ at the boundary.
\begin{proposition}[Existence of Robin eigenvalues]\label{pr:spectralexistence}
Suppose $\Omega \subset \Rn$ is a bounded Lipschitz open set. If the weight functions $\rho : \Omega \to \R, \omega : \Omega \to \R$ and $\beta : \partial \Omega \to \R$ are measurable with
\[
a^{-1} \leq \rho \leq a , \qquad a^{-1} \leq \omega \leq a , \qquad |\beta| \leq a ,
\]
for some $a>1$, then there exist functions \text{$u_1,u_2,u_3,\ldots \in
H^1(\Omega)$} and numbers
\[
\lambda_1 \leq \lambda_2 \leq \lambda_3 \leq \cdots \to \infty
\]
such that $\{ u_j \}$ is an orthonormal basis for $L^2(\omega \, dx)$ and $u_j$ is a Robin eigenfunction with eigenvalue $\lambda_j$, meaning
\[
\begin{split}
\nabla \cdot (\rho \nabla u) + \lambda \omega u & = 0 \quad \text{in $\Omega$,} \\
\rho \partial_\nu u + \beta u & = 0 \quad \text{on $\partial \Omega$,} 
\end{split}
\]
in the weak sense with respect to test functions in $H^1(\Omega)$. 

Further, the decomposition $f = \sum_j \langle f , u_j \rangle_{L^2(\omega \, dx)} \, u_j$ holds with convergence in $L^2(\Omega)$ for each $f \in L^2(\Omega)$, and holds with convergence in $H^1(\Omega)$ for each $f \in H^1(\Omega)$.
\end{proposition}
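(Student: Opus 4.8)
The plan is to realize the weighted Robin eigenvalue problem as the spectral problem for a compact, self-adjoint, positive operator on the weighted space $L^2(\omega\,dx)$, and then to read off the two eigenfunction expansions from the spectral theorem. First I would introduce the symmetric sesquilinear form
\[
a(u,v) = \int_\Omega \rho \, \nabla u \cdot \nabla \overline{v} \, dx + \int_{\partial\Omega} \beta \, u \overline{v} \, ds , \qquad u,v \in H^1(\Omega),
\]
together with the inner product $\langle u,v\rangle_\omega = \int_\Omega \omega\, u \overline{v}\, dx$. The form $a$ is bounded on $H^1(\Omega)$, using continuity of the trace map $H^1(\Omega)\to L^2(\partial\Omega)$ (valid since $\Omega$ is bounded Lipschitz) and the bound $|\beta|\le a$. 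It need not be coercive, since $\beta$ may be negative; but the trace inequality $\|u\|_{L^2(\partial\Omega)}^2 \le \epsilon\|\nabla u\|_{L^2(\Omega)}^2 + C_\epsilon\|u\|_{L^2(\Omega)}^2$, applied with $\epsilon$ small, produces a constant $\Lambda>0$ (depending only on $a$ and $\Omega$) for which the shifted form $a_\Lambda(u,v):=a(u,v)+\Lambda\langle u,v\rangle_\omega$ is both bounded and coercive on $H^1(\Omega)$, hence an inner product equivalent to the standard one. Because $a^{-1}\le\omega\le a$, the norm of $L^2(\omega\,dx)$ is equivalent to that of $L^2(\Omega)$, so the two spaces coincide as sets.

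Next, by the Lax--Milgram lemma (equivalently, Riesz representation for $a_\Lambda$) there is a bounded linear solution operator $S:L^2(\omega\,dx)\to H^1(\Omega)$, $f\mapsto u$, characterized by $a_\Lambda(u,\phi)=\langle f,\phi\rangle_\omega$ for all $\phi\in H^1(\Omega)$. Composing $S$ with the inclusion $H^1(\Omega)\hookrightarrow L^2(\omega\,dx)$, which is compact by the Rellich--Kondrachov theorem on bounded Lipschitz domains, yields a compact operator on $L^2(\omega\,dx)$; it is self-adjoint because $a_\Lambda$ is conjugate-symmetric and pairs with $\langle\cdot,\cdot\rangle_\omega$, it is positive because $a_\Lambda$ is coercive, and it is injective. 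The spectral theorem then supplies an orthonormal basis $\{u_j\}_{j\ge1}$ of $L^2(\omega\,dx)$ with $Su_j=\mu_j u_j$, $\mu_j>0$, $\mu_j\to0$; relabelling so $\mu_1\ge\mu_2\ge\cdots$ and setting $\lambda_j=\mu_j^{-1}-\Lambda$ gives $\lambda_1\le\lambda_2\le\cdots\to\infty$, and unwinding the definition of $S$ shows $a(u_j,\phi)=\lambda_j\langle u_j,\phi\rangle_\omega$ for all $\phi\in H^1(\Omega)$, which is precisely the weak form of the stated problem (the boundary condition $\rho\partial_\nu u_j+\beta u_j=0$ appearing as the natural boundary condition). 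The $L^2$-convergent expansion $f=\sum_j\langle f,u_j\rangle_\omega u_j$ is then immediate from $\{u_j\}$ being an orthonormal basis of $L^2(\omega\,dx)=L^2(\Omega)$.

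For the $H^1$-convergence I would observe that $a_\Lambda(u_j,u_k)=\mu_j^{-1}\langle u_j,u_k\rangle_\omega=\mu_j^{-1}\delta_{jk}$, so $\{\sqrt{\mu_j}\,u_j\}$ is an orthonormal system in the Hilbert space $(H^1(\Omega),a_\Lambda)$; it is complete there because any $\phi\in H^1(\Omega)$ with $a_\Lambda(\phi,u_j)=0$ for all $j$ satisfies $\langle\phi,u_j\rangle_\omega=\mu_j\,a_\Lambda(\phi,u_j)=0$ for all $j$, forcing $\phi=0$. Expanding $f\in H^1(\Omega)$ in this basis gives $f=\sum_j\mu_j\,a_\Lambda(f,u_j)\,u_j=\sum_j\langle f,u_j\rangle_\omega\,u_j$ with convergence in the $a_\Lambda$-norm, hence in $H^1(\Omega)$, and the coefficients match the $L^2(\omega)$-Fourier coefficients as required. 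I expect the main obstacle to be the non-coercivity of $a$ caused by a possibly negative boundary weight $\beta$; this is overcome exactly by the compactness of the trace operator on Lipschitz domains, which lets the boundary term be absorbed into a small multiple of the Dirichlet energy plus a large multiple of the $L^2$ norm. Everything else is a routine application of the spectral theorem for compact self-adjoint operators.
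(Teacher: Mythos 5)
Your proof is correct and follows essentially the same route as the paper: establish coercivity of the shifted form $a_\Lambda = a + \Lambda\langle\cdot,\cdot\rangle_\omega$ on $H^1(\Omega)$ via a trace inequality on the Lipschitz domain, then invoke spectral theory for symmetric sesquilinear forms. The only difference is presentational — the paper compresses your Lax--Milgram / Rellich--Kondrachov / compact-self-adjoint-operator argument into a citation of the ``discrete spectral theorem for sesquilinear forms'' in Blanchard--Br\"uning, whereas you unpack that black box explicitly (and give a cleaner explicit treatment of the $H^1$-convergence of the eigenfunction expansion via orthonormality of $\{\sqrt{\mu_j}\,u_j\}$ in $(H^1(\Omega),a_\Lambda)$, which the paper leaves to the cited reference).
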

\begin{proof}
The Rayleigh quotient for this problem is
\[
u \mapsto \frac{\int_{\Omega}|\nabla u|^2 \, \rho \,dx + \int_{\partial\Omega}|u|^2 \, \beta \, dS}{\int_{\Omega} |u|^2 \, \omega \, dx}.
\]
The denominator of the Rayleigh quotient is comparable to $\int_\Omega |u|^2 \, dx$, due to the upper and lower bounds on the weight $\omega$.

The numerator is coercive on $H^1(\Omega)$ after adding a large multiple of $\int_\Omega |u|^2 \, \omega \, dx$, since 
 \begin{align*}
 & \int_\Omega |\nabla u|^2 \rho \, dx + \int_{\partial \Omega} |u|^2 \beta \, dS \\
& \geq a^{-1} \int_\Omega |\nabla u|^2 \, dx - a \int_{\partial \Omega} |u|^2 \, dS \qquad \text{by the bounds on $\rho$ and $\beta$} \\
   & \geq a^{-1} \int_\Omega |\nabla u|^2 \, dx - C_1 \left( \int_\Omega |\nabla u| |u| \, dS + \int_\Omega |u|^2 \, dx \right) \qquad \text{by \eqref{eq:traceconsequence} below} \\
& \geq (a^{-1}/2) \lVert u \rVert_{H^1(\Omega)}^2 - C_2 \int_\Omega |u|^2 \, \omega \, dx
 \end{align*}
 by Cauchy-with-$\epsilon$.
Here the constants $C_1,C_2>0$ depend on $a$, and also on $\Omega$ through the bound 
\begin{equation}
\int_{\partial \Omega} |u|^2 \, dS \leq C \left( \int_\Omega \lvert \nabla u \rvert \lvert u \rvert \, dx + \int_\Omega |u|^2 \, dx \right) \label{eq:traceconsequence}
\end{equation}
that can be found in the proof of the trace theorem \cite[{\S}4.3]{EvansGariepy}.
 
 The proposition now follows from the discrete spectral theorem for sesquilinear forms \cite[Section 6.3]{BlanchardBruning}.
\end{proof}

If the weight functions converge pointwise then the spectrum should converge too. That is the content of the next proposition. In order to emphasize the dependence on the weights, we write $\lambda_j(\rho,\omega,\beta)$ for the eigenvalues constructed in \autoref{pr:spectralexistence}. 
\begin{proposition}[Convergence of Robin eigenvalues]\label{pr:spectralconvergence}
Suppose $\Omega \subset \Rn$ is a bounded Lipschitz domain, and $\rho_\e : \Omega \to \R, \omega_\e : \Omega \to \R$  and $\beta_\e : \partial \Omega \to \R$ are measurable for each $\e \geq 0$ with
\[
a^{-1} \leq \rho_\e \leq a , \qquad a^{-1} \leq \omega_\e \leq a , \qquad |\beta_\e| \leq a ,
\]
for some $a>1$. If $\rho_\e \to \rho_0, \omega_\e \to \omega_0$ and $\beta_\e \to \beta_0$ pointwise as $\e \to 0$ then 
\[
\lambda_j(\rho_\e,\omega_\e,\beta_\e) \to \lambda_j(\rho_0,\omega_0,\beta_0)
\]
as $\e \to 0$, for each $j \geq 1$.
\end{proposition}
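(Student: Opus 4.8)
The plan is to establish the convergence $\lambda_j(\rho_\e,\omega_\e,\beta_\e) \to \lambda_j(\rho_0,\omega_0,\beta_0)$ by combining the minimax characterization of eigenvalues with two one-sided estimates (a ``liminf'' and a ``limsup'' bound), both derived from the pointwise convergence of the weights via dominated convergence. The key point enabling dominated convergence is the uniform two-sided bound $a^{-1} \le \rho_\e,\omega_\e \le a$ and $|\beta_\e| \le a$, which lets us control the convergence of the bilinear forms on any fixed finite-dimensional subspace.

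First I would record the minimax formula $\lambda_j(\rho_\e,\omega_\e,\beta_\e) = \min_{V} \max_{u \in V \setminus \{0\}} R_\e(u)$, where $V$ ranges over $j$-dimensional subspaces of $H^1(\Omega)$ and $R_\e$ is the weighted Rayleigh quotient with numerator $\int_\Omega |\nabla u|^2 \rho_\e \, dx + \int_{\partial\Omega} |u|^2 \beta_\e \, dS$ and denominator $\int_\Omega |u|^2 \omega_\e \, dx$. For the \textbf{upper bound} $\limsup_{\e \to 0} \lambda_j(\rho_\e,\omega_\e,\beta_\e) \le \lambda_j(\rho_0,\omega_0,\beta_0)$: fix a subspace $V_0$ spanned by the first $j$ eigenfunctions $u_1,\dots,u_j$ of the limiting problem. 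On the finite-dimensional space $V_0$, the Rayleigh quotient $R_\e$ restricted to $V_0$ converges uniformly to $R_0$ restricted to $V_0$, because for each fixed $u \in V_0$ we have $\int_\Omega |\nabla u|^2 \rho_\e \, dx \to \int_\Omega |\nabla u|^2 \rho_0 \, dx$ and similarly for the other two integrals by dominated convergence (the integrands are dominated by $a|\nabla u|^2$, $a|u|^2$ respectively, which are integrable since $u \in H^1$), and uniformity over the unit sphere of $V_0$ follows from finite-dimensionality together with the uniform positive lower bound $\int_\Omega |u|^2 \omega_\e \, dx \ge a^{-1} \|u\|_{L^2}^2$ on the denominator. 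Hence $\max_{u \in V_0} R_\e(u) \to \max_{u \in V_0} R_0(u) = \lambda_j(\rho_0,\omega_0,\beta_0)$, and since $\lambda_j(\rho_\e,\omega_\e,\beta_\e) \le \max_{u \in V_0} R_\e(u)$, the upper bound follows.

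For the \textbf{lower bound} $\liminf_{\e \to 0} \lambda_j(\rho_\e,\omega_\e,\beta_\e) \ge \lambda_j(\rho_0,\omega_0,\beta_0)$, I would argue by contradiction (or along a subsequence realizing the liminf). Pass to a subsequence along which $\lambda_j(\rho_\e,\omega_\e,\beta_\e) \to \Lambda$ for some $\Lambda$, and for each $\e$ let $V_\e$ be the optimal $j$-dimensional subspace spanned by eigenfunctions $u_1^\e,\dots,u_j^\e$ normalized in $L^2(\omega_\e\, dx)$. From the bound $\lambda_j(\rho_\e,\omega_\e,\beta_\e) \le \Lambda + 1$ and the coercivity estimate established in the proof of \autoref{pr:spectralexistence} (numerator plus a large multiple of the $L^2$-norm dominates $\|u\|_{H^1}^2$), the eigenfunctions $u_i^\e$ are bounded in $H^1(\Omega)$; by Rellich compactness and a diagonal argument we may assume $u_i^\e \rightharpoonup u_i$ weakly in $H^1$ and strongly in $L^2(\Omega)$, hence also strongly in $L^2(\partial\Omega)$ by compactness of the trace map. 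The $L^2(\Omega)$ (and trace) convergence, combined with $\omega_\e \to \omega_0$, $\beta_\e \to \beta_0$ pointwise and boundedly, shows the limit functions $u_1,\dots,u_j$ are $L^2(\omega_0\, dx)$-orthonormal, hence span a genuine $j$-dimensional space $V_0$. Weak lower semicontinuity of the Dirichlet integral (with the technical point that $\int_\Omega |\nabla u_i^\e|^2 \rho_\e \to \int_\Omega |\nabla u_i|^2 \rho_0$ requires care: one gets $\liminf \int |\nabla u_i^\e|^2 \rho_\e \ge \int |\nabla u_i|^2 \rho_0$ using $\rho_\e \to \rho_0$ boundedly and weak lower semicontinuity — splitting $\rho_\e = \rho_0 + (\rho_\e - \rho_0)$ and noting the cross term vanishes since $|\rho_\e - \rho_0||\nabla u_i^\e|^2$ is not obviously small; the cleaner route is to test the weak form of the eigenvalue equation), together with the bounded convergence of the boundary term, gives $\max_{u \in V_0} R_0(u) \le \liminf_\e \max_{u \in V_\e} R_\e(u) = \Lambda$, so $\lambda_j(\rho_0,\omega_0,\beta_0) \le \Lambda$. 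Combining the two bounds yields the claimed convergence.

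The \textbf{main obstacle} is the lower-bound direction, specifically handling the Dirichlet term $\int_\Omega |\nabla u_i^\e|^2 \rho_\e \, dx$ under only \emph{pointwise} convergence of $\rho_\e$: weak $H^1$-convergence of $u_i^\e$ gives lower semicontinuity against a fixed weight but not automatically against a varying one. The clean way around this is to use the weak formulation of the eigenvalue problem directly: test the equation for $u_i^\e$ against $u_i^\e$ itself to get $\lambda_i^\e \int_\Omega |u_i^\e|^2 \omega_\e = \int_\Omega |\nabla u_i^\e|^2 \rho_\e + \int_{\partial\Omega} |u_i^\e|^2 \beta_\e$, and separately test against a fixed smooth $\varphi$ and pass to the limit using weak $H^1$-convergence plus bounded pointwise convergence of the weights, to identify $u_i$ as a weak solution of the limiting problem; then a Fatou-type argument on the quadratic forms, or direct identification of $\Lambda$ as an eigenvalue of the limit problem with eigenfunctions spanning a $j$-dimensional space, closes the gap. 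This is the step I would write out most carefully.
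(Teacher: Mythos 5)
Your overall structure — minimax plus a limsup bound and a liminf bound — matches the paper's, and your upper bound argument (fix the limiting eigenspace $V_0$, use dominated convergence plus finite-dimensionality to get uniform convergence of $R_\e$ to $R_0$ on the unit sphere of $V_0$) is essentially what the paper does. For the lower bound you also set things up as the paper does: bound eigenvalues, use coercivity to get $H^1$-bounded eigenfunctions, extract weak $H^1$ limits with strong $L^2(\Omega)$ and $L^2(\partial\Omega)$ convergence, and verify the limits are $L^2(\omega_0)$-orthonormal.

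Where you diverge from the paper is the one step you yourself flagged as the ``main obstacle'': lower semicontinuity of $\int_\Omega |\nabla u_\e|^2 \rho_\e \,dx$ when the weight varies. You were right to distrust the splitting $\rho_\e = \rho_0 + (\rho_\e - \rho_0)$; the remainder $\int |\nabla u_\e|^2(\rho_\e - \rho_0)\,dx$ does not go to zero from pointwise convergence alone, since $|\nabla u_\e|^2$ is merely bounded in $L^1$. Your proposed fix — test the weak eigenvalue equation against a fixed $\varphi$, pass to the limit using the strong $L^2$ convergence $\rho_\e\nabla\varphi \to \rho_0\nabla\varphi$ against the weak convergence $\nabla u_\e \rightharpoonup \nabla u$, and so identify each limit $u_i$ as a weak eigenfunction of the limiting problem with eigenvalue $\Lambda_i = \lim \lambda_i(\e) \le \Lambda$ — does work, and then $\max_{u \in \operatorname{span}(u_1,\ldots,u_j)} R_0(u) = \max_i \Lambda_i \le \Lambda$ closes the argument by minimax. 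The paper avoids passing to the limit in the PDE entirely: it proves $\int_\Omega |\nabla u|^2\rho_0\,dx \le \liminf_\e \int_\Omega |\nabla u_\e|^2 \rho_\e\,dx$ directly by writing
\[
\int_\Omega |\nabla u|^2\rho_0\,dx = \lim_\e \int_\Omega \bigl(\rho_0^{1/2}\rho_\e^{1/2}\nabla u\bigr)\cdot \nabla u_\e \,dx,
\]
which holds because $\rho_0^{1/2}\rho_\e^{1/2}\nabla u \to \rho_0\nabla u$ strongly in $L^2$ (dominated convergence, $u$ fixed) while $\nabla u_\e \rightharpoonup \nabla u$ weakly, and then applying Cauchy--Schwarz with the factorization $\rho_0^{1/2}\nabla u$, $\rho_\e^{1/2}\nabla u_\e$ and dividing through by $\bigl(\int |\nabla u|^2\rho_0\bigr)^{1/2}$. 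This weighted Cauchy--Schwarz trick is more elementary (no need to invoke the weak formulation of the eigenvalue problem or track individual $\Lambda_i$'s) and is worth learning. Your proposal is sound, but as written it leaves the central step as a sketch; you should actually carry out your ``test the weak form'' argument, including the final minimax inequality $\lambda_j(0) \le \max_i \Lambda_i$, since without that the proof is incomplete.
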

\begin{proof}
Fix $j \geq 1$, and let $\lambda_j(\e) = \lambda_j(\rho_\e,\omega_\e,\beta_\e)$. The minimax variational characterization \cite[Section 6.1]{BlanchardBruning} says  
\begin{equation} \label{eq:varchargeneral}
\lambda_j(\e) = \min_U \max_{u \in U} Q_\e(u)
\end{equation}
where $U$ ranges over $j$-dimensional subspaces of $H^1(\Omega)$ and 
\[
Q_\e(u) = \frac{\int_\Omega |\nabla u|^2 \rho_\e \, dx + \int_{\partial \Omega} |u|^2 \beta_\e \, dS}{\int_\Omega |u|^2 \omega_\e \, dx} , \qquad u \in H^1(\Omega) ,
\]
is the Rayleigh quotient.  

The maximum in the variational characterization \eqref{eq:varchargeneral} is really taken over a $(j-1)$-dimensional sphere of coefficients, because if $v_1,\dots,v_j$ is a basis for $U$ then each nonzero $u \in U$ can be written $u=t(b_1 v_1 + \dots + b_j v_j )$ with $t>0$ and $|b_1|^2 + \dots + |b_j|^2 = 1$, so that $Q_\e(u) = Q_\e(b_1 v_1 + \dots + b_j v_j)$ by homogeneity of the Rayleigh quotient. 

The minimum in \eqref{eq:varchargeneral} is attained when $U$ equals the subspace $U_\e$ spanned by the first $j$ eigenfunctions $u_{1,\e},\dots,u_{j,\e}$ corresponding to weights $\rho_\e, \omega_\e, \beta_\e$. That is, $\lambda_j(\e) = \max_{u \in U_\e} Q_\e(u)$. 

Choosing $U=U_0$ in \eqref{eq:varchargeneral} gives that $\max_{u \in U_0} Q_\e(u) \geq \lambda_j(\e)$. Hence  
\begin{equation}
\lambda_j(0) = \max_{u \in U_0} Q_0(u) \geq \limsup_{\e \to 0} \max_{u \in U_0} Q_\e(u) \geq \limsup_{\e \to 0} \lambda_j(\e) , \label{eq:limsupbound}
\end{equation}
where the first inequality holds by dominated convergence, using the boundedness of $\rho_\e, \omega_\e,\beta_\e$ together with their pointwise convergence to $\rho_0, \omega_0, \beta_0$.   

For an inequality in the reverse direction, take an arbitrary sequence of $\e$-values approaching $0$. The eigenvalues are bounded by \eqref{eq:limsupbound} and so the $L^2$-normalized eigenfunctions $u_{1,\e},\dots,u_{j,\e}$ are bounded in $H^1(\Omega)$ by coercivity of the Rayleigh quotient. Thus we may pass to a subsequence such that each eigenfunction $u_{k,\e}$ converges weakly in $H^1(\Omega)$ to some limiting function $u_k$. Then $u_{k,\e} \to u_k$ in $L^2(\Omega)$ by the compact imbedding of $H^1$ into $L^2$, and $u_{k,\e} \to u_k$ in $L^2(\partial \Omega)$ by using the trace bound \eqref{eq:traceconsequence}. 

The functions $u_k$ are orthonormal in $L^2(\omega_0 \, dx)$, since the eigenfunctions $u_{k,\e}$ are orthonormal in $L^2(\omega_\e \, dx)$:
\begin{equation}
\int_\Omega u_k u_\ell \, \omega_0 \, dx = \lim_{\e \to 0} \int_\Omega u_k u_\ell \, \omega_\e \, dx = \lim_{\e \to 0} \int_\Omega u_{k,\e} u_{\ell,\e} \, \omega_\e \, dx = \delta_{k,\ell} \label{eq:orthoeps}
\end{equation}
by dominated convergence and $L^2(\Omega)$-convergence, using the uniform bound on the $\omega_\e$ weights. Thus the subspace $U$ spanned by $\{ u_1,\dots,u_j \}$ is $j$-dimensional. Given an arbitrary $u = c_1 u_1 + \dots + c_j u_j \in U$ we let 
\[
u_\e = c_1 u_{1,\e} + \dots + c_j u_{j,\e} \in U_\e
\]
and deduce that $u_\e \rightharpoonup u$ weakly in $H^1(\Omega)$, $u_\e \to u$ in $L^2(\Omega)$ and $u_\e \to u$ in $L^2(\partial \Omega)$. Therefore
\begin{align*}
\int_\Omega |\nabla u|^2 \, \rho_0 \, dx 
& = \lim_{\e \to 0} \int_\Omega \nabla u \cdot \nabla u_\e \, \rho_0 \, dx \qquad \text{by weak convergence in $H^1$} \\
& = \lim_{\e \to 0} \int_\Omega (\rho_0^{1/2} \nabla u) \cdot (\rho_\e^{1/2}  \nabla u_\e) \, dx \\
& \hspace*{-2cm} \text{using that $\rho_\e^{1/2} \rho_0^{1/2} \nabla u \to \rho_0 \nabla u$ in $L^2(\Omega)$ by dominated convergence} \\
& \leq \left( \int_\Omega |\nabla u|^2 \, \rho_0 \, dx \right)^{\! 1/2} \liminf_{\e \to 0} \left( \int_\Omega |\nabla u_\e|^2 \, \rho_\e \, dx \right)^{\! 1/2} ,
\end{align*}
from which we conclude $\int_\Omega |\nabla u|^2 \, \rho_0 \, dx \leq \liminf_{\e \to 0} \int_\Omega |\nabla u_\e|^2 \, \rho_\e \, dx$.  Further, $\int_\Omega |u|^2 \, \omega_0 \, dx = \lim_{\e \to 0} \int_\Omega |u_\e|^2 \, \omega_\e \, dx$ by dominated convergence and $L^2$-convergence (like in \eqref{eq:orthoeps}), and $\int_{\partial \Omega} |u|^2 \, \beta_0 \, dS = \lim_{\e \to 0} \int_{\partial \Omega} |u_\e|^2 \, \beta_\e \, dS$ by reasoning similarly.  

Combining the last three observations, we find
\begin{equation}
Q_0(u) \leq \liminf_{\e \to 0} Q_\e(u_\e) . \label{eq:Q0liminf}
\end{equation}
Using $U$ in the variational characterization with $\e=0$ and then applying \eqref{eq:Q0liminf} gives that
\begin{equation}
\lambda_j(0)
\leq
\max_{u \in U} Q_0(u)
\leq 
\liminf_{\e \to 0} \max_{u \in U_\e} Q_\e(u)
=
\liminf_{\e \to 0} \lambda_j(\e) , \label{eq:liminfbound}
\end{equation}
where the $\liminf$ runs through only the subsequence of $\e$-values constructed above. The original sequence of $\e$-values was arbitrary, though, and so the last formula holds for all $\e \to 0$. 

Combining the $\limsup$ and $\liminf$ bounds in \eqref{eq:limsupbound} and \eqref{eq:liminfbound} proves the convergence of the $j$th eigenvalue, as wanted for the proposition. 
\end{proof}

\section*{Acknowledgments}
This research was supported by a grant from the Simons Foundation (\#429422 to Richard Laugesen). Alexandre Girouard acknowledges support from the Natural Sciences and Engineering Research Council of Canada (NSERC).

\nocite{*}%Show all bibtex entry. Even those not cited will be printed.

\bibliographystyle{plain}
\bibliography{biblio}

\end{document}